\tikzstyle{EDR}=[draw=lightgray,line width=0pt,preaction={clip, postaction={pattern=north east lines, pattern color=gray}}]
\tikzstyle{EDR1}=[draw=lightgray,line width=0pt,preaction={clip, postaction={pattern=north west lines, pattern color=gray}}]
\definecolor{mygray}{gray}{0.95}
\definecolor{mypink1}{rgb}{1.2,1.1,0.9}
\definecolor{mypink2}{rgb}{1.0,0.95 ,0.9}
\definecolor{mypink3}{rgb}{1.0,0.6,0.7}
\numberwithin{equation}{section}
\newtheorem{theorem}{Theorem}[section]
\newtheorem{lemma}[theorem]{Lemma}
\newtheorem{remark}[theorem]{Remark}
\numberwithin{equation}{section}
\newcommand{\beq}{\begin{equation}}
	\newcommand{\eeq}{\end{equation}}
\newcommand{\beqq}{\begin{equation*}}
	\newcommand{\eeqq}{\end{equation*}}
\newcommand{\ben}{\begin{eqnarray}}
	\newcommand{\een}{\end{eqnarray}}
\newcommand{\beno}{\begin{eqnarray*}}
	\newcommand{\eeno}{\end{eqnarray*}}
\begin{document}
%Insert here the title, affiliations and abstract:
\setcounter{page}{1}
\title[A noncommutative maximal ergodic inequality for arithmetic sets ]
{A noncommutative maximal inequality for ergodic averages along arithmetic sets}

\author[]{Cheng Chen}
\address{
Department of Mathematics\\
Sun Yat-sen(Zhongshan) University\\
Guangzhou 510275 \\
China}

\email{chench575@mail.sysu.edu.cn}

\author[]{Guixiang Hong}
\address{
Institute for Advanced Study in Mathematics\\
Harbin Institute of Technology\\
Harbin
150001\\
China}
\email{gxhong@hit.edu.cn}

%----------Author 2
\author[]{Liang Wang}
\address{
School of Mathematics and Statistics\\
Wuhan University\\
Wuhan 430072\\
China}

\email{wlmath@whu.edu.cn}

\thanks{}

\subjclass[2010]{Primary  46L51; Secondary 42B20}
\keywords{Noncommutative $L_p$ spaces, Noncommutative maximal ergodic inequality, Discrete Fourier analysis, Noncommutative sampling principle}

\date{\today}
\begin{abstract}
In this paper, we establish a noncommutative maximal inequality for ergodic averages with respect to the set $\{k^t|k=1,2,3,...\}$ acting on noncommutative $L_p$ spaces for $p>\frac{\sqrt{5}+1}{2}$.
\end{abstract}
\maketitle
%\newline \indent $^{*}$Corresponding author%

\section{Introduction}

Let $(\Omega, \mu, T)$ be a dynamical system, where $\mu$ denotes a probability measure, $T$ stands for an invertible measure-preserving transformation.
The Birkhoff ergodic theorem indicates that for any $f\in L_1(\Omega)$, the ergodic average
\begin{equation*}
  A_Nf:=\frac{1}{N}\sum_{k=1}^{N}T^{k}f
\end{equation*}
converges almost everywhere as $N\rightarrow\infty$. It is commonly recognized that in numerous instances, establishing a maximal ergodic inequality suffices to derive a pointwise ergodic theorem. For instance, Wiener's  weak $(1,1)$ type estimate of the maximal operator associated to the time averages, as detailed in \cite{Wi}, could deduce the Birkhoff ergodic theorem.  In classical ergodic theory, the Birkhoff ergodic theorem, together with this strategy, was surrounded by numerous significant advancements. For instance,  Dunford and Schwartz \cite{DS} established the strong $(p,p)$ maximal inequalities for all $1<p<\infty$ in the context of the ergodic average associated with a positive $L_1$-$L_\infty$ contraction $T$. Subsequently, Akcoglu \cite{Ak} demonstrated that the validity of these strong $(p,p)$  maximal inequalities can be assured even when $T$ is merely a positive $L_p$ contraction. These results, viewed as ergodic theorems associated to the action of the group $\mathbb Z$, have been  extended to  a considerably large class of groups. For instance, after a series of progress made by Calder\'on, Wiener, Gromov among others, Lindenstrauss \cite{Li} further generalized the Birkhoff ergodic theorem to arbitrary amenable groups, broadening its scope and applicability. See the survey \cite{Nev06} by Nevo for ergodic theory associated to group actions. Motivated partially by the generalized uniform distribution problem, in his celebrated works \cite{Bo881,Bo882,Bo89}, Bourgain established the pointwise ergodic theorem for certain subset of integers by demonstrating the $L_p$-boundedness of maximal function for $p>1$. Specifically, Bourgain's results indicate that for any $f\in L_p(\Omega)$ with $p>1$, the ergodic average
\begin{equation*}
 A_Nf:=\frac{1}{N}\sum_{k=1}^NT^{k^t}f
\end{equation*}
converges almost everywhere as $N\rightarrow\infty$, and the maximal function $\sup_{N\geq1}|A_Nf|$ is $L_p$-bounded. Here $t\in\mathbb{N}^*$. Bourgain's work has been playing a central role in the development of the so-called discrete Fourier analysis, and one can find more information in the survey \cite{FITW20}[Sections II.1 and II. 3] and a recent work \cite{KMT22} as well as the references therein.

 On the other hand,  motivated by the quantum mechanics, operator algebra, noncommutative geometry, and noncommutative martingale theory, there appear more and more research papers on ergodic theory in the noncommutative setting. In the nascent stages of noncommutative ergodic theory, the focus was primarily on mean ergodic theorems. It was Lance's groundbreaking work \cite{La} that ignited the study of individual ergodic theorems, establishing noncommutative generalizations of the Birkhoff ergodic theorem for bounded functions. This topic then gained significant momentum, leading to a series of influential studies (see e.g. \cite{CoN,Ku}). Notably, after Yeadon's weak $(1,1)$ type maximal ergodic inequality \cite{Ye}, Junge and Xu \cite{JX07} obtained the full version of noncommutative analogue of the Dunford-Schwartz ergodic theorem. Their groundbreaking work has since inspired numerous subsequent studies on noncommutative ergodic theorems, see for instance \cite{CAD,Be,HRW,Hu,LS, HoSu18, LeXu12} and references therein. Beyond the class of Dunford-Schwartz operators explored by Junge and Xu, Hong \cite{Ho} studied the noncommutative ergodic averages of balls and spheres over Euclidean spaces. Furthermore, Hong, Liao and Wang \cite{HLW} obtained  noncommutative extensions of the Calder\'{o}n pointwise ergodic theorem for groups of polynomial growth. Recently, Cadilhac and Wang \cite{CW} established the maximal ergodic inequality for ergodic averages of amenable groups, marking a noncommutative analogue of the Lindenstrauss ergodic theorem. Despite these advancements, to the best of our knowledge, there remains a gap in the literature regarding a noncommutative analogue of Bourgain's ergodic theorem presented in \cite{Bo881,Bo882,Bo89}. In this paper, we delve into this unexplored territory, aiming to bridge this gap and contribute to the understanding of noncommutative ergodic theory by exploring the noncommutative analogue of Bourgain's seminal results.

Let $\mathcal{N}$ be a von Neumann algebra with a normal faithful semifinite trace $\tau$, and $\gamma$ be an automorphism on $\mathcal{N}$ satisfying $\tau=\tau\circ\gamma$.  The noncommutative $L_p$ spaces associated to $(\mathcal N,\tau)$ is denoted by $L_p(\mathcal N)$, then it is well-known that $\gamma$ extends to an isometry on $L_p(\mathcal N)$ for all $1\leq p\leq\infty$. We introduce the noncommutative
ergodic averages associated with $\gamma$ on $L_p(\mathcal{N})$ as follows,
\begin{equation}\label{ea1}
  A_Nx:=\frac{1}{N}\sum_{k=1}^N\gamma^{k^t}(x),\ x\in\mathcal{N},
\end{equation}
where $t\in\mathbb{N}^*$. Then we have the following noncommutative maximal ergodic inequality.

\begin{theorem}\label{thm1}
  For $p>\frac{\sqrt{5}+1}{2}$, the following inequality
  \begin{equation}\label{mthm1}
    \Big\|{\sup_{N}}^+A_N(x)\Big\|_{L_p(\mathcal{N})}\leq C_p\|x\|_{L_p(\mathcal{N})}
  \end{equation}
  holds for all $x\in{L_p(\mathcal{N})}$.
\end{theorem}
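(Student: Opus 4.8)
The plan is to follow the architecture of Bourgain's circle–method proof, but to organize it so that every genuinely noncommutative difficulty is concentrated in a single Fourier–multiplier estimate on a finite range of scales. \emph{Step 1 (transference and discretization).} Using $\tau=\tau\circ\gamma$ and a noncommutative Calderón transference principle, one reduces \eqref{mthm1} to the corresponding maximal inequality for the convolution operators $f\mapsto \mu_N * f$ on $\ell_p(\Z;L_p(\mathcal{M}))$, where $\mu_N=\frac1N\sum_{k=1}^N\delta_{k^t}$ and $\mathcal{M}$ is an auxiliary finite von Neumann algebra, the supremum being understood in the sense of the Junge--Xu $L_p(\mathcal{M}\,\overline{\otimes}\,\ell_\infty(\Z);\ell_\infty)$ norm. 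A standard short–variation ($V^2$) decomposition then splits $\sup_N$ into the lacunary family $\{\mu_{2^n}\}_n$ and, for each dyadic block, the oscillation $\sup_{2^n\le N<2^{n+1}}|\mu_N-\mu_{2^n}|$; within a block the averages differ by a small, smoothing kernel, which one controls by an $\ell_2$–valued square–function argument.

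\emph{Step 2 (circle method).} For the lacunary averages expand the symbol $\widehat{\mu_{2^n}}(\theta)=\frac1{2^n}\sum_{k\le 2^n}e^{2\pi i k^t\theta}$ into major–arc contributions plus a minor–arc error: $\widehat{\mu_{2^n}}=\sum_{0\le s\le n} m_{n,s}+E_n$, where $m_{n,s}(\theta)=\sum_{q\sim 2^s}\sum_{(a,q)=1}\frac{S(a,q)}{q}\,\chi_s(\theta-\tfrac aq)\,\Psi_{2^n}(\theta-\tfrac aq)$ is modelled on the product of a normalized complete Weyl sum $S(a,q)=q^{-1}\sum_{r=1}^q e^{2\pi i ar^t/q}$, a fixed smooth bump $\chi_s$ of width $2^{-s}$ centered at $a/q$, and the continuous symbol $\Psi_{N}(\beta)=\int_0^1 e^{2\pi i N^t\beta u^t}\,du$. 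Weyl's inequality gives $\|E_n\|_\infty\lesssim 2^{-\delta n}$ on the relevant minor arcs for some $\delta=\delta(t)>0$; combined with Plancherel on $L_2(\mathcal{M}\,\overline{\otimes}\,\ell_2(\Z))$ and a crude endpoint bound, interpolation yields an error that is summable in $n$ in the asserted range of $p$.

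\emph{Step 3 (sampling principle and the continuous model).} The heart of the matter is the family $T_{n,s}$ with symbol $m_{n,s}$. A noncommutative sampling principle — the operator–valued analogue of Magyar--Stein--Wainger sampling, established with a constant polynomial in $2^s$ — factors $T_{n,s}$, up to an acceptable error, as the composition of a purely arithmetic multiplier $\Lambda_s$ on $\ell_p(\Z;L_p(\mathcal{M}))$, with symbol $\sum_{q\sim 2^s}\sum_{(a,q)=1}\frac{S(a,q)}{q}\chi_s(\theta-\tfrac aq)$, and a sampled copy of the continuous averaging operator with symbol $\Psi_{2^n}$ on $L_p(\R;L_p(\mathcal{M}))$. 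The continuous maximal operator is dominated pointwise by the Hardy--Littlewood maximal operator, since the pushforward of Lebesgue measure on $[0,R]$ under $u\mapsto u^t$ has a decreasing, integrable majorant and hence is a mixture of normalized intervals; by the Dunford--Schwartz maximal inequality of Junge--Xu and Hong's treatment of Euclidean averages \cite{Ho} it is therefore bounded on $L_p(\mathcal{M}\,\overline{\otimes}\,L_\infty(\R);\ell_\infty)$ for all $p>1$. For the arithmetic piece, exact orthogonality of the bumps $\chi_s(\cdot-a/q)$ together with $|S(a,q)|\lesssim_\varepsilon q^{-1/t+\varepsilon}$ gives $\|\Lambda_s\|_{L_2\to L_2}\lesssim 2^{-\delta' s}$, and a trivial $O(2^{s})$–type bound at the endpoints; interpolating these makes $\sum_s\|\Lambda_s\|$ finite. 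Running this for the supremum rather than a fixed $N$ requires maximal ($\ell_\infty$–valued) refinements of the sampling principle and of the two multiplier bounds.

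\emph{Main obstacle.} The restriction $p>\tfrac{\sqrt5+1}{2}$, equivalently $p(p-1)>1$, is forced at exactly the two interpolation steps above: the square–function/duality step that controls the lacunary maximal function by the continuous one, and the summation over denominator scales $s$ in Step 3. In the commutative theory one instead invokes Ionescu--Wainger multiplier theory, a consequence of Calderón--Zygmund theory for the arithmetic multipliers, which yields the full range $p>1$; no such tool is available on $L_p(\mathcal{N})$, so one is reduced to interpolating the clean Hilbert–space ($L_2$) estimate against the only endpoint bounds at hand, and the quadratic constraint is precisely the break–even point of that interpolation. Thus the decisive technical point — the one I expect to be hardest — is proving the noncommutative sampling principle and the $L_p$–boundedness of the single–scale arithmetic multipliers $\Lambda_s$ with norms decaying in $s$, together with the maximal refinements needed to pass from a fixed average to the full supremum.
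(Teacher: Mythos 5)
Your overall architecture — transference to the shift model on $\mathbb{Z}$, a circle-method decomposition of $\widehat{K_N}$, and a noncommutative Magyar--Stein--Wainger sampling principle to move major-arc multipliers between $\mathbb{R}$ and $\mathbb{Z}$ — matches the paper's. You have also correctly identified the sampling principle as the heart of the noncommutative difficulty, and the restriction $p(p-1)>1$ as coming from interpolation against a weak endpoint. However, there are two substantive points where your plan diverges in a way that would cause trouble.

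First, the reduction to lacunary scales. You propose a short-variation ($V^2$) decomposition controlled by a square function. This is both harder than needed and, in the noncommutative setting, not an available black box: column and row square functions diverge, and a $V^2$ inequality for these convolution averages is not in the literature. The paper instead observes that for positive $f$ and $N_1<N_2$ one has the operator inequality $\mathcal{A}_{N_1}f \le (N_2/N_1)\,\mathcal{A}_{N_2}f$ coming from the positivity of the kernels $K_N$, which immediately gives $\sup_{2^k\le N<2^{k+1}}\mathcal{A}_N f\le 2\,\mathcal{A}_{2^{k+1}}f$ and so reduces to the lacunary family for free, even in $L_p(\mathcal{M};\ell_\infty)$.

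Second, and more seriously, your major-arc decomposition is organized by $q\sim 2^s$, $(a,q)=1$, i.e.\ the Farey-fraction scheme of \cite{Bo89}. You then claim that a sampling principle factors $T_{n,s}$ through a ``purely arithmetic multiplier'' $\Lambda_s$ times a sampled continuous operator. But the noncommutative sampling principle (Lemma \ref{sampling2} of the paper) works by composing the isomorphism $\mathbb{Z}\cong q\mathbb{Z}\times \mathbb{Z}/q\mathbb{Z}$ with a $q$-dilation; it therefore treats multipliers supported near a \emph{full} cyclic subgroup $\{a/q: 0\le a<q\}$ all at once. It does \emph{not} directly apply to the union over $q\sim 2^s$ of the reduced fractions $(a,q)=1$, because those are not a finite union of cyclic subgroups of $\mathbb{T}$. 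In the commutative case one bridges exactly this gap with Ionescu--Wainger theory, which you yourself note is unavailable in $L_p(\mathcal{N})$. The paper avoids this by following \cite{Bo882}: the major arcs are indexed by the sets $\mathfrak{R}_s$, built from products of large primes $Q_{s,m}$, so that each $\mathfrak{R}_s$ is a disjoint union of at most $2^{\rho s}$ \emph{differences of cyclic subgroups} $\mathbb{Z}_{Q_{s,1}Q_{s,m}}\setminus \mathbb{Z}_{Q_{s,1}}$; each such piece is handled by two applications of the sampling Lemma \ref{sampling} (one for each cyclic subgroup) followed by the triangle inequality, and $|S(\xi)|\lesssim 2^{-(1+\rho)s/2}$ off $\mathfrak{R}_s$ provides the $L_2$ gain. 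It is the free parameter $\rho$ that gets optimized to produce $p>\frac{1+\sqrt{5}}{2}$ in \eqref{finaleq}; your scheme has no analogous parameter, and you do not verify that your interpolation lands on the same threshold. You should replace your Step 2 decomposition with this $\mathfrak{R}_s$ structure (or supply a genuinely new substitute for Ionescu--Wainger in the noncommutative setting) before the argument closes.

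A minor point: the continuous maximal bound for the kernel $k(x)=t^{-1}x^{1/t-1}\chi_{[0,1]}(x)$ follows from the noncommutative Hardy--Littlewood maximal inequality (as in \cite{Mei09}), not from the Dunford--Schwartz theorem of Junge--Xu as you suggest; the latter is for one-parameter semigroups, while here one needs the operator-valued HL maximal function after writing $k$ as a superposition of dyadic averages.
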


We refer the reader to the next section for the precise definition of the noncommutative maximal norm. Theorem \ref{thm1} extends Bourgain's result in \cite{Bo882} to the noncommutative setting. Extending these results to the noncommutative setting has been a challenging task, due to the inherent complexities and subtleties associated with noncommutative algebras.
Our approach to overcoming these difficulties was to establish a noncommutative analogue of the sampling principle, as outlined in \cite{MSW}. This principle allows us to adapt Bourgain's method to the noncommutative case, the successful application of this method not only validates the strength and generality of Bourgain's approach but also opens up new avenues for exploring ergodic theory in noncommutative settings.

\begin{remark}
 \rm{Up to now, due to the noncommutativity, it remains an outstanding challenge to obtain Theorem \ref{thm1} for all $p>1$ as in Bourgain's work \cite{Bo89}. The noncommutative framework introduces complexities that have hindered generalizations to the full range of $p$ values.

On the other hand, Bourgain's groundbreaking approach in \cite{Bo881, Bo882, Bo89} successfully utilized the quantitative maximal ergodic inequality to derive the pointwise ergodic theorem. This method circumvented the critical obstacle posed by the absence of a natural dense set of functions for which pointwise convergence holds. However, in the noncommutative setting, Bourgain's method faces a new hurdle: the need to tackle a distribution inequality. In the commutative case, distribution inequalities are well-understood and can be effectively employed. But in the noncommutative realm, dealing with such inequalities appears to be an extremely daunting task. The noncommutative structure introduces additional complexities that make it challenging to adapt Bourgain's method directly.

Consequently, obtaining a noncommutative pointwise ergodic theorem through the noncommutative maximal ergodic inequality presented in our paper remains an open problem. Despite the progress made in Theorem \ref{thm1}, further research is needed to bridge this gap and extend Bourgain's pointwise ergodic result to the noncommutative setting.}
%Such an achievement would be a significant milestone in noncommutative ergodic theory, with potential implications for various areas of mathematics and physics where noncommutative structures play a crucial role.
\end{remark}

The remainder of this paper is structured as follows. In Section \ref{pre}, we provide a review of noncommutative $L_p$ spaces, the noncommutative $\ell_\infty$-valued $L_p$ spaces, and some auxiliary results from \cite{Bo882}. This section serves as a foundation for the subsequent discussions, establishing notation and terminology used throughout the paper.

In Section \ref{sam}, we delve into the noncommutative realm, proving a noncommutative sampling principle (see Lemma \ref{sampling2}). This principle is a crucial component in our analysis, enabling us to extend Bourgain's ideas to the noncommutative setting.

Finally, in Section \ref{last}, we present the complete proof of Theorem \ref{thm1}. This theorem establishes the noncommutative maximal ergodic inequality, which is the main result of this paper. By leveraging the noncommutative sampling principle and other techniques, we demonstrate the validity of this inequality within the specified range of $p$.

 \textbf{Notation:}  In the subsequent discussion, $L_\infty(\Omega)\overline{\otimes}{\mathcal{M}}$  represents the tensor von Neumann algebra associated with a measurable space $(\Omega,\mu)$ and a semifinite von Neumann algebra $(\mathcal{M},\tau)$. The notation $A\lesssim B$ is used to indicate that there exists a positive constant $C$ such that $A\le C B$, and $A\sim B$ means that both $A\lesssim B$ and $B \lesssim A$ hold.
 Throughout the whole article, we denote by $C$ a positive constant which is independent of the main parameters, but it may
vary from line to line. For a given function $f$, $\Re f$ denotes the real part of $f$  while  $\Im f$ represents the imaginary part.
\section{Preliminaries}\label{pre}

\subsection{Noncommutative $L_p$ spaces}
 Let $(\mathcal{M},\tau)$ be a semifinite von Neumann algebra endowed with a normal semifinite faithful trace $\tau$.
 $\mathcal{S}(\mathcal{M})_+$ denotes the set of all $x\in\mathcal{M}_+$ such that $\tau(\mathrm{supp}(x))<\infty$, where $\mathrm{supp}(x)$ is the support of $x$. The linear span of $\mathcal{S}(\mathcal{M})_+$ is denoted by $\mathcal{S}(\mathcal{M})$ and constitutes  a weak*- dense *-subalgebra of $\mathcal{M}$. For $1\leq p<\infty$, the norm $\|\cdot\|_p$ is defined as \beqq \|x\|_p=(\tau(|x|^p))^{1/p},\quad x\in\mathcal{S}(\mathcal{M}),\eeqq
where $|x|=(x^*x)^{1/2}$ is the modulus of $x$.  This norm induces a completion of $\mathcal{S}(\mathcal{M})$ known as the noncommutative $L_p$ space associated to $(\mathcal{M},\tau)$, denoted by  $L_p(\mathcal{M})$.
As usual, we set $L_\infty(\mathcal{M})=\mathcal{M}$, and $\|x\|_\infty:=\|x\|_\mathcal{M}$.
For more information on noncommutative $L_p$ spaces, we refer to \cite{PX03}.

Frequently, we will make use of the operator-valued version of the H\"{o}lder inequality.
 \begin{lemma}\label{convex1}
Suppose that $f$ is a  $\mathcal{M}_+$-valued function on a measurable space $(\Sigma,\mu)$  and $g$ is a positive integrable function. Then for $1<p<\infty$, we have the following operator inequality,
 \begin{equation*}
  \int_{\Sigma}f(x)g(x)d\mu(x)\leq\Big(\int_{\Sigma}|g(x)|^{p^\prime}d\mu(x)\Big)^{1/{p^\prime}}\cdot\Big(\int_{\Sigma}|f(x)|^pd\mu(x)\Big)^{1/p},
 \end{equation*}
 whenever all the terms above make sense. Here, $p^\prime$ denotes the conjugate index of $p$, i.e. $\frac{1}{p}+\frac{1}{p^\prime}=1$.
 \end{lemma}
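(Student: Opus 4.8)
The plan is to prove the inequality by reducing it to the case of a probability measure and then invoking the operator Jensen inequality together with the operator concavity of $t\mapsto t^{1/p}$.

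First I would dispose of the degenerate cases. If $\int_\Sigma g^{p'}\,d\mu=0$ then $g=0$ $\mu$-a.e. and the left-hand side vanishes; and since $\int_{\{g=0\}}|f|^p\,d\mu\ge 0$ while $t\mapsto t^{1/p}$ is operator monotone, I may also discard the set $\{g=0\}$. So I assume $g>0$ a.e. and $0<c:=\int_\Sigma g^{p'}\,d\mu<\infty$. Put $d\nu:=c^{-1}g^{p'}\,d\mu$, which is a probability measure, and set $\tilde f:=g^{-1/(p-1)}f$, an $\mathcal M_+$-valued function. Using $1-p'=-\tfrac{1}{p-1}$ and $\tfrac{p}{p-1}=p'$, a direct substitution ($d\mu=c\,g^{-p'}\,d\nu$) gives
\[
\int_\Sigma g f\,d\mu \;=\; c\int_\Sigma \tilde f\,d\nu,\qquad
\int_\Sigma |f|^p\,d\mu \;=\; c\int_\Sigma \tilde f^{\,p}\,d\nu,
\]
hence $\big(\int_\Sigma |f|^p\,d\mu\big)^{1/p}=c^{1/p}\big(\int_\Sigma \tilde f^{\,p}\,d\nu\big)^{1/p}$. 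Since $c\cdot c^{-1/p}=c^{1/p'}=\big(\int_\Sigma |g|^{p'}\,d\mu\big)^{1/p'}$, the asserted inequality is now \emph{equivalent} to the probability-measure statement $\int_\Sigma \tilde f\,d\nu\le\big(\int_\Sigma \tilde f^{\,p}\,d\nu\big)^{1/p}$.

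Next I would establish this last inequality. The map $\Phi(F):=\int_\Sigma F\,d\nu$ is a unital positive linear map (a state-valued "conditional expectation"), and $h(t)=t^{1/p}$ is operator concave on $[0,\infty)$ for every $p>1$ (Löwner--Heinz). The operator Jensen inequality of Davis and Choi therefore gives $\Phi\big(h(F)\big)\le h\big(\Phi(F)\big)$; applying this to $F=\tilde f^{\,p}$ and using $(\tilde f^{\,p})^{1/p}=\tilde f$ since $\tilde f\ge 0$ yields exactly $\int_\Sigma\tilde f\,d\nu\le\big(\int_\Sigma\tilde f^{\,p}\,d\nu\big)^{1/p}$. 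Combined with the reduction of the previous paragraph, this proves the lemma. (Note that the route through operator convexity of $t\mapsto t^p$ would only work for $1<p\le 2$; it is the operator concavity of $t\mapsto t^{1/p}$, valid for all $p>1$, that gives the full range.)

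The only point demanding care is the integral form of the operator Jensen step: $\Phi$ is a priori defined on bounded, integrable operator fields, whereas $\tilde f^{\,p}$ and the integrals in the statement may be unbounded operators affiliated to $\mathcal N$. I would handle this by truncation --- replacing $f$ by $f\,\mathbf 1_{\{\|f\|\le R,\;R^{-1}\le g\le R\}}$ so that everything lies in $\mathcal M$ and all integrals are genuine weak-$*$ integrals, applying the bounded case, and then letting $R\to\infty$, using normality and operator monotonicity of $t\mapsto t^{1/p}$. If one prefers an entirely elementary argument, the finitely-supported case of Davis' inequality ($h(\sum_i\lambda_iA_i)\ge\sum_i\lambda_i h(A_i)$ for a convex combination, which is immediate from operator concavity) together with an approximation of $\nu$ by finitely supported probability measures gives $\int_\Sigma \tilde f\,d\nu\le(\int_\Sigma \tilde f^{\,p}\,d\nu)^{1/p}$ directly. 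Apart from this bookkeeping, the proof consists of the substitution and the single invocation of operator concavity displayed above.
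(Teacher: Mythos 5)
The paper offers no proof of this lemma: it is stated as a known fact and attributed to \cite[Lemma 2.4]{HLSX}, so there is no in-paper argument to compare yours against. Your proof is correct and is essentially the standard way to establish this operator H\"older inequality. The reduction is clean: after discarding $\{g=0\}$ (legitimate, since dropping it only shrinks $\int f^p\,d\mu$ and $t\mapsto t^{1/p}$ is operator monotone), the change of measure $d\nu=c^{-1}g^{p'}d\mu$ and the substitution $\tilde f=g^{-1/(p-1)}f$ convert the claim into $\int\tilde f\,d\nu\le(\int\tilde f^{\,p}d\nu)^{1/p}$ for a probability measure $\nu$; here it is essential, and you use it correctly, that $g$ is \emph{scalar}-valued, so that $\tilde f$ is again $\mathcal M_+$-valued and $(\lambda a)^p=\lambda^p a^p$. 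The final step is the Choi--Davis--Hansen--Pedersen Jensen inequality for the unital positive map $F\mapsto\int F\,d\nu$ applied to the operator concave function $t\mapsto t^{1/p}$ (L\"owner--Heinz), and your parenthetical remark is the right one: the route through operator convexity of $t^p$ would cap out at $p\le 2$, and a naive duality argument (testing against states) fails because scalar Jensen for $t^{1/p}$ points the wrong way, so the genuinely operator-theoretic Jensen step is unavoidable. The truncation bookkeeping you describe for unbounded $f$ and for $g$ near $0$ is the correct way to pass from the bounded case to the general one via normality and operator monotonicity. In short: a complete, self-contained proof of a statement the paper only cites.
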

 This inequality is well-known in noncommutative analysis and can be found, for instance, in \cite[Lemma 2.4]{HLSX}.
\subsection{Noncommutative $\ell_\infty$-valued $L_p$ spaces}

Given $1\leq p\leq\infty$ and an index set $\Lambda$, the space $L_p(\mathcal{M};\ell_\infty(\Lambda))$ is the space of all $x=(x_\lambda)_{\lambda\in\Lambda}$ in $L_p(\mathcal{M})$ that can be factorized as:
\begin{equation*}
  x_\lambda=ay_\lambda b,\ \lambda\in\Lambda,
\end{equation*}
where $a,\ b\in L_{2p}(\mathcal{M}),\ y_\lambda\in L_\infty(\mathcal{M})$ and $ \ \sup_{\lambda\in\Lambda}\|y_\lambda\|_\infty<\infty$.
The norm in $L_p(\mathcal{M};\ell_\infty(\Lambda))$ is defined by
\begin{equation*}
  \| x\|_{L_p(\mathcal{M};\ell_\infty(\Lambda))}:=\inf_{x_\lambda=ay_\lambda b}\Big\{\|a\|_{2p}\sup_{\lambda\in\Lambda}\|y_\lambda\|_\infty\|b\|_{2p}\Big\}.
\end{equation*}
and this norm is commonly denoted by $\|{\sup_{\lambda\in\Lambda}}^+ x_\lambda\|_p$.
It was shown in \cite{JX07} that $x\in L_p(\mathcal{M};\ell_\infty(\Lambda))$ if and only if
\begin{equation*}
  \sup\Big\{\Big\|{\sup_{\lambda\in I}}^+ \ x_\lambda\Big\|_p\ :\  I\ \text{ is a finite subset of}\ \Lambda\Big\}<\infty.
\end{equation*}
In fact, $\|{\sup_{\lambda\in\Lambda}}^+ x_\lambda\|_p$ is equal to this supremum. Moreover, when $x=(x_\lambda)_{\lambda\in\Lambda}$ is a sequence of self-adjoint operators, then $x\in L_p(\mathcal{M};\ell_\infty(\Lambda))$ if and only if there exists a positive element $a\in L_p(\mathcal{M})$ such that for all $\lambda\in\Lambda$, $-a\leq x_\lambda\leq a$, and
\begin{equation*}
 \Big\|{\sup_{\lambda\in\Lambda}}^+\ x_\lambda\Big\|_p=\inf\Big\{\|a\|_p:\ a\in L_p(\mathcal{M})_+, -a\leq x_\lambda\leq a,\ \forall \lambda\in\Lambda\Big\}.
\end{equation*}
For brevity, when no confusion arises, we denote the space $L_p(\mathcal{M};\ell_\infty(\Lambda))$ simply by $L_p(\mathcal{M};\ell_\infty)$.

 Additionally, the following useful interpolation result can be found in \cite{JX07}.
\begin{lemma}\label{max interpolation}
 Suppose $1\leq p_0,p_1\leq\infty$ , $0<\theta<1$. Let $\frac{1}{p}=\frac{1-\theta}{p_0}+\frac{\theta}{p_1}$, then we have
      \begin{equation*}
        L_p(\mathcal{M};\ell_\infty)=\big(L_{p_0}(\mathcal{M};\ell_\infty), L_{p_1}(\mathcal{M};\ell_\infty)\big)_\theta.
      \end{equation*}
\end{lemma}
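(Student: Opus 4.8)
The plan is to prove the identity by establishing the two continuous inclusions
\[
\bigl(L_{p_0}(\mathcal{M};\ell_\infty),L_{p_1}(\mathcal{M};\ell_\infty)\bigr)_\theta\ \hookrightarrow\ L_p(\mathcal{M};\ell_\infty)\ \hookrightarrow\ \bigl(L_{p_0}(\mathcal{M};\ell_\infty),L_{p_1}(\mathcal{M};\ell_\infty)\bigr)_\theta
\]
with norm control, working throughout with the factorization description of the $\ell_\infty$-valued norm ($x_\lambda=ay_\lambda b$ with $a,b\in L_{2p}(\mathcal{M})$, $y_\lambda\in\mathcal{M}$, $\sup_\lambda\|y_\lambda\|_\infty<\infty$). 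Since $\|{\sup_{\lambda\in\Lambda}}^+x_\lambda\|_p$ equals the supremum of $\|{\sup_{\lambda\in I}}^+x_\lambda\|_p$ over finite $I\subseteq\Lambda$, I would first prove the identity for finite index sets, with constants independent of $|\Lambda|$, and then pass to general $\Lambda$ by a routine limiting argument; so $\Lambda$ may be taken finite below, and note that in every non-degenerate case $p\in(1,\infty)$ (the case $p_0=p_1$ being trivial).

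For the inclusion $L_p(\mathcal{M};\ell_\infty)\hookrightarrow(\,\cdot\,)_\theta$ I would apply the classical complex-interpolation device to the outer factors. Given $x=(x_\lambda)$ and a near-optimal factorization $x_\lambda=ay_\lambda b$, use polar decompositions $a=u|a|$, $b=|b^{*}|w$, put $\eta(z)=2p\bigl(\tfrac{1-z}{2p_0}+\tfrac{z}{2p_1}\bigr)$, $a(z)=u\,|a|^{\eta(z)}$, $b(z)=|b^{*}|^{\eta(z)}\,w$ (one first assumes $a,b\in\mathcal{S}(\mathcal{M})$, removing this by density), and consider $F(z)=\bigl(a(z)\,y_\lambda\,b(z)\bigr)_{\lambda}$, which is bounded and analytic on the strip with $F(\theta)=x$. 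Since $a(j+it)\,y_\lambda\,b(j+it)$ is a legitimate factorization of $F(j+it)$, the definition of the $\ell_\infty$-valued norm together with $\|a(j+it)\|_{2p_j}=\|a\|_{2p}^{p/p_j}$ and $\|b(j+it)\|_{2p_j}=\|b\|_{2p}^{p/p_j}$ gives $\|F(j+it)\|_{L_{p_j}(\mathcal{M};\ell_\infty)}\le\|a\|_{2p}^{p/p_j}\,\sup_\lambda\|y_\lambda\|_\infty\,\|b\|_{2p}^{p/p_j}$ for $j=0,1$; and since $\tfrac{p}{p_0}(1-\theta)+\tfrac{p}{p_1}\theta=1$, the three-lines inequality yields $\|x\|_{(\,\cdot\,)_\theta}\le\|a\|_{2p}\sup_\lambda\|y_\lambda\|_\infty\|b\|_{2p}$. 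Infimizing over factorizations gives this inclusion with constant $1$, for all $1\le p_0,p_1\le\infty$.

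The reverse inclusion $(\,\cdot\,)_\theta\hookrightarrow L_p(\mathcal{M};\ell_\infty)$ is the heart of the matter and, I expect, the main obstacle: an interpolating function $F$ for $x$ supplies, on each boundary line $\Re z=j$, a factorization of $F(j+it)$ whose factors depend on $t$ merely measurably, so one cannot read off a factorization of $x=F(\theta)$ directly (a genuinely analytic factorization would demand an outer-function / analytic change-of-density construction). I would instead argue by duality. Writing $X_j=L_{p_j}(\mathcal{M};\ell_\infty)$, so that $X_j^{*}=L_{p_j'}(\mathcal{M};\ell_1)$ and $\tfrac1{p'}=\tfrac{1-\theta}{p_0'}+\tfrac{\theta}{p_1'}$, combine the Junge--Xu duality $L_{p'}(\mathcal{M};\ell_1(\Lambda))^{*}=L_p(\mathcal{M};\ell_\infty(\Lambda))$ (see \cite{JX07}) with the canonical contractive embedding $\bigl(X_0^{*},X_1^{*}\bigr)_\theta\hookrightarrow\bigl(X_0,X_1\bigr)_\theta^{*}$: for $\xi\in(\,\cdot\,)_\theta$ and a test functional $\phi$ one gets $|\langle\xi,\phi\rangle|\le\|\xi\|_{(\,\cdot\,)_\theta}\,\|\phi\|_{(L_{p_0'}(\mathcal{M};\ell_1),\,L_{p_1'}(\mathcal{M};\ell_1))_\theta}$, so it suffices to bound the last norm by $\|\phi\|_{L_{p'}(\mathcal{M};\ell_1)}$. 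That is exactly the \emph{easy} half of the $\ell_1$-valued interpolation identity, which one proves by the same factorization-plus-three-lines scheme as above, now applied to the column and row factors of an element of $L_{p'}(\mathcal{M};\ell_1)$; the one genuinely external ingredient here is the interpolation of the column and row scales $L_r(\mathcal{M};\ell_2^{c})$ and $L_r(\mathcal{M};\ell_2^{r})$, which is standard. Taking the supremum over such $\phi$ then shows $\xi\in L_p(\mathcal{M};\ell_\infty)$ with $\|\xi\|_{L_p(\mathcal{M};\ell_\infty)}\le\|\xi\|_{(\,\cdot\,)_\theta}$, closing the proof. Everything else — the reduction to finite $\Lambda$, the three-lines computations, and the endpoint/degenerate bookkeeping — is routine.
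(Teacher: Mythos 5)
The paper does not prove this lemma at all: it is stated with the single line ``the following useful interpolation result can be found in \cite{JX07},'' so the benchmark you are being compared against is simply the proof in Junge--Xu, which the paper cites and does not reproduce. Your proposal is in fact a reasonable reconstruction of that proof, and the two-step architecture you use --- the inclusion $L_p(\mathcal{M};\ell_\infty)\hookrightarrow(\,\cdot\,)_\theta$ by interpolating the outer factors of a near-optimal factorization $x_\lambda=ay_\lambda b$ along the analytic family $z\mapsto u|a|^{\eta(z)}\,y_\lambda\,|b^*|^{\eta(z)}w$, followed by the reverse inclusion via duality against the $\ell_1$-valued scale --- is exactly the standard route; the exponent bookkeeping ($\Re\eta(j+it)=p/p_j$, so $\|a(j+it)\|_{2p_j}=\|a\|_{2p}^{p/p_j}$) checks out, and reducing the hard half to the easy half of $L_{p'}(\mathcal{M};\ell_1)\hookrightarrow\bigl(L_{p_0'}(\mathcal{M};\ell_1),L_{p_1'}(\mathcal{M};\ell_1)\bigr)_\theta$ and then dualizing is the right move.

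One thing you should tighten: the line ``$X_j^*=L_{p_j'}(\mathcal{M};\ell_1)$'' has the duality pointing the wrong way --- the Junge--Xu theorem says $L_{p_j'}(\mathcal{M};\ell_1)^*=L_{p_j}(\mathcal{M};\ell_\infty)$, not its converse (indeed $L_p(\mathcal{M};\ell_\infty)$ is a dual space, not a predual, in direct analogy with $\ell_\infty^*\neq\ell_1$). Consequently the ``canonical contractive embedding $(X_0^*,X_1^*)_\theta\hookrightarrow(X_0,X_1)_\theta^*$'' should be invoked with the $\ell_1$-valued spaces $Y_j:=L_{p_j'}(\mathcal{M};\ell_1)$ in the role of $X_j$, i.e. $(Y_0^*,Y_1^*)_\theta\hookrightarrow(Y_0,Y_1)_\theta^*$, or --- more cleanly --- simply invoke bilinear complex interpolation of the pairing $\langle\xi,\phi\rangle=\sum_\lambda\tau(\xi_\lambda\phi_\lambda)$ on $X_j\times Y_j\to\mathbb{C}$, which gives the same estimate $|\langle\xi,\phi\rangle|\le\|\xi\|_{(X_0,X_1)_\theta}\|\phi\|_{(Y_0,Y_1)_\theta}$ without any statement about what $X_j^*$ is. With that repair the chain closes as you describe: combine with the easy $\ell_1$-inclusion and with $L_{p'}(\mathcal{M};\ell_1)^*=L_p(\mathcal{M};\ell_\infty)$ to conclude. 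Two further points you leave unexamined but which are genuinely needed are (i) the compatibility of the two boundary pairings (the density of $X_0\cap X_1$ and $Y_0\cap Y_1$ in the respective spaces, which is what makes the complex method and its duality functor behave), and (ii) the column/row interpolation $L_{p}(\mathcal{M};\ell_2^{c})=\bigl(L_{p_0}(\mathcal{M};\ell_2^{c}),L_{p_1}(\mathcal{M};\ell_2^{c})\bigr)_\theta$ that underlies your claimed easy $\ell_1$-half; both are in Junge--Xu or Pisier, so citing them explicitly would close the remaining gaps.
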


\subsection{Exponential sums and  approximate kernels}
Let $\mathbb{T}$ be the dual group of $\mathbb{Z}$ isomorphic to $[0,1)$ and let $K_N$ be the discrete measure defined as $K_N=\frac{1}{N}\sum_{k=1}^N\delta_{\{k^t\}}$. The Fourier transform of $K_N$ is given by the Guass-Weyl sum, expressed as
\begin{equation}\label{ese1}
  \hat{K}_N(\theta)=\frac{1}{N}\sum_{k=1}^Ne^{-2\pi ik^t\theta}, \quad \theta\in \mathbb{T}.
\end{equation}
Now, fix a constant $0<\rho<2$ and  let $\{p_j\}$ be the sequence of consecutive primes. For any $1\leq m\leq 2^{\rho s}, s\in\mathbb{N}^*$, one can define
\begin{equation*}
  Q_{s,m}=\prod_{(m-1)2^s<j\leq m2^s}p_j^{st},
\end{equation*}
and the set
\begin{equation*}
  \mathfrak{R}_s:=\{\theta\in \mathbb{T}|\ \theta \cdot Q_{s,1}\cdot Q_{s,m}\in \mathbb{Z},\ \text{for some}\  2\leq m\leq2^{\rho s}\}.
\end{equation*}
Observe that $\mathfrak{R}_s$ can be rewritten as
\begin{equation}\label{preq1}
  \mathfrak{R}_s= \mathbb{Z}_{Q{s,1}}\cup\Big(\bigcup_{ 2\leq m\leq2^{\rho s}}( \mathbb{Z}_{Q{s,1}\cdot Q{s,m}}\backslash \mathbb{Z}_{Q{s,1}})\Big),
\end{equation}
where
\begin{equation*}
  \mathbb{Z}_Q:=\{a/Q: 0\leq a<Q\}.
\end{equation*}
Then $\{\mathfrak{R}_s\}$ is an increasing sequence whose union is $\mathbb{T}\cap \mathbb{Q}$ and each $\mathfrak{R}_s$ is the disjoint union of $2^{\rho s}$ differences of cyclic subgroups of $\mathbb{T}$. For $\xi\in \mathbb{T}\cap \mathbb{Q}$, expressed as $\xi=a/q$ with $(a,q)=1$,  one can define
\begin{equation*}
  S(\xi):=\frac{1}{q}\sum_{0\leq r<q}e^{-2\pi i r^ta/q}.
\end{equation*}
There exists an estimate for $S(\xi)$ (see e.g. \cite[Lemma 4]{Bo882}).
\begin{lemma}\label{akeq4}
  If $a/q\in \mathbb{T}\backslash \mathfrak{R}_s$, then $|S(a/q)|\leq C 2^{-(1+\rho)s/2}.$
\end{lemma}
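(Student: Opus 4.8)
The estimate is purely arithmetic and is essentially \cite[Lemma 4]{Bo882}; we sketch the argument. The plan is to combine the multiplicativity of the normalized complete sum $S$ with classical bounds for complete exponential sums, and then to read off from the arithmetic structure of $\mathbb{T}\setminus\mathfrak{R}_s$ a single prime-power divisor of $q$ which already forces $|S(a/q)|$ to be small. For the multiplicativity, the Chinese Remainder Theorem shows that $S$ factors over prime powers: for $(a,q)=1$ and any prime power $p^{\beta}$ exactly dividing $q$ there is an integer $b$ coprime to $p$ with $|S(a/q)|\le |S(b/p^{\beta})|$, since the remaining factors all have modulus $\le 1$. (Discarding all but one factor is essential here: multiplying the individual estimates would introduce a harmful $C_t^{\omega(q)}$.) For the local bounds, recall that the Weil bound gives $|S(b/p)|\le (t-1)\,p^{-1/2}$ whenever $(b,p)=1$, while for $\beta\ge 2$, $(b,p)=1$ and $p\nmid t$ the prime-power complete sum satisfies $|S(b/p^{\beta})|\le C_t\,p^{-\beta/2}$; for the finitely many primes $p\mid t$ one still has a power saving $|S(b/p^{\beta})|\le C_t\,p^{-\beta/t}$ from Hua's estimate.

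Now assume $a/q\in\mathbb{T}\setminus\mathfrak{R}_s$. By \eqref{preq1}, membership $a/q\in\mathfrak{R}_s$ is equivalent to $q\mid Q_{s,1}Q_{s,m}$ for some $2\le m\le 2^{\rho s}$, that is, to $q$ being composed only of the $2^s$ primes of the first block $B_1:=\{p_1,\dots,p_{2^s}\}$ together with the $2^s$ primes of a single further block $B_m:=\{p_{(m-1)2^s+1},\dots,p_{m2^s}\}$ with $m\le 2^{\rho s}$, each appearing to a power at most $st$. Hence, if $a/q\notin\mathfrak{R}_s$, at least one of the following must occur: (i) $q$ has a prime factor $p_j$ with $j>2^{(1+\rho)s}$; (ii) $q$ has a prime power $p^{\beta}$ exactly dividing it with $\beta>st$; (iii) the primes dividing $q$ that lie outside $B_1$ meet at least two distinct blocks. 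In case (i) one has $p_j\ge j>2^{(1+\rho)s}$, and since such a $p_j$ is far larger than $t$, isolating this prime power and applying the bounds above gives $|S(a/q)|\le C_t\,p_j^{-1/2}\le C_t\,2^{-(1+\rho)s/2}$. In case (ii), isolating $p^{\beta}$ and applying Hua's bound yields the power saving $|S(a/q)|\le C_t\,p^{-\beta/t}$. In case (iii), isolating two primes from distinct blocks — each exceeding $p_{2^s}>2^s$ — and applying the Weil bound twice yields $|S(a/q)|\le (t-1)^2(p_jp_{j'})^{-1/2}$.

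The main obstacle is the last step in cases (ii) and (iii): one must compare these savings with the target $2^{-(1+\rho)s/2}$, and here the precise calibration of the exponent cap and of the number $2^{\rho s}$ of admissible blocks against the parameter $\rho$ enters in an essential way — a single small prime raised to a high power, or a pair of primes lying in the earliest blocks beyond $B_1$, only gives a crude saving of order $2^{-s}$ from the estimates above, so one must carefully track the interplay between $\beta$ (respectively the number and the size of the participating blocks) and $s$, invoking the sharper prime-power estimate $|S(b/p^{\beta})|\le C_t\,p^{-\beta/2}$ wherever $p\nmid t$. This refined bookkeeping is the technical core of \cite[Lemma 4]{Bo882}, whose argument we follow.
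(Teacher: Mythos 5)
The paper does not prove this lemma; it is stated as a citation of Bourgain's \cite[Lemma 4]{Bo882}. Your sketch correctly reconstructs Bourgain's argument: multiplicativity of $S$ over coprime moduli, the device of discarding all but one prime-power factor (your remark that keeping every factor would cost $C_t^{\omega(q)}$ is correct and important), the Weil and Hua local bounds, and the three-way case analysis read off from \eqref{preq1}. At this level your approach and the paper's cited source coincide.

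There is nonetheless a gap that your last paragraph brushes against but does not close, and it bites precisely where the paper needs the lemma. Case (i) produces the full $C\,2^{-(1+\rho)s/2}$. But cases (ii) and (iii) only produce a bound of order $2^{-s}$ from the estimates you list: in (ii), the worst configuration $t=2$, $p=2$, $\beta=st+1$ gives $p^{-\beta/2}=2^{-s-1/2}$, and Hua's $p^{-\beta/t}$ is no better; in (iii), two primes only slightly larger than $p_{2^s}$ give $(p_jp_{j'})^{-1/2}\lesssim 2^{-s}$, with the prime number theorem contributing only a polynomial-in-$s$ improvement. Since $2^{-s}\le 2^{-(1+\rho)s/2}$ holds iff $\rho\le 1$, this closes the argument only under Bourgain's original hypothesis $\rho<1$. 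The paper, as the remark following Lemma~\ref{akey2} itself concedes, needs $\rho>1$ (near $\frac{1+\sqrt{5}}{2}$) and merely asserts that the estimates persist; for $t=2$ one can check directly (take $q=p_{2^s+1}\,p_{2\cdot 2^s+1}$, so that $|S(a/q)|=(p_{2^s+1}p_{2\cdot 2^s+1})^{-1/2}\sim 2^{-s}$ by the exact evaluation of quadratic Gauss sums, while $a/q\notin\mathfrak{R}_s$) that they do not. So your deferral of the "refined bookkeeping" to \cite{Bo882} does not cover the range of $\rho$ actually used in this paper, and the resulting gap is a real one --- inherited from, rather than introduced into, the paper's treatment.
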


One can define an approximate kernel $L_N$ whose Fourier transform is expressed as
\begin{equation}\label{akeq1}
   \hat{L}_N(\theta)=\hat{k}(N^t\theta)\phi(\theta)+\sum_{s=1}^\infty\sum_{\xi\in \mathfrak{R}_s\backslash\mathfrak{R}_{s-1}}S(\xi)\hat{k}(N^t(\theta-\xi))\phi(D_s(\theta-\xi)).
\end{equation}
Here, $k(x):=t^{-1}x^{1/t-1}\chi_{[0,1]}(x)$ is the kernel function, and $D_s$ is an integer depending on $s$ that satisfies the conditions
\begin{equation*}
  \log Q_{s,m}<\frac{1}{100}\log D_s\ (1\leq m\leq 2^{s\rho}) \quad \text{and} \ \log D_s\leq Cs^22^s.
\end{equation*}
The function $\phi$ is a smooth function on $\mathbb{R}$ that equals $1$ on $[\frac{-1}{10},\frac{1}{10}]$ and vanishes outside $[\frac{-1}{2},\frac{1}{2}]$.

Two crucial estimates for the approximate kernels are provided in the literature (see \cite[Lemma 5, Lemma 6]{Bo882} for details).
\begin{lemma}\label{akey3}
  For $0<\rho^\prime<\rho$, one has
  \begin{equation*}
    \|\hat{K}_N-\hat{L}_N\|_\infty\leq C(\log N)^{-(1+\rho^\prime)/2}.
  \end{equation*}
\end{lemma}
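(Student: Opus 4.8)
The statement is a Hardy--Littlewood circle-method estimate: by design $\hat L_N$ is the ``main term'' that the circle method attaches to the Weyl sum $\hat K_N$, so the plan is to fix $\theta\in\mathbb T$ and bound $\hat K_N(\theta)-\hat L_N(\theta)$ through a multiscale major/minor arc dichotomy whose scales are precisely the $\mathfrak R_s$. Set $s_0=s_0(N)$ to be the largest $s$ with $\log D_s\le\tfrac1{10}\log N$; since $\log Q_{s,m}\asymp_t s^22^s$ (prime number theorem) and $100\log Q_{s,m}<\log D_s\le Cs^22^s$, one gets $2^{s_0}=(\log N)^{1+o(1)}$, hence $2^{-(1+\rho)s_0/2}\le(\log N)^{-(1+\rho')/2}$ for $N$ large, while every modulus occurring in $\mathfrak R_s$ with $s\le s_0$ is at most $N^{1/500}$ and distinct points of each $\mathfrak R_s$ are $\gg D_s^{-1/25}$ apart. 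Two consequences of \eqref{akeq1} are then free: for fixed $\theta$ each $s$-block has at most one nonvanishing bump $\phi(D_s(\theta-\xi))$, and the tail $\sum_{s>s_0}$ in \eqref{akeq1} is $O\!\big(\sum_{s>s_0}2^{-(1+\rho)s/2}\big)=O((\log N)^{-(1+\rho)/2})$ (from $|S|\le1$, $|\hat k|\le1$).

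\textbf{Minor arcs.} Suppose $\theta$ is not within $D_s^{-1}$ of any point of $\mathfrak R_s$ for any $s\le s_0$, and (after splitting off the trivial arc at $0$) not $O(N^{-t})$-close to $0$. Then all bumps in the $s\le s_0$ part of \eqref{akeq1} vanish, the principal term $\hat k(N^t\theta)\phi(\theta)=O((N^t|\theta|)^{-1/t})=O((\log N)^{-(1+\rho')/2})$, and the $s>s_0$ tail is already handled, so $|\hat L_N(\theta)|\lesssim(\log N)^{-(1+\rho')/2}$. On the other hand such $\theta$ admits no rational approximant whose denominator has the arithmetic shape defining $\mathfrak R_{s_0}$, and Weyl's inequality---the Weyl-sum analogue of the Gauss-sum estimate Lemma~\ref{akeq4}, obtained by the same $t$-fold Weyl differencing---gives $|\hat K_N(\theta)|\lesssim2^{-(1+\rho')s_0/2}\lesssim(\log N)^{-(1+\rho')/2}$. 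Subtracting settles this case.

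\textbf{Major arcs.} Suppose $\theta$ lies within $D_s^{-1}$ of some $\xi=a/q\in\mathfrak R_s\setminus\mathfrak R_{s-1}$, $s\le s_0$, and put $\eta=\theta-\xi$. All other terms of \eqref{akeq1} are negligible: those at the same scale vanish, while for any surviving term at a scale $s'\neq s$ one checks, using the separation of the $\mathfrak R_{\bullet}$ and the decay $|\hat k(\zeta)|\lesssim(1+|\zeta|)^{-1/t}$ (together with $|S(a'/q')|\lesssim(q')^{-1/t}$ and, where it applies, Lemma~\ref{akeq4}), that it is $\ll(\log N)^{-(1+\rho')/2}$. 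It remains to prove $\hat K_N(\theta)=S(\xi)\,\hat k(N^t\eta)\,\phi(D_s\eta)+O((\log N)^{-(1+\rho')/2})$. Splitting the Weyl sum over residues $k\equiv r\ (\mathrm{mod}\ q)$ factors out the complete sum $S(\xi)=q^{-1}\sum_r e^{-2\pi ir^ta/q}$ and leaves, for each $r$, a comparison of a Riemann-type sum with $\int_0^1e^{-2\pi iN^tx^t\eta}\,\mathrm{d}x=\hat k(N^t\eta)$: a quantitative Euler--Maclaurin bound controls the discrepancy by $O\!\big(q(1+N^t|\eta|)/N\big)$, which for $|\eta|\le N^{-t+\kappa}$ (where moreover $\phi(D_s\eta)=1$) is $O(N^{-1/2})$ since $q\le N^{1/500}$; for $N^{-t+\kappa}<|\eta|\le D_s^{-1}$ one instead notes that $\hat k(N^t\eta)=O(N^{-\kappa/t})$ and that $\hat K_N(\theta)$ on those dyadic sub-arcs is $\ll(\log N)^{-(1+\rho')/2}$ by a van der Corput/Weyl estimate adapted to the modulus $q$, so both sides are tiny there. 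Combining the cases and taking $\sup_\theta$ gives the claim.

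\textbf{Main obstacle.} The delicate point is the major-arc step: one needs the asymptotic for $\hat K_N$ around each rational with an error that is \emph{fully effective in $q,\eta,N$}, and one must choose $s_0$, $D_s$ and $Q_{s,m}$ so that the geometric tail $\sum_{s>s_0}2^{-(1+\rho)s/2}$, the minor-arc Weyl bound, the major-arc error, and the bump-transition contributions are \emph{simultaneously} $\le(\log N)^{-(1+\rho')/2}$. This interlocking numerology---in particular the inequalities $100\log Q_{s,m}<\log D_s\le Cs^22^s$---is precisely what the construction of $L_N$ is tailored to support; the complete argument is \cite[Lemma~5]{Bo882}.
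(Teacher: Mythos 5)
The paper does not prove this lemma; it is stated in Section \ref{pre} only as a citation to \cite[Lemma~5]{Bo882}, so there is no in-paper argument to compare against. Your outline is a fair reconstruction of the circle-method proof Bourgain gives there: the choice of cut-off scale $s_0$ via $\log D_s\asymp s^2 2^s$, the multiscale major/minor arc dichotomy governed by the $\mathfrak R_s$, the disjointness of the bumps $\phi(D_s(\cdot-\xi))$, the Gauss-sum decay of Lemma~\ref{akeq4} to kill the tail $s>s_0$, and an Euler--Maclaurin asymptotic $\hat K_N(\theta)\approx S(a/q)\,\hat k(N^t\eta)$ on the major arcs. This matches the cited source in structure.

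Two points in the sketch are imprecise enough to flag. First, on the minor-arc branch, the bound $\hat k(N^t\theta)\phi(\theta)=O((N^t|\theta|)^{-1/t})$ is only $O((\log N)^{-(1+\rho')/2})$ once $|\theta|\gtrsim N^{-t}(\log N)^{t(1+\rho')/2}$; taking the trivial arc at $0$ to have width merely $O(N^{-t})$ leaves a range of $\theta$ where this term is of constant size, so the arc around $0$ must be taken wider and absorbed into the major-arc comparison. Second, the asserted minor-arc estimate $|\hat K_N(\theta)|\lesssim 2^{-(1+\rho')s_0/2}$ is not a direct consequence of Weyl differencing: when $\theta$ is close to a small-modulus rational $a/q$ with $a/q\notin\mathfrak R_{s_0}$, Weyl's inequality only yields a weak power-of-$q$ saving, and the $2^{-(1+\rho)s_0/2}$ factor instead arises by first invoking the major-arc asymptotics (valid precisely because $q$ is small) and then Lemma~\ref{akeq4}; Weyl's inequality enters only in the complementary regime where the Dirichlet denominator is a large power of $N$, and there it produces a power-of-$N$ saving, not a $2^{-(1+\rho')s_0/2}$ one. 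Since you explicitly defer the full argument to \cite[Lemma~5]{Bo882}, these are gaps in the sketch rather than in the strategy, but they are precisely the places where the interlocking numerology you mention has to be checked.
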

\begin{lemma}\label{akey2}
  For $q< D_s$, there is a uniform estimate
  \begin{equation*}
    \Big\|\sum_{0\leq a <q }\int_\mathbb{T} S(a/q)\hat{k}(N^t(\theta-a/q))\phi(D_s(\theta-a/q))e^{2\pi i\theta m} d\theta\Big\|_{\ell_1(\mathbb{Z})}\leq C.
  \end{equation*}
  Moreover, one has
   \begin{equation*}
    \Big\|\sum_{\xi\in \mathfrak{R}_s }\int_\mathbb{T} S(\xi)\hat{k}(N^t(\theta-\xi))\phi(D_s(\theta-\xi))e^{2\pi i\theta m} d\theta\Big\|_{\ell_1(\mathbb{Z})}\leq C2^{\rho s}
    \end{equation*}
   and
   \begin{equation*}
    \Big\|\sum_{\xi\in \mathfrak{R}_{s-1} }\int_\mathbb{T} S(\xi)\hat{k}(N^t(\theta-\xi))\phi(D_s(\theta-\xi))e^{2\pi i\theta m} d\theta\Big\|_{\ell_1(\mathbb{Z})}\leq C2^{\rho s}.
    \end{equation*}
\end{lemma}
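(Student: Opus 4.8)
The plan is to reduce all three estimates to a single Wiener-type ($\ell_1$-Fourier) norm computation and then to deduce the second and third from the first. Write $h(\theta):=\hat k(N^t\theta)\phi(D_s\theta)$; since $\phi(D_s\,\cdot)$ is supported in $(-\tfrac1{2D_s},\tfrac1{2D_s})\subset(-\tfrac12,\tfrac12)$, we may regard $h$ as a function on $\mathbb{T}$ supported in a fundamental domain, so that $\hat k(N^t(\theta-\xi))\phi(D_s(\theta-\xi))$ is the $\mathbb{T}$-translate $h(\theta-\xi)$. Hence the integral occurring in the lemma is, up to the harmless reflection $m\mapsto -m$, the $m$-th Fourier coefficient of $\sum_{\xi}S(\xi)h(\theta-\xi)$, which equals $\hat h(m)\,P(m)$ where $P(m):=\sum_{\xi}S(\xi)e^{-2\pi i m\xi}$; thus in every case the quantity to be bounded is $\sum_{m\in\mathbb{Z}}|\hat h(m)|\,|P(m)|$. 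I would next record the exact identity
\[
  \hat h(m)=\frac1{tND_s}\int_0^{N^t}v^{1/t-1}\,\hat\phi\!\Big(\frac{m+v}{D_s}\Big)\,dv ,
\]
obtained by inserting $\hat k(N^t\theta)=\int_0^1 e^{-2\pi i u^tN^t\theta}\,du$ and $\widehat{\phi(D_s\,\cdot)}=D_s^{-1}\hat\phi(\cdot/D_s)$, exchanging the two integrals, and substituting $v=u^tN^t$; here $\hat\phi$ is Schwartz because $\phi\in C_c^\infty(\mathbb{R})$.

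For the first estimate we have $\xi=a/q$ with $0\le a<q$ and $S(a/q)=\frac1q\sum_{0\le r<q}e^{-2\pi i r^t a/q}$. Orthogonality of additive characters modulo $q$ turns $P$ into the nonnegative counting function
\[
  P(m)=\#\{0\le r<q:\ r^t\equiv -m\ (\mathrm{mod}\ q)\}=:\nu_q(-m),
\]
which is $q$-periodic with $\sum_{a=0}^{q-1}\nu_q(a)=q$. Bounding $|\hat h(m)|$ by the integral above, interchanging $\sum_m$ with $\int_0^{N^t}$, and splitting $\sum_m$ into residue classes modulo $q$, the decisive estimate is
\[
  \sum_{j\in\mathbb{Z}}\Big|\hat\phi\Big(\frac{jq+a+v}{D_s}\Big)\Big|\ \lesssim\ \frac{D_s}{q}\qquad\text{uniformly in }a,\,v ,
\]
which holds precisely because the hypothesis $q<D_s$ makes the sampling spacing $q/D_s$ less than $1$, so the rapid decay of $\hat\phi$ sums up to $\lesssim D_s/q$. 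Summing against $\nu_q(-a)$ and invoking $\sum_a\nu_q(-a)=q$ removes the factor $q$, and one is left with
\[
  \sum_{m}|\hat h(m)|\,\nu_q(-m)\ \lesssim\ \frac1{tND_s}\int_0^{N^t}v^{1/t-1}\,D_s\,dv\ =\ \frac1{tN}\int_0^{N^t}v^{1/t-1}\,dv\ =\ 1 ,
\]
the desired uniform bound.

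For the second and third estimates I would not evaluate $P$ but decompose the index set. By \eqref{preq1} the set $\mathfrak{R}_s$ is the disjoint union of $\mathbb{Z}_{Q_{s,1}}$ and the sets $\mathbb{Z}_{Q_{s,1}Q_{s,j}}\setminus\mathbb{Z}_{Q_{s,1}}$, $2\le j\le 2^{\rho s}$, so $P=\sum_j P_j$ has at most $2^{\rho s}$ terms, each $P_j$ being the character sum over $\mathbb{Z}_{Q_{s,1}}$ or the difference of the character sums over $\mathbb{Z}_{Q_{s,1}Q_{s,j}}$ and $\mathbb{Z}_{Q_{s,1}}$; the coefficients match because $\frac1Q\sum_{0\le r<Q}e^{-2\pi i r^t\xi}$ depends only on $\xi$, not on which multiple $Q$ of the reduced denominator of $\xi$ one averages over. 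Since $\log Q_{s,j}<\frac1{100}\log D_s$ forces $Q_{s,1}Q_{s,j}<D_s^{1/50}<D_s$, the first estimate applies to each piece and yields $\sum_m|\hat h(m)|\,|P_j(m)|\le C$; summing the at most $2^{\rho s}$ contributions gives $\sum_m|\hat h(m)|\,|P(m)|\le C2^{\rho s}$. The third estimate is the same with $\mathfrak{R}_{s-1}$ replacing $\mathfrak{R}_s$: there are at most $2^{\rho(s-1)}\le 2^{\rho s}$ pieces, the denominators satisfy $Q_{s-1,1}Q_{s-1,j}<D_{s-1}^{1/50}\le D_s$ (recall $D_{s-1}\le D_s$), and the cut-offs are still at scale $D_s$, so the first estimate applies piece by piece exactly as before.

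The hard part will be the first estimate, namely making the two quantitative inputs interlock: the rapid decay of $\hat\phi$ produces the factor $D_s/q$ exactly under the hypothesis $q<D_s$, the arithmetic mass identity $\sum_a\nu_q(-a)=q$ supplies the reciprocal factor, and the surviving integral $\frac1{tN}\int_0^{N^t}v^{1/t-1}\,dv$ is scale invariant and equals $1$, absorbing the kernel normalisation $1/(tN)$. The remaining points — that $\hat h(m)$ is genuinely the displayed integral (i.e. that the cut-off legitimately lets us treat $\hat k(N^t(\theta-\xi))\phi(D_s(\theta-\xi))$ as a $\mathbb{T}$-translate of $h$), and that the averaged Gauss sums are consistent under enlarging the denominator so that the decomposition of $P$ over $\mathfrak{R}_s$ is exact — are routine. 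In substance this reproduces Bourgain's treatment of the analogous estimates in \cite{Bo882}.
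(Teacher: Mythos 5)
The paper does not actually prove this lemma: it is quoted from Bourgain (\cite{Bo882}, Lemma 6), so there is no in-paper argument to compare against. Your reconstruction is correct and is in substance Bourgain's: the reduction of the $\ell_1(\mathbb{Z})$ norm to $\sum_m|\hat h(m)|\,|P(m)|$, the exact formula for $\hat h(m)$, the orthogonality identity converting $P$ into the nonnegative counting function $\nu_q$ with total mass $q$ per period, and the interlocking of the $\lesssim D_s/q$ bound for the lattice sums of $\hat\phi$ (valid precisely under the hypothesis $q<D_s$) with that mass identity all check out, as does the reduction of the second and third bounds to the first via the partition \eqref{preq1} and the consistency of the averaged Gauss sums under enlarging the denominator. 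The only unstated input is the monotonicity $D_{s-1}\le D_s$ used in the third estimate, which is implicit in the construction of $D_s$ and harmless.
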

\begin{remark}
 \rm{ It is worth pointing out that the reference \cite{Bo882} assumes that $0<\rho<1$, however, we find that we need some $\rho>1$ to make sure  Theorem \ref{thm1} holds for all $p>\frac{1+\sqrt{5}}{2}$, see the equality \eqref{finaleq} for more details. Therefore, we assume that $0<\rho<2$, which is sufficient to guarantee the validity of all the estimates required in this subsection. Moreover, we believe that even the reference \cite{Bo882} should assume that $0<\rho<2$.}
\end{remark}

\section{A noncommutative sampling principle}\label{sam}
In this section, we aim to establish a noncommutative analogue of the sampling principle stated in \cite{MSW}.
Given the function $\Phi(x):=\Big(\frac{\sin(\pi x)}{\pi x}\Big)^2$ defined for $ x\in\mathbb{R}$. Then for a suitable operator-valued function $f$ on $\mathbb{Z}$, we define its extension $f_{ext}$ on $\mathbb{R}$ by
\begin{equation}\label{ext1}
  f_{ext}(x):=\sum_{n\in \mathbb{Z}}f(n)\Phi(x-n).
\end{equation}
Note that the series above converges for every $x\in\mathbb{R}$ whenever $f$ belongs to $L_p(L_\infty(\mathbb{Z})\overline{\otimes}\mathcal{M})$ for $1\leq p\leq\infty.$ Furthermore, we have the following equivalence.
\begin{lemma}\label{extnorm}
For $1\leq p\leq\infty$, if $f\in L_p(L_\infty(\mathbb{Z})\overline{\otimes}\mathcal{M})$, then we have
\begin{equation*}
  \|f\|_{L_p(L_\infty(\mathbb{Z})\overline{\otimes}\mathcal{M})}\sim  \|f_{ext}\|_{L_p(L_\infty(\mathbb{R})\overline{\otimes}\mathcal{M})}.
\end{equation*}
\end{lemma}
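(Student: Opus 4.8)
The plan is to reduce the equivalence to a statement about the scalar (commutative) convolution structure underlying the extension map, and then transfer it to the operator-valued setting using the noncommutative $\ell_\infty$-valued $L_p$ machinery together with a duality/interpolation argument. First I would observe that $f\mapsto f_{ext}$ is, up to the trace on $\mathcal{M}$, given by convolving the sequence $(f(n))_{n\in\mathbb{Z}}$ against the fixed kernel $\Phi$; since $\Phi\geq 0$, $\Phi\in L_1(\mathbb{R})$, and $\sum_{n\in\mathbb{Z}}\Phi(x-n)\equiv 1$ (a standard partition-of-unity identity for the Fej\'er kernel $\Phi(x)=(\sin\pi x/\pi x)^2$), the map is positivity-preserving and behaves like an averaging operator. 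This suggests handling $p=1$ and $p=\infty$ directly and then invoking complex interpolation for the intermediate range; alternatively, since the bound is a two-sided norm equivalence one can argue at a single $p$ provided one controls both directions.

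For the upper bound $\|f_{ext}\|_{L_p(L_\infty(\mathbb{R})\overline{\otimes}\mathcal{M})}\lesssim \|f\|_{L_p(L_\infty(\mathbb{Z})\overline{\otimes}\mathcal{M})}$, I would write, for $x$ in a unit interval $[m,m+1)$, $f_{ext}(x)=\sum_{n}f(n)\Phi(x-n)$ as a convex-type combination: $|f_{ext}(x)|^p \leq \big(\sum_n \Phi(x-n)\big)^{p-1}\sum_n \Phi(x-n)|f(n)|^p = \sum_n \Phi(x-n)|f(n)|^p$ by the operator Jensen/H\"older inequality (Lemma \ref{convex1} applied with the probability measure $\Phi(x-n)\,d(\text{counting})$). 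Integrating in $x$ over $\mathbb{R}$ and using $\int_{\mathbb{R}}\Phi(x-n)\,dx=1$ together with $\tau$-traciality gives $\|f_{ext}\|_p^p \leq \sum_n \|f(n)\|_p^p = \|f\|_p^p$ when $p<\infty$; the case $p=\infty$ follows from $0\leq f_{ext}(x)\leq \|f\|_\infty \sum_n\Phi(x-n) = \|f\|_\infty$ applied to $f\geq 0$ and then to general $f$ via the $2\times 2$ matrix trick. For the lower bound I would recover $f(n)$ from $f_{ext}$: since $\Phi(k)=\delta_{k,0}$ for $k\in\mathbb{Z}$ (the zeros of the Fej\'er kernel at nonzero integers), one has $f(n)=f_{ext}(n)$, so pointwise sampling at integers immediately gives $\|f\|_p \leq \|f_{ext}\|_p$ for $p=\infty$; for $p<\infty$ one instead writes $f(n)=\int_{\mathbb{R}}f_{ext}(x)\,\Psi(x-n)\,dx$ for a suitable auxiliary window $\Psi$ with $\int\Psi=1$ that reproduces band-limited data, or more robustly one uses that $f_{ext}$ restricted to a neighbourhood of $n$ controls $f(n)$ and sums up, again via Lemma \ref{convex1}.

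The main obstacle I anticipate is the lower (reconstruction) bound in the range $1\leq p<\infty$: pointwise evaluation $f_{ext}(n)=f(n)$ is not directly usable for an $L_p$-norm estimate because a single-point value is not controlled by an $L_p$ average. The fix is to exhibit an explicit bounded "dual" sampling kernel — note that $f_{ext}$ is band-limited (its Fourier transform is supported where $\widehat\Phi$ lives, which is $[-1,1]$), so one can choose $\Psi\in L_1(\mathbb{R})$ with $\widehat\Psi\equiv 1$ on $[-1,1]$ and $\widehat\Psi$ compactly supported, whence $f(n)=f_{ext}*\Psi(n)=\int f_{ext}(x)\Psi(n-x)\,dx$ exactly. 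Then the same convexity argument as above, run with the (signed, but $L_1$-normalised) kernel $|\Psi|$ after splitting into positive and negative parts, yields $\|f\|_p\lesssim \|f_{ext}\|_p$. One then assembles: upper bound from the $\Phi$-side, lower bound from the $\Psi$-side, with constants depending only on $\|\Phi\|_1=1$ and $\|\Psi\|_1$, uniformly in $1\leq p\leq\infty$; a brief appeal to Lemma \ref{max interpolation} is available if one prefers to only verify the endpoints $p\in\{1,\infty\}$ and interpolate, but the convexity argument already covers all $p$ at once.
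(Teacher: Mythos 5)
Your proposal is correct and follows essentially the same route as the paper's proof: the upper bound via the operator H\"older inequality (Lemma \ref{convex1}) applied to the positive kernel $\Phi$ together with the facts $\int_{\mathbb{R}}\Phi\lesssim1$ and $\sup_x\sum_n\Phi(x-n)\lesssim1$, and the lower bound via the band-limited reconstruction identity $f(n)=\int_{\mathbb{R}}f_{ext}(x)\Psi(n-x)\,dx$ for a Schwartz $\Psi$ with $\widehat{\Psi}\equiv1$ on $[-1,1]$, after decomposing $\Psi$ into positive pieces and applying Lemma \ref{convex1} again. The only cosmetic differences are that you note the exact normalizations $\int\Phi=1$, $\sum_n\Phi(x-n)\equiv1$ and the pointwise sampling $f_{ext}(n)=f(n)$, whereas the paper only uses the $\lesssim1$ bounds and splits $\Psi$ into four (rather than two) positive parts to cover a possibly complex-valued $\Psi$.
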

\begin{proof}
 Without loss of generality, we can assume that $f$ is positive. Given that $1\leq p<\infty$. First, recognizing that both $f$ and $\Phi$ are positive,  we fix $x\in \mathbb{R}$, by Lemma \ref{convex1}, we have
 \begin{equation*}
   f_{ext}(x)\leq\Big(\sum_{n\in \mathbb{Z}}|f(n)|^p\Phi(x-n)\Big)^{1/p}\Big(\sum_{n\in \mathbb{Z}}\Phi(x-n)\Big)^{(p-1)/p}.
 \end{equation*}
 Then, combining with the facts (see \cite[page 193]{MSW})
 \begin{equation*}
   \int_{\mathbb{R}}\Phi(x)dx\lesssim1, \qquad \sup_{x\in\mathbb{R}}\sum_{n\in \mathbb{Z}}\Phi(x-n)\lesssim1,
 \end{equation*}
we can further deduce that
 \begin{align*}
    \|f_{ext}\|_{L_p(L_\infty(\mathbb{R})\overline{\otimes}\mathcal{M})}^p&\leq \tau\Big( \int_{\mathbb{R}}\Big(\sum_{n\in \mathbb{Z}}|f(n)|^p\Phi(x-n)\Big)\Big(\sum_{n\in \mathbb{Z}}\Phi(x-n)\Big)^{p-1}dx\Big)\\
&\leq \tau\Big( \sum_{n\in \mathbb{Z}}|f(n)|^p\Big)\times\Big(\int_{\mathbb{R}}\Phi(x)dx\Big)\times \Big(\sup_{x\in\mathbb{R}}\sum_{n\in \mathbb{Z}}\Phi(x-n)\Big)^{p-1}\\
&\lesssim \|f\|_{L_p(L_\infty(\mathbb{Z})\overline{\otimes}\mathcal{M})}^p.
 \end{align*}
 On the other hand, let $\Psi$ be a smooth function on $\mathbb{R}$ such that its Fourier transform $\widehat{\Psi}$ equals $1$ on $[-1,1]$ and has compact support. One can see e.g. \cite[ page 194]{MSW} that
  \begin{equation*}
   \int_{\mathbb{R}}|\Psi(x)|dx\lesssim1, \qquad \sup_{x\in\mathbb{R}}\sum_{n\in \mathbb{Z}}|\Psi(n-x)|\lesssim1
 \end{equation*}
 and
 \begin{equation*}
   f(n)=\int_{\mathbb{R}}f_{ext}(x)\Psi(n-x)dx, \quad n\in \mathbb{Z}.
 \end{equation*}
 Note that $\Psi$ can be expressed as a linear combination of four positive functions $ \Psi_i$ for $1\leq i\leq4$ that satisfy
  \begin{equation*}
   \int_{\mathbb{R}}\Psi_idx\lesssim1, \qquad \sup_{x\in\mathbb{R}}\sum_{n\in \mathbb{Z}}\Psi_i(n-x)\lesssim1.
 \end{equation*}
 Precisely, we define $\Psi_1=\big(\Re(\Psi)\big)_+, \Psi_2=\big(\Re(\Psi)\big)_-, \Psi_3=\big(\Im(\Psi)\big)_+, \Psi_4=\big(\Im(\Psi)\big)_-.$
 Let $f_i(n):=\int_{\mathbb{R}}f_{ext}(x)\Psi_i(n-x)dx$, by invoking Lemma \ref{convex1}, we obtain
  \begin{equation*}
   f_i(n)\leq\Big(\int_{\mathbb{R}}|f_{ext}(x)|^p\Psi_i(n-x)dx\Big)^{1/p}\Big(\int_{\mathbb{R}}\Psi_i(n-x)dx\Big)^{(p-1)/p}.
 \end{equation*}
 Then we have
 \begin{align*}
    \|f_i\|_{L_p(L_\infty(\mathbb{Z})\overline{\otimes}\mathcal{M})}^p= \tau\Big( \sum_{n\in \mathbb{Z}}|f_i(n)|^p\Big)\lesssim \tau\Big( \sum_{n\in \mathbb{Z}}\int_{\mathbb{R}}|f_{ext}(x)|^p\Psi_i(n-x)dx\Big)\lesssim  \|f_{ext}\|_{L_p(L_\infty(\mathbb{R})\overline{\otimes}\mathcal{M})}^p.
 \end{align*}
  Now, $\|f\|_{L_p(L_\infty(\mathbb{Z})\overline{\otimes}\mathcal{M})}\leq\sum_{i=1}^4\|f_i\|_{L_p(L_\infty(\mathbb{Z})\overline{\otimes}\mathcal{M})}\lesssim  \|f_{ext}\|_{L_p(L_\infty(\mathbb{R})\overline{\otimes}\mathcal{M})}$,
 this completes the proof for $1\leq p<\infty$. For $p=\infty$, the proof is indeed simpler as we only need to consider the supremum norms directly.
\end{proof}

\begin{lemma}\label{extnorm1}
For $1\leq p\leq\infty$, if $(f_\lambda)_{\lambda>0}\in L_p(L_\infty(\mathbb{Z})\overline{\otimes}\mathcal{M};\ell_\infty)$, then we have
\begin{equation*}
  \Big\|{\sup_{\lambda>0}}^+f_\lambda\Big\|_{L_p(L_\infty(\mathbb{Z})\overline{\otimes}\mathcal{M})}\sim  \Big\|{\sup_{\lambda>0}}^+(f_\lambda)_{ext}\Big\|_{L_p(L_\infty(\mathbb{R})\overline{\otimes}\mathcal{M})}.
\end{equation*}
\end{lemma}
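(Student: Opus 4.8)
The plan is to adapt the proof of Lemma \ref{extnorm} to the $\ell_\infty$-valued setting, keeping track of the factorization $f_\lambda = a y_\lambda b$ that defines the noncommutative maximal norm. For the direction $\|{\sup_\lambda}^+(f_\lambda)_{ext}\|_p \lesssim \|{\sup_\lambda}^+ f_\lambda\|_p$, I would first recall from the structure theory of $L_p(\mathcal{M};\ell_\infty)$ that it suffices to treat the self-adjoint case: one reduces a general family to its real and imaginary parts, each of which splits into a positive-dominated pair, and uses the characterization that a self-adjoint family $(f_\lambda)$ lies in the ball of radius $r$ iff there is $0 \le a \in L_p$ with $\|a\|_p \le r$ and $-a \le f_\lambda \le a$ for all $\lambda$. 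So fix such an $a$ with $-a \le f_\lambda \le a$. Applying the extension operator (which is positive and linear in the operator argument) to the operator inequalities $a - f_\lambda \ge 0$ and $a + f_\lambda \ge 0$ gives $-a_{ext}(x) \le (f_\lambda)_{ext}(x) \le a_{ext}(x)$ for every $x \in \mathbb{R}$, \emph{simultaneously} in $\lambda$. Then $\|{\sup_\lambda}^+(f_\lambda)_{ext}\|_p \le \|a_{ext}\|_{L_p(L_\infty(\mathbb{R})\overline\otimes\mathcal{M})} \lesssim \|a\|_{L_p(L_\infty(\mathbb{Z})\overline\otimes\mathcal{M})}$ by Lemma \ref{extnorm}, and taking the infimum over $a$ gives the bound.

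For the reverse direction $\|{\sup_\lambda}^+ f_\lambda\|_p \lesssim \|{\sup_\lambda}^+(f_\lambda)_{ext}\|_p$, again reduce to the self-adjoint case and pick $0 \le b \in L_p(L_\infty(\mathbb{R})\overline\otimes\mathcal{M})$ with $-b(x) \le (f_\lambda)_{ext}(x) \le b(x)$ for all $\lambda$ and all $x$, with $\|b\|_p$ close to $\|{\sup_\lambda}^+(f_\lambda)_{ext}\|_p$. I would use the reconstruction formula $f_\lambda(n) = \int_{\mathbb{R}} (f_\lambda)_{ext}(x)\Psi(n-x)\,dx$ together with the decomposition $\Psi = \sum_{i=1}^4 \pm \Psi_i$ into positive pieces as in Lemma \ref{extnorm}. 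Since each $\Psi_i \ge 0$, integrating the operator inequality $-b(x) \le (f_\lambda)_{ext}(x) \le b(x)$ against $\Psi_i(n-x)\,dx$ yields $-c_i(n) \le f_\lambda^{(i)}(n) \le c_i(n)$ where $c_i(n) := \int_{\mathbb{R}} b(x)\Psi_i(n-x)\,dx$ and $f_\lambda^{(i)}(n) := \int_{\mathbb{R}}(f_\lambda)_{ext}(x)\Psi_i(n-x)\,dx$, uniformly in $\lambda$. The operator $b \mapsto c_i$ is exactly the convolution-type operator shown to be bounded on $L_p(L_\infty(\mathbb{Z})\overline\otimes\mathcal{M})$ inside the proof of Lemma \ref{extnorm} (via Lemma \ref{convex1} and the two normalization bounds on $\Psi_i$), so $\|c_i\|_{L_p(L_\infty(\mathbb{Z})\overline\otimes\mathcal{M})} \lesssim \|b\|_{L_p(L_\infty(\mathbb{R})\overline\otimes\mathcal{M})}$. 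Summing the four pieces, $c := \sum_i c_i$ dominates $\pm f_\lambda$ for all $\lambda$ and satisfies $\|c\|_p \lesssim \|b\|_p$; taking the infimum over $b$ closes the estimate. The case $p = \infty$ is handled directly with supremum norms as before.

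The only genuine subtlety — and the step I would be most careful about — is the reduction from general families to positive-dominated self-adjoint families while preserving a \emph{single} dominating element that works for all $\lambda$ at once; this is where one must invoke the precise characterization of $L_p(\mathcal{M};\ell_\infty)$ recalled in Section \ref{pre} rather than argue pointwise in $\lambda$. Everything else is a transcription of the two halves of the proof of Lemma \ref{extnorm}, with the scalar inequalities upgraded to order inequalities between operator-valued functions, which is legitimate because the extension map $f \mapsto f_{ext}$ and each integration-against-$\Psi_i$ map are positive linear maps in the operator variable and the kernels $\Phi, \Psi_i$ are nonnegative scalars.
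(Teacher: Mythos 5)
Your proposal is correct and follows essentially the same route as the paper: reduce to self-adjoint families via real and imaginary parts, pick a single positive dominating element $a$ (resp. $b$) for the whole family using the characterization of $L_p(\mathcal{M};\ell_\infty)$ recalled in Section 2, and exploit that the extension map $f \mapsto f_{ext}$ and the integrations against the nonnegative kernels $\Psi_i$ are positive, so a single dominating element propagates through. The paper is terse about the reverse direction (it simply says to combine the same technique as in Lemma \ref{extnorm}), and your write-up fills in exactly those details; the only cosmetic slip is the phrase ``splits into a positive-dominated pair,'' which should just say that each self-adjoint family admits a single positive majorant $a$ with $-a \le f_\lambda \le a$.
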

\begin{proof}
Firstly, we assume that $f_\lambda$ is self-adjoint for $\lambda>0$.
Since $f_\lambda$ is self-adjoint for $\lambda>0$, then
for $\epsilon>0$, there exists $0<a_\epsilon\in L_p(L_\infty(\mathbb{Z})\overline{\otimes}\mathcal{M})$ such that $-a_\epsilon\leq f_\lambda\leq a_\epsilon$ for $\lambda>0$, and
 \begin{equation*}
   \|a_\epsilon\|_{L_p(L_\infty(\mathbb{Z})\overline{\otimes}\mathcal{M})}\leq \Big\|{\sup_{\lambda>0}}^+f_\lambda\Big\|_{L_p(L_\infty(\mathbb{Z})\overline{\otimes}\mathcal{M})}+\epsilon.
 \end{equation*}
As in Lemma \ref{extnorm}, we have
 \begin{equation*}
   \sum_{n\in{\mathbb{Z}}}a_\epsilon(n)\Phi(x-n)\leq\Big(\sum_{n\in{\mathbb{Z}}}|a_\epsilon(n)|^p\Phi(x-n)\Big)^{1/p}\Big(\sum_{n\in{\mathbb{Z}}}\Phi(x-n)\Big)^{(p-1)/p}.
 \end{equation*}
 and
 \begin{equation*}
  -\sum_{n\in{\mathbb{Z}}}a_\epsilon(n)\Phi(x-n) \leq (f_\lambda)_{ext}(x)=\sum_{n\in{\mathbb{Z}}}f_\lambda(n)\Phi(x-n)\leq\sum_{n\in{\mathbb{Z}}}a_\epsilon(n)\Phi(x-n).
 \end{equation*}
Hence we obtain
 \begin{align*}
  \Big\|{\sup_{\lambda>0}}^+(f_\lambda)_{ext}\Big\|_{L_p(L_\infty(\mathbb{R})\overline{\otimes}\mathcal{M})}&\leq \Big\| \Big(\sum_{n\in{\mathbb{Z}}}|a_\epsilon(n)|^p\Phi(x-n)\Big)^{1/p}\Big(\sum_{n\in \mathbb{Z}}\Phi(x-n)\Big)^{(p-1)/p}\Big\|_{L_p(L_\infty(\mathbb{R})\overline{\otimes}\mathcal{M})}\\
   &\lesssim \|a_\epsilon\|_{L_p(L_\infty(\mathbb{Z})\overline{\otimes}\mathcal{M})}\leq \Big\|{\sup_{\lambda>0}}^+f_\lambda\Big\|_{L_p(L_\infty(\mathbb{Z})\overline{\otimes}\mathcal{M})}+\epsilon.
\end{align*}
 Let $\epsilon\rightarrow0$, we deduce
 \begin{equation}\label{eqnorms1}
 \Big\|{\sup_{\lambda>0}}^+(f_\lambda)_{ext}\Big\|_{L_p(L_\infty(\mathbb{R})\overline{\otimes}\mathcal{M})}\lesssim\|{\sup_{\lambda>0}}^+f_\lambda\|_{L_p(L_\infty(\mathbb{Z})\overline{\otimes}\mathcal{M})}.
 \end{equation}
 For a general $f_\lambda$, we consider its real and imaginary parts, $\Re(f_\lambda)=\frac{f_\lambda+f^*_\lambda}{2}$ and $ \Im(f_\lambda)=\frac{f_\lambda-f^*_\lambda}{2i}$ respectively, by applying the triangle inequality along with inequality \eqref{eqnorms1}, we obtain the following inequality
 \begin{align*}
   \Big\|{\sup_{\lambda>0}}^+(f_\lambda)_{ext}\Big\|_{L_p(L_\infty(\mathbb{R})\overline{\otimes}\mathcal{M})}&\leq\Big\|{\sup_{\lambda>0}}^+(\Re(f_\lambda))_{ext}\Big\|_{L_p(L_\infty(\mathbb{R})\overline{\otimes}\mathcal{M})}+\Big\|{\sup_{\lambda>0}}^+(\Im(f_\lambda))_{ext}\Big\|_{L_p(L_\infty(\mathbb{R})\overline{\otimes}\mathcal{M})}\\
   &\leq \Big\|{\sup_{\lambda>0}}^+\Re(f_\lambda)\Big\|_{L_p(L_\infty(\mathbb{Z})\overline{\otimes}\mathcal{M})}+\Big\|{\sup_{\lambda>0}}^+\Im(f_\lambda)\Big\|_{L_p(L_\infty(\mathbb{Z})\overline{\otimes}\mathcal{M})}\\
   &\lesssim \Big\|{\sup_{\lambda>0}}^+f_\lambda\Big\|_{L_p(L_\infty(\mathbb{Z})\overline{\otimes}\mathcal{M})}+\Big\|{\sup_{\lambda>0}}^+f_\lambda^*\Big\|_{L_p(L_\infty(\mathbb{Z})\overline{\otimes}\mathcal{M})}\lesssim\Big\|{\sup_{\lambda>0}}^+f_\lambda\Big\|_{L_p(L_\infty(\mathbb{Z})\overline{\otimes}\mathcal{M})}.
 \end{align*}
 Moreover, by combining the arguments presented above with the same technique employed in Lemma \ref{extnorm}, we can derive the reverse inequality as well.
\end{proof}

Utilizing the lemmas previously established, we arrive at the following noncommutative sampling principle.
\begin{lemma}\label{sampling1}
Let $\{\phi_\lambda\}_{\lambda>0}$ be a sequence of smooth functions on $\mathbb{R}$ and $supp(\phi_\lambda)\subset(-\frac{1}{2},\frac{1}{2})$, $T_{\phi_\lambda}$ is the Fourier multiplier operator associated with symbol $\phi_\lambda$.  For $1\leq p\leq \infty$, if the estimate
\begin{equation}\label{sm1}
  \Big\|{\sup_{\lambda>0}}^+T_{\phi_\lambda}(f)\Big\|_{L_p(L_\infty(\mathbb{R})\overline{\otimes}\mathcal{M})}\lesssim\|f\|_{L_p(L_\infty(\mathbb{R})\overline{\otimes}\mathcal{M})}
\end{equation}
holds for all $f\in {L_p(L_\infty(\mathbb{R})\overline{\otimes}\mathcal{M})}$, then the following estimate
\begin{equation}\label{sm2}
  \Big\|{\sup_{\lambda>0}}^+(T_{\phi_{\lambda}})_{dis}(f)\Big\|_{L_p(L_\infty(\mathbb{Z})\overline{\otimes}\mathcal{M})}\lesssim\|f\|_{L_p(L_\infty(\mathbb{Z})\overline{\otimes}\mathcal{M})}
\end{equation}
holds for all $f\in {L_p(L_\infty(\mathbb{Z})\overline{\otimes}\mathcal{M})}$ , where $(T_{\phi_{\lambda}})_{dis}(f)(n):=\sum_{m\in \mathbb{Z}}f(n-m)\big(\phi_{\lambda}\big)^{\vee}(m)$.
\end{lemma}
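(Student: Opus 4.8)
The plan is to carry the classical sampling lemma of \cite{MSW} over to the noncommutative $\ell_\infty$-valued setting, using the extension map $f\mapsto f_{ext}$ together with Lemmas \ref{extnorm} and \ref{extnorm1} to pass between $\mathbb Z$ and $\mathbb R$. Fix $f\in L_p(L_\infty(\mathbb Z)\overline{\otimes}\mathcal M)$ (by density one may take $f$ finitely supported, and it is harmless to first work over finite subsets of the index set $(0,\infty)$) and set $g_\lambda:=(T_{\phi_\lambda})_{dis}(f)$. By Lemma \ref{extnorm1} it suffices to bound $\|{\sup_{\lambda>0}}^+(g_\lambda)_{ext}\|_{L_p(L_\infty(\mathbb R)\overline{\otimes}\mathcal M)}$ by $\|f\|_{L_p(L_\infty(\mathbb Z)\overline{\otimes}\mathcal M)}$, so the crux is to relate $(g_\lambda)_{ext}$ to $T_{\phi_\lambda}(f_{ext})$.

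First I would record the Fourier-side picture. Writing $\widehat f(\theta)=\sum_n f(n)e^{-2\pi i n\theta}$ for the $1$-periodic symbol of $f$, one has $\widehat{f_{ext}}(\theta)=\widehat\Phi(\theta)\widehat f(\theta)$ with $\widehat\Phi(\theta)=(1-|\theta|)_+$; since the sequence $(\phi_\lambda)^{\vee}(m)$ has symbol the periodization $\sum_{l\in\mathbb Z}\phi_\lambda(\cdot-l)$, the sequence $g_\lambda$ has symbol $(\sum_l\phi_\lambda(\cdot-l))\widehat f$, and hence $\widehat{(g_\lambda)_{ext}}=\widehat\Phi\cdot(\sum_l\phi_\lambda(\cdot-l))\cdot\widehat f$, a function supported in $[-1,1]$. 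Because $\mathrm{supp}\,\phi_\lambda\subset(-\tfrac12,\tfrac12)$, only the terms $l=0,\pm1$ survive on $[-1,1]$; unwinding the $l=\pm1$ pieces (using $\widehat\Phi(\pm1+u)=|u|$ for the relevant $u$, the $1$-periodicity of $\widehat f$, and $\widehat f=\widehat{f_{ext}}/(1-|\cdot|)$ on $(-\tfrac12,\tfrac12)$, legitimate since $1-|\theta|\ge\tfrac12$ there) yields the splitting
\[
(g_\lambda)_{ext}=T_{\phi_\lambda}(f_{ext})+M_{1}\,T_{\omega_-}T_{\phi_\lambda}(f_{ext})+M_{-1}\,T_{\omega_+}T_{\phi_\lambda}(f_{ext}),
\]
where $M_{\pm1}$ denotes modulation by $e^{\pm 2\pi i x}$ and $\omega_{\pm}$ are \emph{fixed} bounded multipliers obtained from $|u|/(1-|u|)$ by a smooth cutoff and restriction to a half-line. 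Each $\omega_{\pm}$ is continuous, compactly supported, and $C^{\infty}$ away from the single point $0$, where it has a Lipschitz corner; consequently its inverse Fourier transform decays like $|\xi|^{-2}$ at infinity, so $\omega_{\pm}^{\vee}\in L_1(\mathbb R)$ and $T_{\omega_{\pm}}$ is convolution with an $L_1$ kernel.

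It then remains to estimate the three summands in the $\ell_\infty$-valued norm. For the main term, hypothesis \eqref{sm1} combined with Lemma \ref{extnorm} gives $\|{\sup}^{+}_{\lambda}T_{\phi_\lambda}(f_{ext})\|_{L_p(L_\infty(\mathbb R)\overline{\otimes}\mathcal M)}\lesssim\|f_{ext}\|_{L_p}\sim\|f\|_{L_p(L_\infty(\mathbb Z)\overline{\otimes}\mathcal M)}$. For each edge term, modulation by the unimodular function $e^{\pm 2\pi i x}$ preserves the $\ell_\infty$-valued $L_p$ norm (absorb it into the middle factor of a factorization $x_\lambda=ay_\lambda b$), and since $T_{\omega_{\pm}}(h)=\omega_{\pm}^{\vee}*h$, the integral form of the triangle inequality for $L_p(\mathcal M;\ell_\infty)$ together with translation invariance gives $\|{\sup}^{+}_{\lambda}T_{\omega_{\pm}}T_{\phi_\lambda}(f_{ext})\|_{L_p}\le\|\omega_{\pm}^{\vee}\|_1\,\|{\sup}^{+}_{\lambda}T_{\phi_\lambda}(f_{ext})\|_{L_p}\lesssim\|f\|_{L_p(L_\infty(\mathbb Z)\overline{\otimes}\mathcal M)}$. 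Summing the three bounds and invoking Lemma \ref{extnorm1} once more yields \eqref{sm2}; the non-self-adjoint case reduces to the self-adjoint one via real and imaginary parts exactly as in the proof of Lemma \ref{extnorm1}.

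The decisive point, and where this departs from the classical commutative sampling lemma, is the presence of the factor $\widehat\Phi\neq1$. In \cite{MSW} the relevant multiplier sits in a neighbourhood of $0$ so small that $\widehat\Phi$ equals $1$ there, so $(g_\lambda)_{ext}=T_{\phi_\lambda}(f_{ext})$ with no correction; here one only knows $\mathrm{supp}\,\phi_\lambda\subset(-\tfrac12,\tfrac12)$, so $\widehat\Phi(\theta)=1-|\theta|$ genuinely varies. The saving grace is that it stays bounded below by $\tfrac12$ on $(-\tfrac12,\tfrac12)$, so dividing by it produces the harmless bounded symbols $\omega_{\pm}$ with summable kernels, while the spurious frequency copies near $\pm1$ created by the periodization are precisely the modulated terms $M_{\pm1}T_{\omega_{\pm}}T_{\phi_\lambda}(f_{ext})$. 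The remaining points — convergence of $f_{ext}$, the reductions to finitely supported $f$ and to finite index sets, and boundedness on $L_p(\mathcal M;\ell_\infty)$ of the convolution operators $T_{\phi_\lambda}$ (whose kernels $\phi_\lambda^{\vee}$ are Schwartz) — are routine and handled as in the proofs of Lemmas \ref{extnorm} and \ref{extnorm1}. Alternatively, one could bypass Lemma \ref{extnorm1} by sampling the band-limited functions $T_{\phi_\lambda}(f_{ext})$ back to $\mathbb Z$ through the reproducing identity $h(n)=\int_{\mathbb R}h(x)\Psi(n-x)\,dx$ from the proof of Lemma \ref{extnorm}, and then removing the leftover factor $\widehat\Phi$ by a single discrete Fourier multiplier whose symbol $1/\widehat\Phi$ on $\mathbb T$ lies in the Wiener algebra.
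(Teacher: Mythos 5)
Your argument is correct and uses the same skeleton as the paper's proof: reduce via Lemmas \ref{extnorm} and \ref{extnorm1} to a statement about $f_{ext}$ on $\mathbb{R}$, and then control the three surviving copies ($\ell=0,\pm1$) of $\phi_\lambda$ that the periodization produces on the support of $\widehat\Phi$. The difference is purely in the bookkeeping of those three terms. The paper takes the shorter route: it sets $\widetilde{\phi_\lambda}:=\sum_{|\ell|\le 1}\phi_\lambda(\cdot+\ell)$ and observes the \emph{exact} identity $T_{\widetilde{\phi_\lambda}}(f_{ext})=\big((T_{\phi_\lambda})_{dis}(f)\big)_{ext}$, after which the maximal bound for $T_{\widetilde{\phi_\lambda}}$ follows from hypothesis \eqref{sm1}, the triangle inequality, and the fact that $T_{\phi_\lambda(\cdot+\ell)}=M_{-\ell}T_{\phi_\lambda}M_{\ell}$ with $M_\ell$ a modulation, hence isometric on $L_p(\mathcal M;\ell_\infty)$. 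Your proof reaches the same identity at the Fourier-transform level (indeed $\widehat\Phi\cdot\sum_l\phi_\lambda(\cdot-l)=\widehat\Phi\cdot\widetilde{\phi_\lambda}$ pointwise), but then re-expresses the $\ell=\pm1$ pieces in terms of $T_{\phi_\lambda}(f_{ext})$, which forces you to divide and multiply by $\widehat\Phi$ and introduces the ratio multipliers $\omega_\pm\sim |u|/(1-|u|)$, whose $L_1$ kernels you then have to justify. That step is valid (the corner of $|u|$ gives $|\xi|^{-2}$ decay, and $1-|u|\ge 1/2$ on $\mathrm{supp}\,\phi_\lambda$), but it is extra work the paper avoids by treating each shifted copy $\phi_\lambda(\cdot+\ell)$ as a modulation of $\phi_\lambda$ rather than as a perturbation of the $\ell=0$ term. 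In short: same approach, with a slightly heavier decomposition on your side; if you replace the $\omega_\pm$ device by the direct observation $T_{\phi_\lambda(\cdot+\ell)}=M_{-\ell}T_{\phi_\lambda}M_{\ell}$, your proof collapses to the paper's.
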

\begin{proof}

  Define a function $\widetilde{\phi_\lambda}$ on $\mathbb{R}$ by
\begin{equation*}
  \widetilde{\phi_\lambda}(\xi):=\sum_{|\ell|\leq 1}\phi_\lambda(\xi+\ell),
\end{equation*}
and let $T_{\widetilde{\phi_\lambda}}$ be the Fourier multiplier operator associated with $\widetilde{\phi_\lambda}$. By applying the estimate \eqref{sm1} and the triangle inequality, we obtain
\begin{equation}\label{sm3}
  \Big\|{\sup_{\lambda>0}}^+T_{\widetilde{\phi_\lambda}}(f)\Big\|_{L_p(L_\infty(\mathbb{R})\overline{\otimes}\mathcal{M})}\leq\sum_{|\ell|\leq 1}\Big\|{\sup_{\lambda>0}}^+T_{\phi_\lambda}(f)\Big\|_{L_p(L_\infty(\mathbb{R})\overline{\otimes}\mathcal{M})}\lesssim \|f\|_{L_p(L_\infty(\mathbb{R})\overline{\otimes}\mathcal{M})}.
\end{equation}

Furthermore, one can verify that (see \cite[pages 194-195]{MSW})
\begin{equation}\label{sm4}
  T_{\widetilde{\phi_\lambda}}(f_{ext})=\big((T_{\phi_{\lambda}})_{dis}(f)\big)_{ext}.
\end{equation}
Employing Lemmas \ref{extnorm} and \ref{extnorm1} along with the inequality \eqref{sm3} and equality \eqref{sm4}, we deduce
\begin{align*}
 \Big\|{\sup_{\lambda>0}}^+(T_{\phi_{\lambda}})_{dis}(f)\Big\|_{L_p(L_\infty(\mathbb{Z})\overline{\otimes}\mathcal{M})}&\lesssim\Big\|{\sup_{\lambda>0}}^+\big((T_{\phi_{\lambda}})_{dis}(f)\big)_{ext}\Big\|_{L_p(L_\infty(\mathbb{R})\overline{\otimes}\mathcal{M})}\\
 &= \Big\|{\sup_{\lambda>0}}^+T_{\widetilde{\phi_\lambda}}(f_{ext})\Big\|_{L_p(L_\infty(\mathbb{R})\overline{\otimes}\mathcal{M})}\\
 &\lesssim \|f_{ext}\|_{L_p(L_\infty(\mathbb{R})\overline{\otimes}\mathcal{M})}\lesssim \|f\|_{L_p(L_\infty(\mathbb{Z})\overline{\otimes}\mathcal{M})}.
\end{align*}
\end{proof}

To effectively apply the sampling principle in our context, we require a modified version of the noncommutative sampling principle.
Let $q$ be a positive integer, and define $\alpha$ as the isomorphism from $q\mathbb{Z}\times \mathbb{Z}/q\mathbb{Z}$ to $\mathbb{Z}$, where
\begin{equation*}
  \alpha(n_1q,n_2):=n_1q+n_2, \quad n_1\in \mathbb{Z},\ 0\leq n_2<q.
\end{equation*}
Now, given an operator-valued function $f$ mapping from $\mathbb{Z}$ to a semifinite von Neumann algebra $\mathcal{M}$, we introduce the associated function $\alpha(f)$ which maps from
$q\mathbb{Z}$ to $L_\infty(\mathbb{Z}/q\mathbb{Z})\overline{\otimes}\mathcal{M}$ as follows,
\begin{equation*}
 \big((\alpha(f))(n_1q)\big)(n_2):=f(n_1q+n_2), \quad n_1\in \mathbb{Z},\ 0\leq n_2<q.
\end{equation*}
Furthermore, we define a function $\sigma(f)$ mapping from $q\mathbb{Z}$ to $\mathcal{M}$ according to
\begin{equation*}
  (\sigma(f))(nq):=f(n), \quad n\in\mathbb{Z}.
\end{equation*}
Note that
\begin{equation*}
 \tau\Big(\sum_{n\in\mathbb{Z}}\big|\sigma(f)(nq)\big|^p\Big) =\tau\Big(\sum_{n\in\mathbb{Z}}\big|f(n)\big|^p\Big)=\tau\Big(\sum_{n_1\in\mathbb{Z}}\sum_{n_2=0}^{q-1}\big|f(n_1q+n_2)\big|^p\Big),
\end{equation*}
we have
\begin{align}\label{slastt}
  \|\sigma(f)\|_{L_p(L_\infty(q\mathbb{Z})\overline{\otimes}\mathcal{M})}^p=\|f\|_{L_p(L_\infty(\mathbb{Z})\overline{\otimes}\mathcal{M})}^p
 =\|\alpha(f)\|_{L_p\big(L_\infty(q\mathbb{Z})\overline{\otimes}L_\infty(\mathbb{Z}/q\mathbb{Z})\overline{\otimes}\mathcal{M}\big)}^p .
 %&=\|\gamma(f)\|_{L_p\big(L_\infty(\mathbb{Z})\overline{\otimes}L_\infty(\mathbb{Z}/q\mathbb{Z})\overline{\otimes}\mathcal{M}\big)}^p.
\end{align}
Furthermore, we establish the following crucial equivalence of the maximal norms.
\begin{lemma}\label{slast1}
 Let $\{f_\lambda\}_{\lambda>0}\in L_p(L_\infty(\mathbb{Z})\overline{\otimes}\mathcal{M};\ell_\infty)$, then, the following holds,
 \begin{align*}
 \Big\|{\sup_{\lambda>0}}^+f_\lambda\Big\|_{L_{p}(L_\infty(\mathbb{Z})\overline{\otimes}\mathcal{M})}&\sim
  \Big\|{\sup_{\lambda>0}}^+\alpha(f_\lambda)\Big\|_{L_p\big(L_\infty(q\mathbb{Z})\overline{\otimes}L_\infty(\mathbb{Z}/q\mathbb{Z})\overline{\otimes}\mathcal{M}\big)}\\
  &\sim\Big\|{\sup_{\lambda>0}}^+\sigma(f_\lambda)\Big\|_{L_p\big(L_\infty(q\mathbb{Z})\overline{\otimes}\mathcal{M}\big)}.
\end{align*}
\end{lemma}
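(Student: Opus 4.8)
The plan is to realize both $\alpha$ and $\sigma$ as trace-preserving normal $*$-isomorphisms of von Neumann algebras and then to invoke the general principle that such isomorphisms preserve the noncommutative $\ell_\infty$-valued $L_p$ norm; consequently the two $\sim$'s in the statement will in fact be equalities, with \eqref{slastt} being the $\ell_\infty$-free instance of the same fact.

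First I would verify that $f\mapsto\alpha(f)$ and $f\mapsto\sigma(f)$ are normal $*$-isomorphisms. Both amount merely to relabelings of the atomic measure space $\mathbb{Z}$: the correspondence $n_1q+n_2\leftrightarrow(n_1q,n_2)$ with $n_1\in\mathbb{Z}$, $0\le n_2<q$ is a bijection of $\mathbb{Z}$ onto $q\mathbb{Z}\times\mathbb{Z}/q\mathbb{Z}$, while $n\mapsto nq$ is a bijection of $\mathbb{Z}$ onto $q\mathbb{Z}$. Transporting the pointwise product and the pointwise adjoint of $L_\infty(\mathbb{Z})\overline{\otimes}\mathcal{M}$ along these bijections shows immediately that $\alpha$ is a $*$-isomorphism of $L_\infty(\mathbb{Z})\overline{\otimes}\mathcal{M}$ onto $L_\infty(q\mathbb{Z})\overline{\otimes}L_\infty(\mathbb{Z}/q\mathbb{Z})\overline{\otimes}\mathcal{M}$ and $\sigma$ is a $*$-isomorphism of $L_\infty(\mathbb{Z})\overline{\otimes}\mathcal{M}$ onto $L_\infty(q\mathbb{Z})\overline{\otimes}\mathcal{M}$, each with the evident inverse. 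Normality is automatic since the base spaces are atomic, and trace invariance is precisely \eqref{slastt}; hence $\alpha$ and $\sigma$ extend to surjective isometries on the associated $L_p$-spaces for all $1\le p\le\infty$.

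Next I would record the functoriality step: a normal trace-preserving $*$-isomorphism $\pi\colon(\mathcal{N}_1,\tau_1)\to(\mathcal{N}_2,\tau_2)$ preserves the maximal norm. It is enough to treat self-adjoint families, since---exactly as in the proof of Lemma \ref{extnorm1}---the general case follows by decomposing $f_\lambda$ into its real and imaginary parts, using that $\pi$ commutes with $\Re$ and $\Im$ and that $\|{\sup_\lambda}^+f_\lambda^*\|_p\sim\|{\sup_\lambda}^+f_\lambda\|_p$. For self-adjoint $(f_\lambda)$, recall from Section \ref{pre} that $\|{\sup_\lambda}^+f_\lambda\|_p=\inf\{\|a\|_p:\ a\in L_p(\mathcal{N}_1)_+,\ -a\le f_\lambda\le a\ \forall\lambda\}$. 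Since $\pi$ is an order isomorphism extending to an isometry, $-a\le f_\lambda\le a$ forces $-\pi(a)\le\pi(f_\lambda)\le\pi(a)$ with $\|\pi(a)\|_p=\|a\|_p$, giving $\|{\sup_\lambda}^+\pi(f_\lambda)\|_p\le\|{\sup_\lambda}^+f_\lambda\|_p$; applying the same argument to $\pi^{-1}$ yields the reverse bound. (One could equally well push the factorization $f_\lambda=ay_\lambda b$ through $\pi$ and use that $\pi$ is isometric on $L_{2p}$ and on $L_\infty$.)

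Applying this with $\pi=\alpha$ and with $\pi=\sigma$ then gives the chain of equalities claimed in the lemma. I do not expect a genuine obstacle here: the statement is essentially bookkeeping, and the only point deserving attention is the careful verification that $\alpha$ and $\sigma$ really are normal, trace-preserving $*$-isomorphisms---so that the $L_p(\mathcal{N};\ell_\infty)$-functoriality can be applied---which is routine in view of \eqref{slastt}.
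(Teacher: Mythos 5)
Your proposal is correct, and the underlying idea is the same as the paper's: $\alpha$ and $\sigma$ are nothing but trace-preserving $*$-isomorphisms coming from bijective relabelings of the atomic base space, and any such isomorphism preserves the $L_p(\mathcal{M};\ell_\infty)$ norm. The paper realizes this in the most direct way possible, which is exactly your parenthetical alternative: take a near-optimal factorization $f_\lambda = a F_\lambda b$, observe $\alpha(f_\lambda) = \alpha(a)\alpha(F_\lambda)\alpha(b)$, and use that $\alpha$ is isometric on $L_{2p}$ and on $L_\infty$ (this is \eqref{slastt}); then run the same argument with $\alpha^{-1}$ and with $\sigma$. This yields actual equalities of the maximal norms in one stroke. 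Your main line of argument instead reduces to self-adjoint families via the $\Re/\Im$ decomposition and then uses the order characterization $\|{\sup_\lambda}^+ f_\lambda\|_p = \inf\{\|a\|_p : -a \le f_\lambda \le a\}$ together with the fact that a $*$-isomorphism is an order isomorphism. That also works, but it is a slight detour: the $\Re/\Im$ reduction only gives an equivalence up to a constant rather than an equality, whereas the factorization argument needs no reduction at all and gives the sharper conclusion directly. Your framing of the step as a general functoriality principle for trace-preserving normal $*$-isomorphisms is a cleaner way to state what is going on, but the content is the same as the paper's computation.
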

\begin{proof}Let $\epsilon>0$ be arbitrary. Since $\{f_\lambda\}_{\lambda>0}\in L_p(L_\infty(\mathbb{Z})\overline{\otimes}\mathcal{M};\ell_\infty)$, there exists $a,b,F_\lambda$, such that
$f_\lambda=aF_\lambda b$, and
\begin{equation*}
  \|a\|_{L_{2p}(L_\infty(\mathbb{Z})\overline{\otimes}\mathcal{M})}{\sup_{\lambda>0}}\|F_\lambda\|_{L_\infty(\mathbb{Z})\overline{\otimes}\mathcal{M}}\|b\|_{L_{2p}(L_\infty(\mathbb{Z})\overline{\otimes}\mathcal{M})}
\leq \Big\|{\sup_{\lambda>0}}^+f_\lambda\Big\|_{L_{p}(L_\infty(\mathbb{Z})\overline{\otimes}\mathcal{M})}+\epsilon
\end{equation*}
Hence $\alpha(f_\lambda)=\alpha(a)\alpha(F_\lambda)\alpha(b)$, then we have
\begin{align*}
\Big\|{\sup_{\lambda>0}}^+\alpha(f_\lambda)\Big\|_{L_p\big(L_\infty(q\mathbb{Z})\overline{\otimes}L_\infty(\mathbb{Z}/q\mathbb{Z})\overline{\otimes}\mathcal{M}\big)}&\leq\|\alpha(a)\|_{2p}{\sup_{\lambda>0}}\|\alpha(F_\lambda)\|_\infty\|\alpha(b)\|_{2p}\nonumber\\
&\leq \|a\|_{L_{2p}(L_\infty(\mathbb{Z})\overline{\otimes}\mathcal{M})}{\sup_{\lambda>0}}\|F_\lambda\|_{L_\infty(\mathbb{Z})\overline{\otimes}\mathcal{M}}\|b\|_{L_{2p}(L_\infty(\mathbb{Z})\overline{\otimes}\mathcal{M})}
\nonumber\\
& \leq\Big\|{\sup_{\lambda>0}}^+f_\lambda\Big\|_{L_{p}(L_\infty(\mathbb{Z})\overline{\otimes}\mathcal{M})}+\epsilon.
\end{align*}
Let $\epsilon\rightarrow0$, we have
\begin{equation*}
  \Big\|{\sup_{\lambda>0}}^+\alpha(f_\lambda)\Big\|_{L_p\big(L_\infty(q\mathbb{Z})\overline{\otimes}L_\infty(\mathbb{Z}/q\mathbb{Z})\overline{\otimes}\mathcal{M}\big)}\leq
\Big\|{\sup_{\lambda>0}}^+f_\lambda\Big\|_{L_{p}(L_\infty(\mathbb{Z})\overline{\otimes}\mathcal{M})}.
\end{equation*}
Via a similar argument, we can deduce the inverse inequality
\begin{equation*}
 \Big\|{\sup_{\lambda>0}}^+f_\lambda\Big\|_{L_{p}(L_\infty(\mathbb{Z})\overline{\otimes}\mathcal{M})}\leq
  \Big\|{\sup_{\lambda>0}}^+\alpha(f_\lambda)\Big\|_{L_p\big(L_\infty(q\mathbb{Z})\overline{\otimes}L_\infty(\mathbb{Z}/q\mathbb{Z})\overline{\otimes}\mathcal{M}\big)}.
\end{equation*}
Therefore
\begin{equation*}
 \Big\|{\sup_{\lambda>0}}^+f_\lambda\Big\|_{L_{p}(L_\infty(\mathbb{Z})\overline{\otimes}\mathcal{M})}\sim
  \Big\|{\sup_{\lambda>0}}^+\alpha(f_\lambda)\Big\|_{L_p\big(L_\infty(q\mathbb{Z})\overline{\otimes}L_\infty(\mathbb{Z}/q\mathbb{Z})\overline{\otimes}\mathcal{M}\big)}.
\end{equation*}
Moreover, we also have
\begin{equation*}
 \Big\|{\sup_{\lambda>0}}^+f_\lambda\Big\|_{L_{p}(L_\infty(\mathbb{Z})\overline{\otimes}\mathcal{M})}\sim
  \Big\|{\sup_{\lambda>0}}^+\sigma(f_\lambda)\Big\|_{L_p\big(L_\infty(q\mathbb{Z})\overline{\otimes}\mathcal{M}\big)}.
\end{equation*}
\end{proof}
%\begin{remark}
%  Given a $\mathcal{M}$-valued function $f$ on $q\mathbb{Z}$, we can define a function $\sigma^{-1}(f)$ on $\mathbb{Z}$, such that
%
%\end{remark}

%and
%\begin{equation}\label{slast3}
% \Big\|\sup_{t>0}f_t\Big\|_{L_{p}(L_\infty(\mathbb{Z})\overline{\otimes}\mathcal{M})}\sim
%  \Big\|\sup_{t>0}\gamma(f_t)\Big\|_{L_p\big(L_\infty(\mathbb{Z})\overline{\otimes}L_\infty(\mathbb{Z}/q\mathbb{Z})\overline{\otimes}\mathcal{M}\big)}.
%\end{equation}
%\begin{remark}
%  Indeed, one can see that $\alpha(f)=\sigma(\gamma(f)).$
%\end{remark}
%\begin{align*}
%  \Big\|\sup_{t>0}(T_{\phi_t}^q)_{dis}(f)\Big\|_{L_{p}(L_\infty(\mathbb{Z})\overline{\otimes}\mathcal{M})}\leq
%  \Big\|\sup_{t>0}\alpha((T_{\phi_t}^q)_{dis}(f))\Big\|_{L_p\big(L_\infty(q\mathbb{Z})\overline{\otimes}L_\infty(\mathbb{Z}/q\mathbb{Z})\overline{\otimes}\mathcal{M}\big)}
% \end{align*}
%Given a $\mathcal{M}$- valued function $g$ on $q\mathbb{Z}\times \mathbb{Z}/q\mathbb{Z}$, we define a $L_\infty(\mathbb{Z}/q\mathbb{Z})\overline{\otimes}\mathcal{M}$-valued function $\sigma(g)$ on $q\mathbb{Z}$ by
%\begin{equation*}
% \big(( \sigma(g))(n_1 q)\big)(n_2):=g(n_1q, n_2), \quad n_1\in \mathbb{Z},\ 0\leq n_2<q.
%\end{equation*}
Now, consider a smooth function $\phi$ on $\mathbb{R}$ that is supported on $(-\frac{1}{2q},\frac{1}{2q})$. One can define $\phi_{per}^q$ as
\begin{equation*}
 \phi_{per}^q(\xi):=\sum_{n\in\mathbb{Z}}\phi(\xi-n/q).
\end{equation*}
Let $(T_\phi^q)_{dis}$ be the Fourier multiplier operator associated with symbol $\phi_{per}^q$, which means that for suitable $f$, we have
\begin{equation}\label{Tdef}
 \sum_{n\in\mathbb{Z}}((T_\phi^q)_{dis}(f))(n)e^{-2\pi in\xi}=\phi_{per}^q(\xi)\sum_{n\in\mathbb{Z}}f(n)e^{-2\pi in\xi}.
\end{equation}
To understand the operator better, let us determine its kernel $K_{dis}^q$. We compute
\begin{align*}
  K_{dis}^q(m)&=\int_{(-\frac{1}{2},\frac{1}{2})}\Big(\sum_{n\in\mathbb{Z}}\phi(\xi-n/q)\Big)e^{2\pi im\xi}d\xi\\
  &=\sum_{n\in\mathbb{Z}}\int_{(-\frac{1}{2}-\frac{n}{q},\frac{1}{2}-\frac{n}{q})}\phi(\xi)e^{2\pi im\xi}d\xi e^{2\pi i\frac{mn}{q}}\\
  &=\sum_{n\in\mathbb{Z}}\int_{(-\frac{1}{2}-\frac{n}{q},\frac{1}{2}-\frac{n}{q})\cap(-\frac{1}{2q}.\frac{1}{2q})}\phi(\xi)e^{2\pi im\xi}d\xi e^{2\pi i\frac{mn}{q}}\\
  &=\sum_{n\in\mathbb{Z},|n|<\frac{q+1}{2}}\int_{(-\frac{1}{2}-\frac{n}{q},\frac{1}{2}-\frac{n}{q})\cap(-\frac{1}{2q}.\frac{1}{2q})}\phi(\xi)e^{2\pi im\xi}d\xi e^{2\pi i\frac{mn}{q}}
\end{align*}
Observing the above expression, one can find that $K_{dis}^q(m)=0$ if $m$ is not divisible by $q$, and  $K_{dis}^q(m)=q\check{\phi}(m)$ if $q$ divides $m$.
Hence
\begin{equation}\label{slast3}
  ((T_\phi^q)_{dis}(f))(n)=q\sum_{m\in \mathbb{Z}}f(n-mq)(\phi)^\vee(mq).
\end{equation}

\begin{lemma}\label{sampling2}
Let $\{\phi_\lambda\}_{\lambda>0}$ be a sequence of smooth functions on $\mathbb{R}$ and $supp(\phi_\lambda)\subset(-\frac{1}{2},\frac{1}{2})$, $T_{\phi_\lambda}$ is the Fourier multiplier operator with the multiplier function $\phi_\lambda$. For $1\leq p\leq \infty$, if the estimate
\begin{equation}\label{sm21}
  \Big\|{\sup_{\lambda>0}}^+T_{\phi_\lambda}(g)\Big\|_{L_p\big(L_\infty(\mathbb{R})\overline{\otimes}\mathcal{M}\big)}
  \lesssim\|g\|_{L_p\big(L_\infty(\mathbb{R})\overline{\otimes}\mathcal{M}\big)}
\end{equation}
holds for all $g\in {L_p\big(L_\infty(\mathbb{R})\overline{\otimes}\mathcal{M}\big)}$ and arbitrary semifinite von Neumann algebra $\mathcal{M}$. Then the following estimate
\begin{equation}\label{sm22}
  \Big\|{\sup_{\lambda>0}}^+(T_{\phi_{\lambda,q}}^q)_{dis}(f)\Big\|_{L_p\big(L_\infty(\mathbb{Z})\overline{\otimes}\mathcal{M}\big)}
  \lesssim\|f\|_{L_p\big(L_\infty(\mathbb{Z})\overline{\otimes}\mathcal{M}\big)}
\end{equation}
holds for all $f\in L_p\big(L_\infty(\mathbb{Z})\overline{\otimes}\mathcal{M}\big)$ and arbitrary semifinite von Neumann algebra $\mathcal{M}$, where $\phi_{\lambda,q}(\xi):=\phi_\lambda(q\xi)$.
\end{lemma}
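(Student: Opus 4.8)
The plan is to reduce the estimate \eqref{sm22} to the continuous estimate \eqref{sm21} by combining the ``unfolding'' isomorphisms $\alpha$ and $\sigma$ from Lemma \ref{slast1} with the basic sampling principle Lemma \ref{sampling1}. The first step is to identify the discrete operator $(T^q_{\phi_{\lambda,q}})_{dis}$ concretely. Since $\phi_{\lambda,q}(\xi)=\phi_\lambda(q\xi)$ is supported in $(-\tfrac{1}{2q},\tfrac{1}{2q})$, formula \eqref{slast3} applies and gives
\begin{equation*}
  \big((T^q_{\phi_{\lambda,q}})_{dis}(f)\big)(n)=q\sum_{m\in\mathbb{Z}}f(n-mq)(\phi_{\lambda,q})^\vee(mq).
\end{equation*}
A change of variables in the inverse Fourier transform shows $(\phi_{\lambda,q})^\vee(mq)=q^{-1}(\phi_\lambda)^\vee(m)$, so the right-hand side equals $\sum_{m\in\mathbb{Z}}f(n-mq)(\phi_\lambda)^\vee(m)$. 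In other words, $(T^q_{\phi_{\lambda,q}})_{dis}$ acts on $f$ exactly like the operator $(T_{\phi_\lambda})_{dis}$ of Lemma \ref{sampling1}, but with the lattice $\mathbb{Z}$ replaced by $q\mathbb{Z}$: writing $h=\sigma(f)$, one checks that $(T^q_{\phi_{\lambda,q}})_{dis}(f)=\sigma\big((T_{\phi_\lambda})_{dis,q\mathbb{Z}}(h)\big)$ where the subscript $q\mathbb{Z}$ indicates the discrete multiplier operator built from the same symbol $\phi_\lambda$ but over $q\mathbb{Z}\cong\mathbb{Z}$.

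The second step is then purely formal. By Lemma \ref{slast1}, the maximal norm $\big\|{\sup_{\lambda>0}}^+(T^q_{\phi_{\lambda,q}})_{dis}(f)\big\|_{L_p(L_\infty(\mathbb{Z})\overline{\otimes}\mathcal{M})}$ is equivalent to $\big\|{\sup_{\lambda>0}}^+\sigma\big((T^q_{\phi_{\lambda,q}})_{dis}(f)\big)\big\|_{L_p(L_\infty(q\mathbb{Z})\overline{\otimes}\mathcal{M})}$, and similarly $\|f\|_{L_p(L_\infty(\mathbb{Z})\overline{\otimes}\mathcal{M})}\sim\|\sigma(f)\|_{L_p(L_\infty(q\mathbb{Z})\overline{\otimes}\mathcal{M})}$ by \eqref{slastt}. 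Since $q\mathbb{Z}$ is isomorphic to $\mathbb{Z}$ as an abelian group with the counting measure (just rescale by $q$), the desired inequality \eqref{sm22} becomes precisely \eqref{sm2} of Lemma \ref{sampling1} applied on $\mathbb{Z}$ to the family of symbols $\{\phi_\lambda\}_{\lambda>0}$ and the von Neumann algebra $\mathcal{M}$. Thus \eqref{sm22} follows from Lemma \ref{sampling1}, which in turn follows from the hypothesis \eqref{sm21} (note \eqref{sm1} is \eqref{sm21}).

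The main technical point to get right — and the step I expect to require the most care — is the bookkeeping in the first step: verifying that the scaling $\phi_{\lambda,q}(\xi)=\phi_\lambda(q\xi)$, the periodization over $q^{-1}\mathbb{Z}$ built into $\phi^q_{per}$, and the restriction of the kernel to multiples of $q$ all interlock so that $(T^q_{\phi_{\lambda,q}})_{dis}$ on $\mathbb{Z}$ transports, via $\sigma$, to the plain discrete multiplier $(T_{\phi_\lambda})_{dis}$ on $q\mathbb{Z}$ without any spurious constants or aliasing. Equivalently, one must confirm that the symbol $(\phi_{\lambda,q})^q_{per}$ on $\mathbb{T}=[0,1)$ pulls back under $\xi\mapsto q\xi$ (mod $1$) to the symbol $\widetilde{\phi_\lambda}$ appearing in the proof of Lemma \ref{sampling1}, so that applying the multiplier $(\phi_{\lambda,q})^q_{per}$ on functions over $\mathbb{Z}$ is the same as applying $\widetilde{\phi_\lambda}$ on the ``$q$-dilated'' picture; once this identification is in place, Lemmas \ref{extnorm}, \ref{extnorm1}, \ref{sampling1} and \ref{slast1} chain together mechanically. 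Everything else — positivity reductions, splitting into real and imaginary parts, passing $\epsilon\to0$ — is handled exactly as in the preceding lemmas and introduces no new difficulty.
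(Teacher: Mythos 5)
The plan you announce at the start --- combine $\alpha$, $\sigma$, and Lemma~\ref{sampling1} --- is the right one, and it is exactly what the paper does. But in the execution you drop $\alpha$ and try to get by with $\sigma$ alone, and this is where the argument breaks. After the first step you correctly obtain
\begin{equation*}
  \big((T^q_{\phi_{\lambda,q}})_{dis}(f)\big)(n)=\sum_{m\in\mathbb{Z}}f(n-mq)\,(\phi_\lambda)^\vee(m).
\end{equation*}
Now check your claimed transport identity. Since $\sigma(f)(nq)=f(n)$, you have
\begin{equation*}
  \sigma\big((T^q_{\phi_{\lambda,q}})_{dis}(f)\big)(nq)=\sum_{m}f(n-mq)\,(\phi_\lambda)^\vee(m)
  =\sum_{m}\sigma(f)\big((n-mq)q\big)\,(\phi_\lambda)^\vee(m),
\end{equation*}
which is a convolution of $\sigma(f)$ on $q\mathbb{Z}$ against a kernel supported on $\{mq^2:m\in\mathbb{Z}\}$, not on $\{mq\}$. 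By contrast, $(T_{\phi_\lambda})_{dis}$ transported to $q\mathbb{Z}$ (via the rescaling $nq\leftrightarrow n$) acts by $\sum_m\sigma(f)\big((n-m)q\big)(\phi_\lambda)^\vee(m)$. The two are different: one sees $f(n-mq)$, the other $f(n-m)$. So your identity $(T^q_{\phi_{\lambda,q}})_{dis}(f)=\sigma\big((T_{\phi_\lambda})_{dis,q\mathbb{Z}}(\sigma(f))\big)$ is false, and the ``purely formal'' second step does not go through.

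The structural reason $\sigma$ alone cannot work is that $(T^q_{\phi_{\lambda,q}})_{dis}$ respects the partition of $\mathbb{Z}$ into residue classes mod $q$ --- it convolves within each class --- and $\sigma$ does not separate the classes. This is precisely what $\alpha$ does: $\alpha(f)(n_1q)(n_2)=f(n_1q+n_2)$ turns $f$ into a function on $q\mathbb{Z}$ valued in $L_\infty(\mathbb{Z}/q\mathbb{Z})\overline{\otimes}\mathcal{M}$, and one checks (this is the paper's computation (3.12))
\begin{equation*}
  \sigma^{-1}\big(\alpha\big((T^q_{\phi_{\lambda,q}})_{dis}(f)\big)\big)=(T_{\phi_\lambda})_{dis}\big(\sigma^{-1}(\alpha(f))\big).
\end{equation*}
Then Lemma~\ref{sampling1} is applied with the von Neumann algebra $L_\infty(\mathbb{Z}/q\mathbb{Z})\overline{\otimes}\mathcal{M}$ in place of $\mathcal{M}$ --- this is exactly why the hypothesis demands \eqref{sm21} for \emph{arbitrary} semifinite $\mathcal{M}$, a feature your argument never invokes, which is itself a signal that something was lost. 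Restoring the $\alpha$-unfolding and enlarging the algebra makes the proof correct and reproduces the paper's argument.
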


\begin{proof}
Fix a semifinite von Neumann algebra $\mathcal{M}$, given $f\in L_p\big(L_\infty(\mathbb{Z})\overline{\otimes}\mathcal{M}\big)$.
Utilizing equality \eqref{slast3}, we derive the following equations
\begin{align*}
  \big(\alpha((T_{\phi_{\lambda,q}}^q)_{dis}(f))(n_1 q)\big)(n_2)&=\big((T_{\phi_{\lambda,q}}^q)_{dis}(f)\big)(n_1q+n_2)\\
  &=q\sum_{m\in \mathbb{Z}}f(n_1q+n_2-mq)(\phi_{\lambda,q})^\vee(mq)\\
  &=q\sum_{m\in \mathbb{Z}}\big((\alpha(f))((n_1-m) q)\big)(n_2)(\phi_{\lambda,q})^\vee(mq)\\
  &=\sum_{m\in \mathbb{Z}}\big((\sigma^{-1}(\alpha(f)))(n_1-m)\big)(n_2)(\phi_\lambda)^\vee(m).
\end{align*}
This implies that
\begin{equation}\label{slastt1}
  \sigma^{-1}(\alpha((T_{\phi_\lambda}^q)_{dis}(f)))(n_1)=\alpha((T_{\phi_\lambda}^q)_{dis}(f))(n_1q)=(T_{\phi_\lambda})_{dis}(\sigma^{-1}(\alpha(f)))(n_1).
\end{equation}
Here, $\sigma^{-1}$ denotes the map such that $\sigma^{-1}\circ\sigma=id$.
By invoking inequality \eqref{sm21},  Lemma \ref{sampling1} implies the following inequality
\begin{equation}\label{slastt2}
  \Big\|{\sup_{\lambda>0}}^+(T_{\phi_\lambda})_{dis}(g)\Big\|_{L_p\big(L_\infty(\mathbb{Z})\overline{\otimes}L_\infty(\mathbb{Z}/q\mathbb{Z})\overline{\otimes}\mathcal{M}\big)}
  \lesssim\|g\|_{L_p\big(L_\infty(\mathbb{Z})\overline{\otimes}L_\infty(\mathbb{Z}/q\mathbb{Z})\overline{\otimes}\mathcal{M}\big)}
\end{equation}
holds for all $g\in L_\infty(\mathbb{Z})\overline{\otimes}L_\infty(\mathbb{Z}/q\mathbb{Z})\overline{\otimes}\mathcal{M}$.
Finally, by combining the inequalities \eqref{slastt}, \eqref{slastt1}, \eqref{slastt2}, along with Lemma \ref{slast1}, we obtain
\begin{align*}
  \Big\|{\sup_{\lambda>0}}^+(T_{\phi_{\lambda,q}}^q)_{dis}(f)\Big\|_{L_p\big(L_\infty(\mathbb{Z})\overline{\otimes}\mathcal{M}\big)}
  &\sim\Big\|{\sup_{\lambda>0}}^+\big(\alpha((T_{\phi_{\lambda,q}}^q)_{dis}(f))\Big\|_{L_p\big(L_\infty(q\mathbb{Z})\overline{\otimes}L_\infty(\mathbb{Z}/q\mathbb{Z})\overline{\otimes}\mathcal{M}\big)}\\
   &\sim\Big\|{\sup_{\lambda>0}}^+\sigma^{-1}(\alpha((T_{\phi_\lambda}^q)_{dis}(f)))\Big\|_{L_p\big(L_\infty(\mathbb{Z})\overline{\otimes}L_\infty(\mathbb{Z}/q\mathbb{Z})\overline{\otimes}\mathcal{M}\big)}\\
  &=\Big\|{\sup_{\lambda>0}}^+(T_{\phi_\lambda})_{dis}(\sigma^{-1}(\alpha(f)))\Big\|_{L_p\big(L_\infty(\mathbb{Z})\overline{\otimes}L_\infty(\mathbb{Z}/q\mathbb{Z})\overline{\otimes}\mathcal{M}\big)}\\
  &\lesssim\Big\|\sigma^{-1}(\alpha(f))\Big\|_{L_p\big(L_\infty(\mathbb{Z})\overline{\otimes}L_\infty(\mathbb{Z}/q\mathbb{Z})\overline{\otimes}\mathcal{M}\big)}\\
  &\lesssim\Big\|\alpha(f)\Big\|_{L_p\big(L_\infty(q\mathbb{Z})\overline{\otimes}L_\infty(\mathbb{Z}/q\mathbb{Z})\overline{\otimes}\mathcal{M}\big)}\\
  &\lesssim\Big\|f\Big\|_{L_p\big(L_\infty(\mathbb{Z})\overline{\otimes}\mathcal{M}\big)}.
\end{align*}
\end{proof}

\section{Proof of the main result}\label{last}
In this section, $\mathcal{N}$ denotes a semifinite von Neumann algebra, fixed as per the notation employed in Theorem \ref{thm1}.

\subsection{Some lemmas}
The lemmas presented in this subsection are crucial to our ultimate goal of deriving the noncommutative maximal ergodic inequality. These lemmas provide key estimates and properties that will be used in subsequent derivations.
\begin{lemma}\label{kt}
Let $k(x):=t^{-1}x^{1/t-1}\chi_{[0,1]}(x)$, where $t>0$. For $1<p<\infty$, we have the following inequality,
\begin{equation*}
  \Big\|{\sup_{\eta>0}}^+f*k_\eta\Big\|_{L_p(L_\infty(\mathbb{R})\overline{\otimes}{\mathcal{M}})}\leq C\|f\|_{L_p(L_\infty(\mathbb{R})\overline{\otimes}{\mathcal{M}})},
\end{equation*}
where $k_\eta(x)=\frac{1}{\eta}k\big(\frac{x}{\eta}\big)$ and ${\mathcal{M}}$ is an arbitrary semifinite von Neumann algebra. The constant $C$ only depends on $p$ and $t$.
\end{lemma}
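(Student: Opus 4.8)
The plan is to dominate, in the operator order, the maximal operator $\sup_{\eta>0}^+ f*k_\eta$ by a constant multiple of the symmetric Hardy--Littlewood maximal operator on $\mathbb R$, and then to invoke the known noncommutative Hardy--Littlewood maximal inequality. First I would reduce to the case of an $\mathcal M_+$-valued $f$: splitting $f$ into the four pieces $(\Re f)_\pm$, $(\Im f)_\pm$ and using that $\|\cdot\|_{L_p(L_\infty(\mathbb R)\overline{\otimes}\mathcal M;\ell_\infty)}$ is a norm, one has $\|{\sup_{\eta>0}}^+ f*k_\eta\|_p\le\sum_{j}\|{\sup_{\eta>0}}^+ f_j*k_\eta\|_p$ with each $f_j\ge0$ and $\|f_j\|_p\le\|f\|_p$; and for $f\ge0$ every $f*k_\eta$ is $\mathcal M_+$-valued since $k_\eta\ge0$. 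Next I would replace $k$ by a nonincreasing nonnegative majorant $K$ on $(0,\infty)$ with $\|K\|_{L_1(\mathbb R)}\le C_t$ and $k\le K$: if $t\ge1$ the kernel $k$ is itself nonincreasing on $(0,\infty)$ with $\|k\|_1=1$, so $K=k$ works; if $0<t<1$ then $k(x)=t^{-1}x^{1/t-1}\chi_{[0,1]}(x)\le t^{-1}\chi_{[0,1]}(x)$, so $K=t^{-1}\chi_{[0,1]}$ works. In either case $0\le f*k_\eta\le f*K_\eta$ for all $\eta>0$.

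The key step is a layer-cake computation for the nonincreasing kernel $K$. Setting $\ell(s):=|\{y>0:K(y)>s\}|$ one has $K=\int_0^\infty\chi_{(0,\ell(s))}\,ds$ and $\int_0^\infty\ell(s)\,ds=\|K\|_1$; substituting this into $f*K_\eta$ and interchanging the positive (Bochner) integrals by Fubini gives, for $\mathcal M_+$-valued $f$,
\[
 (f*K_\eta)(x)=\int_0^\infty\ell(s)\,\Big(\frac{1}{\eta\ell(s)}\int_0^{\eta\ell(s)}f(x-y)\,dy\Big)\,ds .
\]
Since $f\ge0$, the inner one-sided average over $[0,R]$ with $R=\eta\ell(s)$ satisfies $\frac1R\int_0^R f(x-y)\,dy\le\frac1R\int_{-R}^{R}f(x-y)\,dy=2A_Rf(x)$, where $A_Rf(x):=\frac1{2R}\int_{-R}^{R}f(x-y)\,dy$ is the symmetric (ball) average. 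Integrating out $s$ then shows that whenever $b\in L_p(L_\infty(\mathbb R)\overline{\otimes}\mathcal M)_+$ dominates all the averages, $A_Rf\le b$ for all $R>0$, one has $0\le f*k_\eta\le 2\|K\|_1\,b\le 2C_t\,b$ for every $\eta>0$.

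To finish I would invoke the noncommutative Hardy--Littlewood (ball-average) maximal inequality on $\mathbb R$, valid on $L_p(L_\infty(\mathbb R)\overline{\otimes}\mathcal M)$ for all $1<p<\infty$ and arbitrary semifinite $\mathcal M$ (Hong \cite{Ho}; alternatively this follows from \cite{JX07}, realizing $A_R$ as Dunford--Schwartz averages for the translation action of $\mathbb R$, or from \cite{CW}, the dilates of $[-1,1]$ forming a F{\o}lner net for the amenable group $\mathbb R$): it produces $b\in L_p(L_\infty(\mathbb R)\overline{\otimes}\mathcal M)_+$ with $A_Rf\le b$ for all $R>0$ and $\|b\|_p\le C_p\|f\|_p$. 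Combining this with the previous step and the description of $L_p(\mathcal M;\ell_\infty)$ for self-adjoint families recalled in Section \ref{pre}, the positive family $(f*k_\eta)_{\eta>0}$ lies in $L_p(L_\infty(\mathbb R)\overline{\otimes}\mathcal M;\ell_\infty)$ with $\|{\sup_{\eta>0}}^+ f*k_\eta\|_p\le 2C_t\|b\|_p\le C\|f\|_p$, where $C$ depends only on $p$ and $t$.

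I expect no serious obstacle here: the argument is essentially a routine transfer of the scalar comparison with the Hardy--Littlewood maximal function. The only points needing care are (a) the monotonicity of the $\sup^+$-norm under operator domination $0\le f_\lambda\le g_\lambda$, which is immediate from the characterization $\|{\sup_\lambda}^+x_\lambda\|_p=\inf\{\|a\|_p:-a\le x_\lambda\le a\}$ for self-adjoint sequences, and (b) justifying the layer-cake identity and the Fubini interchange at the level of $\mathcal M_+$-valued integrals, which is harmless since every integrand is positive. The single substantial input is the noncommutative Hardy--Littlewood maximal theorem, which is used as a black box; reproving it (e.g. via the noncommutative Dunford--Schwartz maximal ergodic theorem of \cite{JX07}) would be the only nontrivial work if one preferred a self-contained treatment.
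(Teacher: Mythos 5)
Your proof is correct, and it hinges on the same single black box as the paper's (the noncommutative Hardy--Littlewood maximal inequality for ball averages on $\mathbb{R}$, which the paper cites from Mei \cite{Mei09} rather than \cite{Ho}), but the reduction to it is genuinely different. The paper proceeds by a dyadic decomposition of the convolution integral: it splits $[0,\eta]$ into the annuli $[2^{-(k+1)}\eta,2^{-k}\eta]$, freezes $(y/\eta)^{1/t-1}\lesssim (2^{-k})^{1/t-1}$ on each, applies the triangle inequality in $L_p(\mathcal{M};\ell_\infty)$, invokes NC Hardy--Littlewood on each dyadic one-sided average, and sums the geometric series $\sum_k 2^{-k/t}$. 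You instead pass to a nonincreasing nonnegative majorant $K$ of $k$ (case split on $t\ge 1$ vs.\ $t<1$), use the layer-cake identity $K=\int_0^\infty\chi_{(0,\ell(s))}\,ds$ to express $f*K_\eta$ as a continuous superposition of one-sided averages, and directly dominate the whole positive family $(f*k_\eta)_\eta$ in operator order by $2\|K\|_1\,b$ for a single dominating element $b$ produced by NC Hardy--Littlewood. Your route is the clean noncommutative transcription of the classical ``approximate identity with decreasing radial majorant'' argument and avoids an infinite triangle-inequality sum by producing the dominating element all at once; the paper's dyadic route is more elementary (no layer-cake, no case split in $t$) and handles all $t>0$ uniformly. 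Both are valid, with constants depending only on $p$ and $t$.
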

\begin{proof}
  Without loss of generality, we assume that $f\in \mathcal{S}(\mathbb{R})_+\otimes\mathcal{S}({\mathcal{M}})_+$. Then
  \begin{align*}
    (f*k_\eta)(x)&=\frac{1}{\eta}\int_0^\eta t^{-1}f(x-y)(\frac{y}{\eta})^{1/t-1}dy\\
    &=\sum_{k=0}^\infty\frac{1}{\eta}\int_{2^{-(k+1)}\eta}^{2^{-k}\eta}t^{-1}f(x-y)(\frac{y}{\eta})^{1/t-1}dy\\
    %&=\sum_{k=0}^\infty\frac{1}{\eta}\int_{2^{-(k+1)}\eta}^{2^{-k}\eta}t^{-1}f(x-y)(\frac{y}{\eta})^{1/t-1}dy\\
    %&\lesssim\sum_{k=0}^\infty\frac{1}{\eta}\int_{2^{-k}\eta}^{2^{-k}\eta}t^{-1}f(x-y)(2^{-k})^{1/t-1}dy\\
    &\lesssim\sum_{k=0}^\infty\frac{(2^{-k})^{1/t}}{2^{-k}\eta}\int_{2^{-(k+1)}\eta}^{2^{-k}\eta}t^{-1}f(x-y)dy.
  \end{align*}
Hence by the triangle inequality and the noncommutative Hardy-Littlewood maximal inequality (see e.g. \cite[Theorem 3.3]{Mei09}), we obtain
\begin{align*}
  \Big\|{\sup_{\eta>0}}^+f*k_\eta\Big\|_{L_p(L_\infty(\mathbb{R})\overline{\otimes}{\mathcal{M}})}&\lesssim \sum_{k=0}^\infty t^{-1}2^{-k/t}\Big\|{\sup_{\eta>0}}^+\frac{1}{2^{-k}\eta}\int_{2^{-(k+1)}\eta}^{2^{-k}\eta}f(x-y)dy\Big\|_{L_p(L_\infty(\mathbb{R})\overline{\otimes}{\mathcal{M}})}\\
  &\lesssim\sum_{k=0}^\infty t^{-1}2^{-k/t}\|f\|_{L_p(L_\infty(\mathbb{R})\overline{\otimes}{\mathcal{M}})}\lesssim\|f\|_{L_p(L_\infty(\mathbb{R})\overline{\otimes}{\mathcal{M}})}.
\end{align*}
\end{proof}
\begin{lemma}\label{sampling}
Let $\varphi$ be a smooth function supported on $(-\frac{1}{2},\frac{1}{2})$ in $\mathbb{R}$ and let $1\leq q<D$. Then
 \begin{equation}\label{sampes1}
     \Big\|{\sup_{\lambda>0}}^+\sum_{0\leq a<q}\int_{\mathbb{T}} \hat{k}(\lambda\beta)\varphi(D\beta)\hat{f}(a/q+\beta)e^{2\pi im(a/q+\beta)}d\beta\Big\|_{L_p(L_\infty(\mathbb{Z})\overline{\otimes}\mathcal{N})}\leq C\|f\|_{L_p(L_\infty(\mathbb{Z})\overline{\otimes}\mathcal{N})}.
     \end{equation}
\end{lemma}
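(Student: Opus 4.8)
The plan is to reduce estimate \eqref{sampes1} to a continuous maximal inequality on $\mathbb{R}$ and then transfer it to $\mathbb{Z}$ via the modified sampling principle, Lemma \ref{sampling2}. First I would observe that the operator inside the norm is, for each $\lambda$, a Fourier multiplier on $L_p(L_\infty(\mathbb{Z})\overline{\otimes}\mathcal{N})$ whose symbol is the $1$-periodization of $\sum_{0\le a<q}\hat k(\lambda\beta)\varphi(D\beta)$ translated to the points $a/q$; since $\varphi$ is supported in $(-\tfrac12,\tfrac12)$ and $\varphi(D\cdot)$ is supported in $(-\tfrac1{2D},\tfrac1{2D})\subset(-\tfrac1{2q},\tfrac1{2q})$ (using $q<D$), the bumps around distinct $a/q$ are disjoint, so this symbol is exactly of the form $\phi_{\lambda,q}^q{}_{per}$ appearing in \eqref{Tdef}--\eqref{slast3} with $\phi_\lambda(\xi):=\hat k(\lambda q\xi)\varphi(Dq\xi)$ — more precisely one writes the inner multiplier as $\psi_\lambda(\xi):=\hat k(\lambda\xi)\varphi(D\xi)$ supported in $(-\tfrac1{2q},\tfrac1{2q})$, forms $\phi_\lambda(\xi)=\psi_\lambda(\xi/q)$ supported in $(-\tfrac12,\tfrac12)$, and checks that the operator in \eqref{sampes1} coincides with $(T^q_{\phi_{\lambda,q}})_{dis}$ up to the harmless scalar $q$ absorbed into $\check\phi(mq)$. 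This matching of symbols is bookkeeping but must be done carefully.

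Next, Lemma \ref{sampling2} tells us it suffices to prove the \emph{continuous} maximal inequality
\[
  \Big\|{\sup_{\lambda>0}}^+T_{\psi_\lambda}(g)\Big\|_{L_p(L_\infty(\mathbb{R})\overline{\otimes}\mathcal{M})}\lesssim\|g\|_{L_p(L_\infty(\mathbb{R})\overline{\otimes}\mathcal{M})}
\]
for arbitrary semifinite $\mathcal{M}$, where $T_{\psi_\lambda}$ has symbol $\hat k(\lambda\beta)\varphi(D\beta)$. Here the factor $\varphi(D\beta)$ is a single fixed (in $\lambda$) smooth cutoff, so I would peel it off: $T_{\psi_\lambda}=T_{\hat k(\lambda\cdot)}\circ T_{\varphi(D\cdot)}$, and $T_{\varphi(D\cdot)}$ is convolution against $D^{-1}\check\varphi(\cdot/D)$, an $L^1$-normalized bump, hence bounded on every $L_p(L_\infty(\mathbb{R})\overline{\otimes}\mathcal{M})$ with constant independent of $D$ (it is a fixed Fourier multiplier up to dilation; boundedness of such averaging operators on noncommutative $L_p$ is standard, or one bounds it by the noncommutative Hardy--Littlewood maximal operator of \cite{Mei09} as in Lemma \ref{kt}). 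It then remains to control ${\sup_{\lambda>0}}^+T_{\hat k(\lambda\cdot)}(h)$; but $T_{\hat k(\lambda\cdot)}$ is exactly convolution with $k_{\lambda}$ where $k$ is the kernel of Lemma \ref{kt} — note $\widehat{k_\lambda}(\beta)=\hat k(\lambda\beta)$ — so Lemma \ref{kt} gives precisely $\big\|{\sup_{\lambda>0}}^+h*k_\lambda\big\|_p\lesssim\|h\|_p$ for $1<p<\infty$. Composing the two bounds yields the continuous inequality, and Lemma \ref{sampling2} then delivers \eqref{sampes1}.

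I expect the main obstacle to be the \emph{symbol-matching step}: verifying that the operator displayed in \eqref{sampes1}, written in the frequency variable as a sum over $a/q+\beta$, is genuinely the periodized, $q$-dilated multiplier to which Lemma \ref{sampling2} applies — one must check the support condition $\mathrm{supp}(\varphi(D\cdot))\subset(-\tfrac1{2q},\tfrac1{2q})$ (this is where $q<D$ is used), confirm the disjointness of the translates $\{a/q+\mathrm{supp}\}$ inside $\mathbb{T}$, and reconcile the normalization constant $q$ coming from \eqref{slast3} with the $e^{2\pi i m a/q}$ phases in \eqref{sampes1}. A secondary, more routine point is the uniformity of all constants in $q$ and $D$, which is automatic once the reduction is set up correctly because Lemma \ref{sampling2} and Lemma \ref{kt} produce constants depending only on $p$ (and $t$).
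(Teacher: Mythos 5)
Your proposal is correct and follows essentially the same route as the paper: identify the operator in \eqref{sampes1} as $(T^q_{\tilde\varphi_{\lambda,q}})_{dis}$ with $\tilde\varphi_\lambda(\xi)=\hat k(\lambda\xi/q)\varphi(D\xi/q)$, prove the continuous maximal bound for $T_{\tilde\varphi_\lambda}$ by factoring it as convolution with $k_{\text{scale}}$ composed with convolution against an $L^1$-normalized bump $(\check\varphi)_{\text{scale}}$, control the first by Lemma~\ref{kt} and the second by Young's inequality, and finally apply Lemma~\ref{sampling2}. The only (harmless) cosmetic deviation is that you state the continuous bound for the symbol $\psi_\lambda(\beta)=\hat k(\lambda\beta)\varphi(D\beta)$ rather than its $q$-dilate $\tilde\varphi_\lambda=\psi_\lambda(\cdot/q)$, which is the function Lemma~\ref{sampling2} literally asks for; since both bounds follow from the same scale-invariant argument, this does not affect correctness.
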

\begin{proof}
  Let $\tilde{\varphi}_\lambda(\xi):=\hat{k}(\frac{\lambda \xi}{q})\varphi(\frac{D\xi}{q})$, then $supp(\tilde{\varphi}_\lambda)\subset(-\frac{1}{2},\frac{1}{2})$ and we have
  \begin{align}\label{sampes2}
    \sum_{0\leq a<q}\int_{\mathbb{T}} \hat{k}(\lambda(\beta-a/q))\varphi(D(\beta-a/q))\hat{f}(\beta)e^{2\pi im\beta}d\beta=\big((T^q_{\tilde{\varphi}_{\lambda,q}})_{dis}(f)\big)(m),
  \end{align}
  where $(T^q_{\tilde{\varphi}_{\lambda,q}})_{dis}$ is defined as in equality \eqref{Tdef}.
  By applying Lemma \ref{kt} and the Young inequality, for all $g\in {L_p\big(L_\infty(\mathbb{R})\overline{\otimes}\mathcal{M}\big)}$ and arbitrary semifinite von Neumann algebra $\mathcal{M}$, we have
  \begin{align}\label{sampes3}
   \Big\|{\sup_{\lambda>0}}^+T_{\tilde{\varphi}_\lambda}(g)\Big\|_{L_p\big(L_\infty(\mathbb{R})\overline{\otimes}\mathcal{M}\big)}
   &=\Big\|{\sup_{\lambda>0}}^+g*k_{\lambda/q}*(\check{\varphi})_{D/q}\Big\|_{L_p\big(L_\infty(\mathbb{R})\overline{\otimes}\mathcal{M}\big)}\nonumber\\
&\lesssim\|g*(\check{\varphi})_{D/q}\|_{L_p\big(L_\infty(\mathbb{R})\overline{\otimes}\mathcal{M}\big)}\nonumber\\
&\lesssim\|g\|_{L_p\big(L_\infty(\mathbb{R})\overline{\otimes}\mathcal{M}\big)}.
  \end{align}
  Then combining equality \eqref{sampes2} with Lemma \ref{sampling2}, we obtain
  \begin{align*}
  & \Big\|{\sup_{\lambda>0}}^+\sum_{0\leq a<q}\int_{\mathbb{T}} \hat{k}(\lambda\beta)\varphi(D\beta)\hat{f}(a/q+\beta)e^{2\pi im(a/q+\beta)}d\beta\Big\|_{L_p(L_\infty(\mathbb{Z})\overline{\otimes}\mathcal{N})}\\
  &=\Big\|{\sup_{\lambda>0}}^+\sum_{0\leq a<q}\int_{\mathbb{T}} \hat{k}(\lambda(\beta-a/q))\varphi(D(\beta-a/q))\hat{f}(\beta)e^{2\pi im\beta}d\beta\Big\|_{L_p(L_\infty(\mathbb{Z})\overline{\otimes}\mathcal{N})}\\
  &=\Big\|{\sup_{\lambda>0}}^+(T^q_{\tilde{\varphi}_{\lambda,q}})_{dis}(f)\Big\|_{L_p(L_\infty(\mathbb{Z})\overline{\otimes}\mathcal{N})}\lesssim \|f\|_{L_p(L_\infty(\mathbb{Z})\overline{\otimes}\mathcal{N})}.
  \end{align*}
\end{proof}
\begin{lemma}\label{key1}
  For $p\in (\frac{3\rho+2}{2\rho+1},2]$, we have the following estimate,
  \begin{equation*}
    \Big(\sum_{k=0}^\infty\|f*(K_{2^k}-L_{2^k})\|_{L_p(L_\infty(\mathbb{Z})\overline{\otimes}\mathcal{N})}^p\Big)^{1/p}\leq C\|f\|_{L_p(L_\infty(\mathbb{Z})\overline{\otimes}\mathcal{N})}.
  \end{equation*}
 \end{lemma}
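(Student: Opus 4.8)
The plan is to reduce the assertion, by interpolation, to two per-level bounds for the convolution operator $f\mapsto f*(K_{2^k}-L_{2^k})$ on $L_\infty(\mathbb{Z})\overline{\otimes}\mathcal{N}$: an $L_2$ bound decaying in $k$, and an $L_1$ bound growing only polynomially in $k$; interpolating these on the noncommutative $L_p$ scale and summing over $k$ then gives the claim, the admissible range of $p$ being exactly where the resulting exponent of $k$ is less than $-1$. For the $L_2$ endpoint: since $L_2(L_\infty(\mathbb{Z})\overline{\otimes}\mathcal{N})$ is a Hilbert space on which convolution by a scalar kernel $g$ is the Fourier multiplier with symbol $\hat g$, one has $\|f*g\|_{L_2(L_\infty(\mathbb{Z})\overline{\otimes}\mathcal{N})}\leq\|\hat{g}\|_{L_\infty(\mathbb{T})}\|f\|_{L_2(L_\infty(\mathbb{Z})\overline{\otimes}\mathcal{N})}$; taking $g=K_{2^k}-L_{2^k}$, fixing some $0<\rho'<\rho$, and invoking Lemma \ref{akey3} together with $\log 2^k\gtrsim k$ yields, for $k\geq1$,
\[
\|f*(K_{2^k}-L_{2^k})\|_{L_2(L_\infty(\mathbb{Z})\overline{\otimes}\mathcal{N})}\ \lesssim\ k^{-(1+\rho')/2}\,\|f\|_{L_2(L_\infty(\mathbb{Z})\overline{\otimes}\mathcal{N})}.
\]

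For the $L_1$ endpoint: $L_1(L_\infty(\mathbb{Z})\overline{\otimes}\mathcal{N})$ identifies isometrically with $\ell_1(\mathbb{Z};L_1(\mathcal{N}))$, so convolution by a scalar kernel $g$ obeys Young's inequality $\|f*g\|_{L_1}\leq\|g\|_{\ell_1(\mathbb{Z})}\|f\|_{L_1}$, and it is enough to bound $\|K_{2^k}-L_{2^k}\|_{\ell_1(\mathbb{Z})}$. Now $\|K_{2^k}\|_{\ell_1(\mathbb{Z})}=1$, while $L_{2^k}$, by \eqref{akeq1}, is assembled from the $O(\log k)$ scales $s$ that are active at frequency length $2^k$ — those with $D_s\lesssim 2^{kt}$, for which $2^s\lesssim k$ since $\log D_s\sim s^2 2^s$. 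Its leading term $(\hat{k}(2^{kt}\,\cdot)\phi)^{\vee}$ has $\ell_1(\mathbb{Z})$-norm $O(1)$, as one checks by writing the $m$-th coefficient as $\int_0^1 t^{-1}x^{1/t-1}(\phi)^{\vee}(x\,2^{kt}-m)\,dx$ and using $\int_0^1 t^{-1}x^{1/t-1}\,dx=1$ with $\sup_{u}\sum_{m\in\mathbb{Z}}|(\phi)^{\vee}(u-m)|<\infty$; and, by the second and third estimates of Lemma \ref{akey2}, the scale-$s$ piece (supported on $\mathfrak{R}_s\setminus\mathfrak{R}_{s-1}$ with bump $\phi(D_s\,\cdot)$) contributes $O(2^{\rho s})$ to the $\ell_1(\mathbb{Z})$-norm. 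Summing the geometric series over the active scales gives $\|L_{2^k}\|_{\ell_1(\mathbb{Z})}\lesssim 2^{\rho s_k}\lesssim k^\rho$, hence $\|f*(K_{2^k}-L_{2^k})\|_{L_1}\lesssim k^\rho\|f\|_{L_1}$.

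With both endpoints in hand, I would interpolate: for each fixed $k$ the map $f\mapsto f*(K_{2^k}-L_{2^k})$ is a single bounded linear operator, so by complex interpolation of the noncommutative $L_p$ spaces (see \cite{PX03}) — with $\frac{1}{p}=(1-\theta)+\frac{\theta}{2}$, i.e.\ $1-\theta=\frac{2-p}{p}$ and $\theta=\frac{2(p-1)}{p}$ — the two bounds combine to
\[
\|f*(K_{2^k}-L_{2^k})\|_{L_p(L_\infty(\mathbb{Z})\overline{\otimes}\mathcal{N})}\ \lesssim\ k^{\frac{1}{p}(\rho(2-p)-(1+\rho')(p-1))}\,\|f\|_{L_p(L_\infty(\mathbb{Z})\overline{\otimes}\mathcal{N})}.
\]
Raising to the $p$-th power and summing over $k\geq1$ (the single term $k=0$ being controlled directly by Young's inequality), the claim reduces to convergence of $\sum_{k\geq1}k^{\rho(2-p)-(1+\rho')(p-1)}$, i.e.\ to $\rho(2-p)-(1+\rho')(p-1)<-1$. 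At $\rho'=\rho$ this rearranges to $3\rho+2<p(2\rho+1)$, that is $p>\frac{3\rho+2}{2\rho+1}$; since the left-hand side depends continuously on $\rho'$, for every $p>\frac{3\rho+2}{2\rho+1}$ one may fix $\rho'<\rho$ close enough to $\rho$ that the inequality still holds, which completes the proof.

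The interpolation and the geometric summation are routine, and the $L_2$ endpoint is immediate from Lemma \ref{akey3}; the real work is the $L_1$ endpoint, and within it the bound $\|L_{2^k}\|_{\ell_1(\mathbb{Z})}\lesssim k^\rho$ — one must isolate the $s_k\sim\log k$ active scales and sum the geometric $O(2^{\rho s})$ bounds of Lemma \ref{akey2}. It is precisely this polynomial rate $k^\rho$ that pins the threshold at $\frac{3\rho+2}{2\rho+1}$, which is why one wants $\rho$ as large as the setup of Section \ref{pre} permits: as $\rho\uparrow2$ this threshold decreases to $\frac{8}{5}<\frac{\sqrt{5}+1}{2}$, and it is this that ultimately yields Theorem \ref{thm1} in the full stated range $p>\frac{\sqrt{5}+1}{2}$.
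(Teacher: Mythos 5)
Your $L_2$ endpoint and the final interpolation/summation are sound, and the threshold you derive, $p>\tfrac{3\rho+2}{2\rho+1}$, matches the paper's. The problem is your $L_1$ endpoint, specifically the claim $\|L_{2^k}\|_{\ell_1(\mathbb{Z})}\lesssim k^\rho$. In \eqref{akeq1} the kernel $\hat L_N$ is defined by a sum over \emph{all} $s\geq 1$, with no cutoff, and Lemma \ref{akey2} (which is uniform in $N$) gives only $\|(\text{scale-}s\text{ piece of }L_N)\|_{\ell_1}\lesssim 2^{\rho s}$. You propose to sum only over the ``active'' scales with $D_s\lesssim 2^{kt}$, i.e. $2^s\lesssim k$, and discard the rest — but you give no reason why the scales with $D_s\gg 2^{kt}$ contribute negligibly. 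They do not: on the support of $\phi(D_s\beta)$ one has $|N^t\beta|\lesssim N^t/D_s\ll 1$, so $\hat k(N^t\beta)\approx\hat k(0)=1$ and the factor $\hat k(N^t\cdot)$ provides no suppression at all; the per-scale $\ell_1$ cost remains $O(2^{\rho s})$ and diverges when summed over $s$. So the $\ell_1$ bound you want on $L_{2^k}$ is not a consequence of the lemmas in Section \ref{pre}, and the reasoning you offer for it does not hold up.

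This is exactly the point where the paper's argument diverges from yours. The paper never attempts an $L_1$ bound on $L_N$. Instead it first proves, for each fixed $s$, an $L_1$ bound of order $2^{\rho s}$ (from Lemma \ref{akey2}) and an $L_2$ bound of order $2^{-(1+\rho)s/2}$ (from Lemma \ref{akeq4} and Parseval); interpolating \emph{within each scale} gives an $L_r$ bound of order $2^{-s(2\rho+1-(3\rho+1)/r)}$, which is summable over $s$ precisely when $r>\tfrac{3\rho+1}{2\rho+1}$. This yields $\|f*(K_N-L_N)\|_{L_r}\lesssim\|f\|_{L_r}$ with constant uniform in $N$, which then plays the role your (unjustified) $L_1$ bound was meant to play. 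Only after that does the paper interpolate with the $(\log N)^{-(1+\rho')/2}$ decay at $L_2$ from Lemma \ref{akey3}. In other words, the paper uses two layers of interpolation — one across scales $s$, one across levels $N$ — whereas you try to collapse the first layer into a single $\ell_1$ estimate; that collapse is what fails. To repair your argument you should replace the $L_1$ endpoint by the paper's $L_r$ endpoint (for $r$ slightly above $\tfrac{3\rho+1}{2\rho+1}$), obtained by the per-scale interpolation, and then your final interpolation with the $L_2$ decay recovers the stated range.
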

 \begin{proof}
   It suffices to show for any $N>0$, there exists $c(p)>\frac{1}{p}$, such that
   \begin{equation}\label{decayes1}
    \Big\|f*(K_{N}-L_{N})\Big\|_{L_p(L_\infty(\mathbb{Z})\overline{\otimes}\mathcal{N})}\leq C(\log(N))^{-c(p)}\|f\|_{L_p(L_\infty(\mathbb{Z})\overline{\otimes}\mathcal{N})}.
   \end{equation}
   On one hand, by applying Lemma \ref{akey2} and the Young inequality, we have
   \begin{align}\label{decayes2}
     &\Big\|\sum_{\xi\in \mathfrak{R}_s\backslash\mathfrak{R}_{s-1}}\int_{\mathbb{T}} S(\xi)\hat{k}(N^t(\theta-\xi))\phi(D_s(\theta-\xi))\hat{f}(\theta)e^{2\pi i\theta m}d\theta\Big\|_{L_1(L_\infty(\mathbb{Z})\overline{\otimes}\mathcal{N})}\nonumber\\
      &\leq\Big\|\sum_{\xi\in \mathfrak{R}_s}\int_{\mathbb{T}} S(\xi)\hat{k}(N^t(\theta-\xi))\phi(D_s(\theta-\xi))\hat{f}(\theta)e^{2\pi i\theta m}d\theta\Big\|_{L_1(L_\infty(\mathbb{Z})\overline{\otimes}\mathcal{N})}\nonumber\\
      &+\Big\|\sum_{\xi\in \mathfrak{R}_{s-1}}\int_{\mathbb{T}} S(\xi)\hat{k}(N^t(\theta-\xi))\phi(D_s(\theta-\xi))\hat{f}(\theta)e^{2\pi i\theta m}d\theta\Big\|_{L_1(L_\infty(\mathbb{Z})\overline{\otimes}\mathcal{N})}\nonumber\\
      &\leq C2^{s\rho}\|f\|_{L_1(L_\infty(\mathbb{Z})\overline{\otimes}\mathcal{N})}.
   \end{align}
   On the other hand,we observe that for any distinct $\xi$ and  $\eta$ in $\mathfrak{R}_s\setminus\mathfrak{R}_{s-1}$, we have $D_s|\xi-\eta|>2$. This implies that for a fixed $\theta\in \mathbb{T}$,  there can be at most one $\xi\in \mathfrak{R}_s\setminus\mathfrak{R}_{s-1}$ such that $\phi(D_s(\theta-\xi))\neq0$. Utilizing Lemma \ref{akeq4}, one has
    \begin{equation}\label{eqsuu}
      \Big|\sum_{\xi\in \mathfrak{R}_s\backslash\mathfrak{R}_{s-1}} S(\xi)\hat{k}(N^t(\theta-\xi))\phi(D_s(\theta-\xi))\Big|\leq C2^{-\frac{(1+\rho)s}{2}}.
    \end{equation}
    Then by combining the Parseval identity with inequality \eqref{eqsuu}, we obtain
   \begin{align}\label{decayes3}
     &\Big\|\sum_{\xi\in \mathfrak{R}_s\backslash\mathfrak{R}_{s-1}}\int_{\mathbb{T}} S(\xi)\hat{k}(N^t(\theta-\xi))\phi(D_s(\theta-\xi))\hat{f}(\theta)e^{2\pi i\theta m}d\theta\Big\|_{L_2(L_\infty(\mathbb{Z})\overline{\otimes}\mathcal{N})}\nonumber\\
     &=\Big\{\tau\Big(\int_{\mathbb{T}}\Big|\sum_{\xi\in \mathfrak{R}_s\backslash\mathfrak{R}_{s-1}} S(\xi)\hat{k}(N^t(\theta-\xi))\phi(D_s(\theta-\xi))\Big|^2|\hat{f}(\theta)|^2d\theta\Big)\Big\}^{1/2}\nonumber\\
     &\leq C2^{-\frac{(1+\rho)s}{2}} \Big\{\tau\Big(\int_{\mathbb{T}}|\hat{f}(\theta)|^2d\theta\Big)\Big\}^{1/2}= C2^{-\frac{(1+\rho)s}{2}}\|f\|_{L_2(L_\infty(\mathbb{Z})\overline{\otimes}\mathcal{N})}.
   \end{align}
   By interpolation between \eqref{decayes2} and \eqref{decayes3}, for $r\in(1,2)$, we have
     \begin{align}\label{decayes4}
    &\Big\|\sum_{\xi\in \mathfrak{R}_s\backslash\mathfrak{R}_{s-1}}\int_{\mathbb{T}} S(\xi)\hat{k}(N^t(\theta-\xi))\phi(D_s(\theta-\xi))\hat{f}(\theta)e^{2\pi i\theta m}d\theta\Big\|_{L_r(L_\infty(\mathbb{Z})\overline{\otimes}\mathcal{N})}\nonumber\\&\leq C2^{-s(2\rho+1-\frac{3\rho+1}{r})}\|f\|_{L_r(L_\infty(\mathbb{Z})\overline{\otimes}\mathcal{N})}.
     \end{align}
     Hence if $r>\frac{3\rho+1}{2\rho+1}$, by combining the Young inequality with \eqref{decayes4}, we conclude that
 %\begin{equation}\label{decayes5}
%     \Big\|\sum_{s=1}^\infty\sum_{\xi\in \mathfrak{R}_s\backslash\mathfrak{R}_{s-1}}\int_{\mathbb{T}} S(\xi)\hat{k}(N^t(\theta-\xi))\phi(D_s(\theta-\xi))\hat{f}(m)e^{2\pi i\theta m}d\theta\Big\|_{L_r(L_\infty(\mathbb{Z})\overline{\otimes}\mathcal{N})}\leq C\|f\|_{L_r(L_\infty(\mathbb{Z})\overline{\otimes}\mathcal{N})}.
%     \end{equation}
 \begin{align}\label{decayes6}
   \Big\|f*L_{N}\Big\|_{L_r(L_\infty(\mathbb{Z})\overline{\otimes}\mathcal{N})}&\leq \Big\|\int_{\mathbb{T}} \hat{k}(N^t(\theta))\phi(\theta)\hat{f}(\theta)e^{2\pi i\theta m}d\theta\Big\|_{L_r(L_\infty(\mathbb{Z})\overline{\otimes}\mathcal{N})}\nonumber\\&+ \sum_{s=1}^\infty\Big\|\sum_{\xi\in \mathfrak{R}_s\backslash\mathfrak{R}_{s-1}}\int_{\mathbb{T}} S(\xi)\hat{k}(N^t(\theta-\xi))\phi(D_s(\theta-\xi))\hat{f}(\theta)e^{2\pi i\theta m}d\theta\Big\|_{L_r(L_\infty(\mathbb{Z})\overline{\otimes}\mathcal{N})}\nonumber\\&\lesssim \|f\|_{L_r(L_\infty(\mathbb{Z})\overline{\otimes}\mathcal{N})}+\sum_{s=1}^\infty2^{-s(2\rho+1-\frac{3\rho+1}{r})}\|f\|_{L_r(L_\infty(\mathbb{Z})\overline{\otimes}\mathcal{N})}
   \lesssim \|f\|_{L_r(L_\infty(\mathbb{Z})\overline{\otimes}\mathcal{N})}.
   \end{align}
Since it is straightforward to show that  $\Big\|f*K_{N}\Big\|_{L_r(L_\infty(\mathbb{Z})\overline{\otimes}\mathcal{N})}\leq C\|f\|_{L_r(L_\infty(\mathbb{Z})\overline{\otimes}\mathcal{N})}$, we can derive the following inequality,
 \begin{equation}\label{decayes7}
   \Big\|f*(K_N-L_{N})\Big\|_{L_r(L_\infty(\mathbb{Z})\overline{\otimes}\mathcal{N})}\leq C\|f\|_{L_r(L_\infty(\mathbb{Z})\overline{\otimes}\mathcal{N})}.
 \end{equation}
 Applying Lemma \ref{akey3} and the Parseval identity, for $0<\rho^\prime<\rho$, we have
 \begin{equation}\label{decayes8}
   \Big\|f*(K_N-L_{N})\Big\|_{L_2(L_\infty(\mathbb{Z})\overline{\otimes}\mathcal{N})}\leq C(\log(N))^{-\frac{1+\rho^\prime}{2}}\|f\|_{L_2(L_\infty(\mathbb{Z})\overline{\otimes}\mathcal{N})}.
 \end{equation}
 By interpolation again,  fix $p>\frac{3\rho+2}{2\rho+1}$, we arrive at
  \begin{equation}\label{decayes8}
   \Big\|f*(K_N-L_{N})\Big\|_{L_p(L_\infty(\mathbb{Z})\overline{\otimes}\mathcal{N})}\leq C(\log(N))^{-(1+\rho^\prime)\frac{\frac{1}{r}-\frac{1}{p}}{\frac{2}{r}-1}}\|f\|_{L_p(L_\infty(\mathbb{Z})\overline{\otimes}\mathcal{N})}.
 \end{equation}
Then we can choose $\rho^\prime<\rho, r>\frac{3\rho+1}{2\rho+1}$ such that $(1+\rho^\prime)\frac{\frac{1}{r}-\frac{1}{p}}{\frac{2}{r}-1}>\frac{1}{p}$,  thereby completing the proof of this lemma.
 \end{proof}

 \begin{lemma}\label{key2}
  For $p\in(\frac{2\rho+1}{\rho+1},2]$, we have the following estimate,
  \begin{equation*}
   \Big\|{\sup_{N}}^+f*L_{N}\Big\|_{L_p(L_\infty(\mathbb{Z})\overline{\otimes}\mathcal{N})}\leq C\|f\|_{L_p(L_\infty(\mathbb{Z})\overline{\otimes}\mathcal{N})}.
  \end{equation*}
 \end{lemma}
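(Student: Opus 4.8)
The plan is to split the approximate kernel $L_N$ into its arithmetic levels and treat each level separately, interpolating between an $L_2$-estimate with geometric decay in the level and an $L_{p_0}$-estimate, with $p_0$ close to $1$, that grows at most geometrically. Following \eqref{akeq1}, write $\widehat{L}_N=\widehat{L}_N^{(0)}+\sum_{s\ge 1}\widehat{L}_N^{(s)}$, where $\widehat{L}_N^{(0)}(\theta)=\widehat{k}(N^t\theta)\phi(\theta)$ and $\widehat{L}_N^{(s)}(\theta)=\sum_{\xi\in\mathfrak{R}_s\setminus\mathfrak{R}_{s-1}}S(\xi)\widehat{k}(N^t(\theta-\xi))\phi(D_s(\theta-\xi))$; by the triangle inequality for the maximal norm it suffices to bound $\big\|{\sup_N}^+ f*L_N^{(0)}\big\|_{L_p}$ and $\sum_{s\ge 1}\big\|{\sup_N}^+ f*L_N^{(s)}\big\|_{L_p}$. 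The term $s=0$ is the simplest: with $\phi_\lambda(\xi):=\widehat{k}(\lambda\xi)\phi(\xi)$, which is supported in $(-\tfrac12,\tfrac12)$, we have $T_{\phi_\lambda}g=k_\lambda*(\check\phi*g)$, so Lemma \ref{kt} together with Young's inequality gives $\big\|{\sup_{\lambda>0}}^+ T_{\phi_\lambda}g\big\|_{L_p(L_\infty(\mathbb{R})\overline{\otimes}\mathcal{M})}\lesssim\|g\|_{L_p(L_\infty(\mathbb{R})\overline{\otimes}\mathcal{M})}$ for all $1<p<\infty$ and arbitrary $\mathcal{M}$; Lemma \ref{sampling2} with $q=1$ then transfers this to $\mathbb{Z}$ and yields $\big\|{\sup_N}^+ f*L_N^{(0)}\big\|_{L_p}\lesssim\|f\|_{L_p}$.

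For $s\ge 1$ the essential building block is a maximal inequality, for $q<D_s$, for the weighted sum over the cyclic subgroup $\mathbb{Z}_q$: if $V_q^N$ denotes the Fourier multiplier operator on $\mathbb{Z}$ with symbol $\sum_{0\le a<q}S(a/q)\widehat{k}(N^t(\theta-a/q))\phi(D_s(\theta-a/q))$, then $\big\|{\sup_N}^+ V_q^N f\big\|_{L_p}\lesssim\|f\|_{L_p}$ for all $1<p<\infty$. This follows from Lemma \ref{sampling2} and Lemma \ref{kt}: by the Gauss-sum identity $\sum_{0\le a<q}S(a/q)e^{2\pi i m a/q}=\nu_q(m)$, where $\nu_q(m)=\#\{0\le r<q:\ r^t\equiv m\ (\mathrm{mod}\ q)\}$, the kernel of $V_q^N$ equals $\nu_q(m)\psi^{(s)}_{N^t}(m)$, with $\psi^{(s)}_\lambda$ the kernel with symbol $\widehat{k}(\lambda\beta)\phi(D_s\beta)$; splitting $\nu_q$ into residue classes expresses $V_q^N$ as a convex combination — with weights $q^{-1}\nu_q(c)\ge 0$ summing to $1$ — of the operators $f\mapsto q\sum_{l\in\mathbb{Z}}f(\cdot-c-lq)\psi^{(s)}_{N^t}(lq+c)$, $0\le c<q$, and each of these, after translation by $c$, is of the form $(T^q_{\phi_{\lambda,q}})^q_{dis}$ of \eqref{slast3} for a function $\phi_\lambda$ supported in $(-\tfrac12,\tfrac12)$ whose continuous maximal operator is controlled by Lemma \ref{kt} (the factor $\phi(D_s\cdot)$ and the translation being harmless convolution with a fixed $L_1$ kernel and a shift). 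Lemma \ref{akey2} is the $\ell_1$-kernel incarnation of the same computation. Now, by \eqref{preq1}, $\mathfrak{R}_s\setminus\mathfrak{R}_{s-1}$ is a disjoint union of at most $C2^{\rho s}$ pieces, each contained in a single $\mathbb{Z}_q$ with $q<D_s$ and each obtained by inclusion–exclusion from $O(1)$ of the operators $V_q^N$; moreover, since $D_s|\xi-\xi'|>2$ for distinct $\xi,\xi'\in\mathfrak{R}_s\setminus\mathfrak{R}_{s-1}$, the frequency supports $\{|\theta-\xi|<1/(2D_s)\}$ are pairwise disjoint. Summing the $V_q^N$-bound over these pieces gives the growth estimate $\big\|{\sup_N}^+ f*L_N^{(s)}\big\|_{L_{p_0}}\lesssim 2^{\rho s}\|f\|_{L_{p_0}}$, valid for every $1<p_0<\infty$.

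For the decay estimate at $p=2$ we use that $|S(\xi)|\le C2^{-(1+\rho)s/2}$ for $\xi\in\mathfrak{R}_s\setminus\mathfrak{R}_{s-1}$ (Lemma \ref{akeq4}, applied at level $s-1$, since such $\xi\notin\mathfrak{R}_{s-1}$). Writing $f*L_N^{(s)}=\sum_{j\le C2^{\rho s}}U_j^N P_jf$, where $P_j$ is the frequency projection onto the (pairwise disjoint) union of the bumps of the $j$-th piece and $U_j^N$ is the corresponding weighted maximal operator whose coefficients have size $\le C2^{-(1+\rho)s/2}$, the analysis of the previous paragraph — now keeping track of the size of the weights — gives $\big\|{\sup_N}^+ U_j^N P_jf\big\|_{L_2}\lesssim 2^{-(1+\rho)s/2}\|P_jf\|_{L_2}$. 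Hence, by the triangle inequality followed by the Cauchy–Schwarz inequality over the $\le C2^{\rho s}$ pieces together with the orthogonality relation $\sum_j\|P_jf\|_{L_2}^2\le\|f\|_{L_2}^2$,
\begin{equation*}
  \big\|{\sup_N}^+ f*L_N^{(s)}\big\|_{L_2}\lesssim 2^{\rho s/2}\cdot 2^{-(1+\rho)s/2}\,\|f\|_{L_2}=2^{-s/2}\|f\|_{L_2}.
\end{equation*}
Interpolating (Lemma \ref{max interpolation}) between this $L_2$-estimate and the $L_{p_0}$-estimate as $p_0\downarrow 1$ gives $\big\|{\sup_N}^+ f*L_N^{(s)}\big\|_{L_p}\lesssim 2^{s\sigma(p)}\|f\|_{L_p}$ with $\sigma(p)<0$ exactly when $p>\frac{2\rho+1}{\rho+1}$; summing the geometric series over $s\ge1$ and adding the $s=0$ term completes the proof.

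The main difficulty is the decay estimate at $p=2$. The geometric gain $2^{-(1+\rho)s/2}$ is encoded in the sizes of the Gauss sums and is a cancellation phenomenon among the $O(2^{\rho s})$ frequency-separated pieces; it therefore cannot survive a triangle inequality over those pieces and must be retained inside each piece, via the weighted-cyclic-subgroup maximal bound built on Lemma \ref{sampling2} and Lemma \ref{kt}, with the pieces recombined only afterwards by Cauchy–Schwarz, where the frequency disjointness restores the full $L_2$-norm of $f$. Arranging the decomposition \eqref{preq1} relative to $\mathfrak{R}_{s-1}$ so that each piece is, up to $O(1)$ cyclic-subgroup operators, one on which all Gauss-sum weights have size $\le C2^{-(1+\rho)s/2}$ — so that the per-piece estimate genuinely carries this decay — is the delicate point, and it is also the place where the range $0<\rho<2$ (rather than $\rho<1$) is exploited.
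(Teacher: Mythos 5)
Your overall plan matches the paper's exactly: split $L_N$ into arithmetic levels, prove a growth estimate $\Lambda_{p_0,s}\lesssim 2^{\rho s}$ for some $p_0$ near $1$, prove a decay estimate $\Lambda_{2,s}\lesssim 2^{-s/2}$ at $p=2$, interpolate via Lemma~\ref{max interpolation}, and sum the geometric series; the $s=0$ term is handled identically. Your convex-combination argument for the weighted cyclic operator $V_q^N$ — using the number-theoretic identity $\sum_{0\le a<q}S(a/q)e^{2\pi i ma/q}=\nu_q(m)$ with $\nu_q\ge 0$, $\sum_c\nu_q(c)=q$, to reduce to the sampled operators $(T^q_{\phi_{\lambda,q}})_{dis}$ of Lemma~\ref{sampling2} — is a valid and somewhat more self-contained alternative to the paper's route, which instead inserts an intermediary function $F_i$ whose symbol $\sum_{\xi\in\Gamma_i}S(\xi)\psi(D_s(\cdot-\xi))$ absorbs the Gauss sums, applies the \emph{unweighted} Lemma~\ref{sampling} to $F_i$, and controls $\|F_i\|_{L_r}$ via an $\ell_1$-kernel estimate quoted from Bourgain.

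There is, however, a genuine gap in your $L_2$-decay step. You claim that the per-piece bound $\|{\sup_N}^+U_j^N P_jf\|_{L_2}\lesssim 2^{-(1+\rho)s/2}\|P_jf\|_{L_2}$ follows from ``the analysis of the previous paragraph — now keeping track of the size of the weights.'' But the weights in your convex combination are $\nu_q(c)/q\ge 0$ summing to $1$; the Gauss sums $S(a/q)$ have already been consumed by the identity $\sum_aS(a/q)e^{2\pi i ma/q}=\nu_q(m)$ and are no longer visible in that decomposition, so there is nothing small there to track, and that route can only deliver the bound $\lesssim\|P_jf\|_{L_2}$ with no gain. The decay has to be extracted by a different mechanism, which is exactly the paper's $F_i$-trick in disguise: introduce the auxiliary cut-off $\psi$ (equal to $1$ on $[-\tfrac12,\tfrac12]$, supported in $[-1,1]$) so that $\phi\psi=\phi$; then, by the separation $D_s|\xi-\eta|>2$, one has the exact factorization $U_j^N=\widetilde U_j^N\circ T_j$ where $T_j$ is the multiplier with symbol $\sum_{\xi\in\Gamma_j}S(\xi)\psi(D_s(\theta-\xi))$ and $\widetilde U_j^N$ is the \emph{unweighted} per-piece maximal operator. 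Now $\|{\sup_N}^+\widetilde U_j^N g\|_{L_2}\lesssim\|g\|_{L_2}$ by Lemma~\ref{sampling}, and $\|T_j\|_{L_2\to L_2}\le\sup_{\xi\in\Gamma_j}|S(\xi)|\le C2^{-(1+\rho)s/2}$ by Parseval together with the disjointness of the supports of $\psi(D_s(\cdot-\xi))$ and Lemma~\ref{akeq4}; that is where the decay enters. Finally, you misattribute the delicacy to the range $0<\rho<2$: that enlarged range plays no role in establishing the per-piece decay, and is only used at the very end of the proof of Theorem~\ref{thm1} (equality \eqref{finaleq}) to make $p>\max\{\tfrac{2\rho+1}{\rho+1},\tfrac{3\rho+2}{2\rho+1}\}$ achievable for all $p>\tfrac{1+\sqrt5}{2}$.
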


 \begin{proof}
    For a fixed value of $s$, $\Lambda_{p,s}$ denotes the best constant that satisfies the following inequality,
  \begin{equation}\label{last1}
   \Big\|{\sup_{\lambda>0}}^+\sum_{\xi\in\mathfrak{R}_s\setminus\mathfrak{R}_{s-1}}\int_{\mathbb{T}} S(\xi) \hat{f}(\xi+\beta) \hat{k}(\lambda\beta)\phi(D_s\beta)e^{2\pi im(\xi+\beta)}d\beta\Big\|_{L_p(L_\infty(\mathbb{Z})\overline{\otimes}\mathcal{N})}\leq \Lambda_{p,s}\|f\|_{L_p(L_\infty(\mathbb{Z})\overline{\otimes}\mathcal{N})}.
    \end{equation}
  Utilizing this inequality in conjunction with Lemma \ref{kt}, we can draw parallels to the proof of inequality \eqref{decayes6} and arrive at the following deduction,
    \begin{align}\label{last2}
       \Big\|{\sup_{N}}^+f*L_{N}\Big\|_{L_p(L_\infty(\mathbb{Z})\overline{\otimes}\mathcal{N})}
       \lesssim\Big(\sum_{s=1}^\infty\Lambda_{p,s}\Big)\|f\|_{L_p(L_\infty(\mathbb{Z})\overline{\otimes}\mathcal{N})}.
    \end{align}
    Hence it suffices to show that the series $\sum_{s=1}^\infty\Lambda_{p,s}<\infty$. Observe that the set $\mathfrak{R}_s\setminus\mathfrak{R}_{s-1}$ can be expressed as the disjoint union of at most  $2^{s\rho}$ subset $\Gamma_i$ of $\mathbb{T}$, where each $\Gamma_i$ is a difference of cyclic subgroups. Let $\psi$ be a smooth function on $\mathbb{R}$ that equals $1$ on $[-\frac{1}{2},\frac{1}{2}]$ and vanishes outside $[-1,1]$.
    One can define the function
    \begin{equation*}
      F_i(m):=\sum_{\xi\in\Gamma_i}\int_{\mathbb{T}} S(\xi)\psi(D_s(\alpha-\xi))\hat{f}(\alpha)e^{2\pi im\alpha}d\alpha.
    \end{equation*}
   Observing that for distinct $\xi$ and  $\eta$ in $\mathfrak{R}_s\setminus\mathfrak{R}_{s-1}$, we have $D_s|\xi-\eta|>2$. As a result, the support properties of $\phi$ and $\psi$ ensure that $\psi(D_s(\xi+\beta-\eta))\phi(D_s\beta)$ vanishes for all $\beta\in\mathbb{T}$ when $\xi\neq\eta$. Therefore, we can rewrite the expression as follows,
    \begin{align}\label{last3}
  &\sum_{\xi\in\Gamma_i}\int_{\mathbb{T}} S(\xi)\hat{f}(\xi+\beta)\hat{k}(\lambda \beta)\phi(D_s\beta)e^{2\pi im(\xi+\beta)}d\beta\nonumber\\
   &=\sum_{\xi\in\Gamma_i}\int_{\mathbb{T}} \sum_{\eta\in\Gamma_i}\hat{f}(\xi+\beta)S(\eta)\hat{k}(\lambda \beta)\psi(D_s(\xi+\beta-\eta))\phi(D_s\beta)e^{2\pi im(\xi+\beta)}d\beta\nonumber\\
   &=\sum_{\xi\in\Gamma_i}\int_{\mathbb{T}} \hat{F_i}(\xi+\beta)\hat{k}(\lambda \beta)\phi(D_s\beta)e^{2\pi im(\xi+\beta)}d\beta.
    \end{align}
For fixed $\Gamma_i$, we may assume that $\Gamma_i=\mathbb{Z}_{q_1}\backslash\mathbb{Z}_{q_2}$ for some $q_2<q_1<D_s$. Utilizing Lemma \ref{sampling} for $1<r\leq2$ and the triangle inequality, we can derive the following inequality,
     \begin{align}\label{last4}
 &\Big\|{\sup_{\lambda>0}}^+\sum_{\xi\in\Gamma_i}\int_{\mathbb{T}} \hat{F_i}(\xi+\beta)\hat{k}(\lambda \beta)\phi(D_s\beta)e^{2\pi im(\xi+\beta)}d\beta\Big\|_{L_r(L_\infty(\mathbb{Z})\overline{\otimes}\mathcal{N})}\nonumber\\
 &\leq\sum_{\ell=1}^2\Big\|{\sup_{\lambda>0}}^+\sum_{\xi\in\mathbb{Z}_{q_\ell}}\int_{\mathbb{T}} \hat{F_i}(\xi+\beta)\hat{k}(\lambda \beta)\phi(D_s\beta)e^{2\pi im(\xi+\beta)}d\beta\Big\|_{L_r(L_\infty(\mathbb{Z})\overline{\otimes}\mathcal{N})}\nonumber\\
 &\leq C_r\|F_i\|_{L_r(L_\infty(\mathbb{Z})\overline{\otimes}\mathcal{N})}= C_r\Big\|\sum_{\xi\in\Gamma_i}\int_{\mathbb{T}} S(\xi)\psi(D_s(\alpha-\xi))\hat{f}(\alpha)e^{2\pi im\alpha}d\alpha\Big\|_{L_r(L_\infty(\mathbb{Z})\overline{\otimes}\mathcal{N})}.
    \end{align}
  Furthermore, as demonstrated in \cite[page 84]{Bo882}, the inequality
\begin{equation*}
\Big\|\Big(\sum_{\xi\in\Gamma_i}S(\xi)\psi(D_s(\alpha-\xi))\Big)^{\vee}\Big\|_{\ell_1(\mathbb{Z})} < C
\end{equation*}
holds true for a constant C that is independent of $i$. By the equality \eqref{last3}, the triangle inequality, inequality \eqref{last4} and the Young inequality, we get
  \begin{align}\label{last5}
  &\Big\|{\sup_{\lambda>0}}^+\sum_{\xi\in\mathfrak{R}_s\setminus\mathfrak{R}_{s-1}}\int_{\mathbb{T}} S(\xi) \hat{f}(\xi+\beta) \hat{k}(\lambda\beta)\phi(D_s\beta)e^{2\pi im(\xi+\beta)}d\beta\Big\|_{L_r(L_\infty(\mathbb{Z})\overline{\otimes}\mathcal{N})}\nonumber\\
      &=\Big\|{\sup_{\lambda>0}}^+\sum_{\xi\in\mathfrak{R}_s\setminus\mathfrak{R}_{s-1}}\int_{\mathbb{T}} \hat{F_i}(\xi+\beta)\hat{k}(\lambda \beta)\phi(D_s\beta)e^{2\pi im(\xi+\beta)}d\beta\Big\|_{L_r(L_\infty(\mathbb{Z})\overline{\otimes}\mathcal{N})}\nonumber\\
       &\leq\sum_i\Big\|{\sup_{\lambda>0}}^+\sum_{\xi\in\Gamma_i}\int_{\mathbb{T}} \hat{F_i}(\xi+\beta)\hat{k}(\lambda \beta)\phi(D_s\beta)e^{2\pi im(\xi+\beta)}d\beta\Big\|_{L_r(L_\infty(\mathbb{Z})\overline{\otimes}\mathcal{N})}\nonumber\\&\leq C_r\sum_i\Big\|\sum_{\xi\in\Gamma_i}\int_{\mathbb{T}} S(\xi)\psi(D_s(\alpha-\xi))\hat{f}(\alpha)e^{2\pi im\alpha}d\alpha\Big\|_{L_r(L_\infty(\mathbb{Z})\overline{\otimes}\mathcal{N})}\nonumber\\& \leq C_r2^{s\rho}\|f\|_{L_r(L_\infty(\mathbb{Z})\overline{\otimes}\mathcal{N})}.
    \end{align}
    When $r=2$, we can employ the triangle inequality, inequality \eqref{last4}, the Parseval identity, Cauchy-Schwartz inequality and Lemma \ref{akeq4} to derive the following,
\begin{align}\label{last6}
 &\Big\|{\sup_{\lambda>0}}^+\sum_{\xi\in\mathfrak{R}_s\setminus\mathfrak{R}_{s-1}}\int_{\mathbb{T}} \hat{F_i}(\xi+\beta)\hat{k}(\lambda \beta)\phi(D_s\beta)e^{2\pi im(\xi+\beta)}d\beta\Big\|_{L_2(L_\infty(\mathbb{Z})\overline{\otimes}\mathcal{N})}\nonumber\\
 &\leq C\sum_i\Big\|\sum_{\xi\in\Gamma_i}\int_{\mathbb{T}} S(\xi)\psi(D_s(\alpha-\xi))\hat{f}(\alpha)e^{2\pi im\alpha}d\alpha\Big\|_{L_2(L_\infty(\mathbb{Z})\overline{\otimes}\mathcal{N})}\nonumber\\
 & \leq C \sum_i\Big\{\tau\Big(\int_{\mathbb{T}} |\sum_{\xi\in\Gamma_i}S(\xi)\psi(D_s(\alpha-\xi))|^2|\hat{f}(\alpha)|^2d\alpha\Big)\Big\}^{1/2}\nonumber\\
 &=C \sum_i\Big\{\tau\Big(\int_{\mathbb{T}}\sum_{\xi\in\Gamma_i}|S(\xi)\psi(D_s(\alpha-\xi))|^2|\hat{f}(\alpha)|^2d\alpha\Big)\Big\}^{1/2}\nonumber\\
 &\leq C2^{\frac{s\rho}{2}}\Big\{\tau\Big(\int_{\mathbb{T}}\sum_i\sum_{\xi\in\Gamma_i}|S(\xi)\psi(D_s(\alpha-\xi))|^2|\hat{f}(\alpha)|^2d\alpha\Big)\Big\}^{1/2}\nonumber\\
 &\leq C2^{\frac{s\rho}{2}}\cdot 2^{\frac{-s(1+\rho)}{2}}\Big\{\tau\Big(\int_{\mathbb{T}}|\hat{f}(\alpha)|^2d\alpha\Big)\Big\}^{1/2}=C2^{\frac{-s}{2}}\|f\|_{L_2(L_\infty(\mathbb{Z})\overline{\otimes}\mathcal{N})}.
 \end{align}
 The final inequality use the fact that $\psi(D_s(\cdot-\xi))$ for $\xi\in \mathfrak{R}_s\setminus\mathfrak{R}_{s-1}$ have disjoint supports. Now, for $1<r<2$ and $p\in (r,2)$, let $\frac{1}{p}=\frac{1-\theta}{2}+\frac{\theta}{r}$ for some $\theta\in(0,1)$. Interpolating between inequalities \eqref{last5} and \eqref{last6}, we obtain
 \begin{equation*}
   \Lambda_{p,s}\leq C_r2^{\frac{-s(1-\theta)}{2}+s\rho \theta}=C_r2^{\frac{-s(1-(2\rho+1)\theta)}{2}}=C_r2^{\frac{-s\big(1-(\frac{2}{p}-1)\frac{r(2\rho+1)}{2-r}\big)}{2}}.
 \end{equation*}
  If $p>\frac{2\rho+1}{\rho+1}$, we can find $r\in(1,2)$ such that
 \begin{equation*}
   p>\frac{2}{1+\frac{2-r}{r(2\rho+1)}},
 \end{equation*}
 which ensures that $\sum_{s=1}^\infty\Lambda_{p,s}<\infty.$
 \end{proof}
 \subsection{Proof of Theorem \ref{thm1}}
 As in the commutative setting \cite{Bo881, Bo882, Bo89}, Theorem \ref{thm1} can be simplified to the case of the shift operation on $\mathbb{Z}$. More precisely,
 for any $\mathcal{N}$-valued function $f$ defined on $\mathbb{Z}$ with finite support, we introduce the notation
\begin{equation}\label{ea2}
 \mathcal{A}_Nf:=f*K_N, \quad K_N=\frac{1}{N}\sum_{k=1}^N\delta_{\{k^t\}}.
\end{equation}
We claim that it suffices to show for $p>\frac{\sqrt{5}+1}{2}$, the following maximal inequality
\begin{equation}\label{thm11}
   \Big\|{\sup_{N\in\mathbb{N}}}^+\mathcal{A}_Nf\Big\|_{L_p(L_\infty(\mathbb{Z})\overline{\otimes}\mathcal{N})}\leq C_p\|f\|_{L_p(L_\infty(\mathbb{Z})\overline{\otimes}\mathcal{N})}
\end{equation}
holds for all $\mathcal S(\mathcal{N})$-valued function $f$ defined on $\mathbb{Z}$ with finite support.

 Now, let us demonstrate how inequality \eqref{thm11} can be leveraged to establish Theorem \ref{thm1}. Fix $p>\frac{\sqrt{5}+1}{2}$, and without loss of generality, assume that $x\in\mathcal{S}(\mathcal{N})_+$. We then have
 \begin{equation*}
   \Big\|{\sup_{N}}^+A_N(x)\Big\|_{L_p(\mathcal{N})}=\sup_{N}\Big\|{\sup_{1\leq k\leq N}}^+A_k(x)\Big\|_{L_p(\mathcal{N})}.
 \end{equation*}
To prove Theorem \ref{thm1}, it suffices to establish the inequality
\begin{equation}\label{shiftlast}
 \Big\|{\sup_{1\leq k\leq N}}^+A_k(x)\Big\|_{L_p(\mathcal{N})}\leq C_p\|x\|_{L_p(\mathcal{N})}.
\end{equation}
holds for any positive integer $N$.
Fix a positive integer $N$ and define a $\mathcal S(\mathcal{N})$-valued function on $\mathbb{Z}$ as
\begin{equation*}
  \phi(j):=\gamma^{j}(x), \qquad \text{for} \ j\in \mathbb{Z}.
\end{equation*}
Then one can see
\begin{equation*}
  (\mathcal{A}_N\phi)(j)=\frac{1}{N}\sum_{k=1}^N\phi(j+k^t)=\frac{1}{N}\sum_{k=1}^N\gamma^{j+k^t}(x)=A_N(\gamma^j(x)).
\end{equation*}
For a large positive integer $J$, we set $\psi_J(j):=\phi(j)\chi_{1\leq j\leq J}$, then for all $j\leq J-N^t$, we have
\begin{equation}\label{trans4}
  (\mathcal{A}_{N}\psi_J)(j)=A_{N}(\gamma^j(x)).
\end{equation}
Applying inequality \eqref{thm11} to $\psi_J$, we obtain
\begin{align}\label{trans1}
 \Big\|{\sup_{1\leq k\leq N}}^+\mathcal{A}_{k}\psi_J\Big\|_{L_p(L_\infty(\mathbb{Z})\overline{\otimes}\mathcal{N})}^p\lesssim\|\psi_J\|_{L_p(L_\infty(\mathbb{Z})\overline{\otimes}\mathcal{N})}^p
 =\sum_{j\in\mathbb{Z}}\|\psi_J(j)\|_{L_p(\mathcal{N})}^p
 =J\|x\|_{L_p(\mathcal{N})}^p,
\end{align}
where the final equality follows from the fact that $\gamma$ is an isometry in $L_p(\mathcal{N})$. Furthermore, as shown in the proof of \cite[ Theorem 3.1]{HLW}, we have
\begin{equation}\label{trans2}
  \sum_{j\in\mathbb{Z}}\Big\|{\sup_{1\leq k\leq N}}^+(\mathcal{A}_{k}\psi_J)(j)\Big\|_{L_p(\mathcal{N})}^p\leq \Big\|{\sup_{1\leq k\leq N}}^+\mathcal{A}_{k}\psi_J\Big\|_{L_p(L_\infty(\mathbb{Z})\overline{\otimes}\mathcal{N})}^p.
\end{equation}
Using \cite[Proposition 7.3]{HJX}, we obtain the following crucial inequality
\begin{equation}\label{trans3}
  \Big\|{\sup_{1\leq k\leq N}}^+A_k(x)\Big\|_{L_p(\mathcal{N})}\lesssim \Big\|{\sup_{1\leq k\leq N}}^+A_k(\gamma^jx)\Big\|_{L_p(\mathcal{N})}.
\end{equation}
By combining the previously established inequalities \eqref{trans4}, \eqref{trans1}, \eqref{trans2}, and \eqref{trans3}, we can derive the following estimate
\begin{align*}
  (J-N^t) \Big\|{\sup_{1\leq k\leq N}}^+A_k(x)\Big\|_{L_p(\mathcal{N})}^p&\lesssim\sum_{j=1}^{J-N^t} \Big\|{\sup_{1\leq k\leq N}}^+A_k(\gamma^jx)\Big\|_{L_p(\mathcal{N})}^p\\
  &=\sum_{j=1}^{J-N^t} \Big\|{\sup_{1\leq k\leq N}}^+(\mathcal{A}_{k}\psi_J)(j)\Big\|_{L_p(\mathcal{N})}^p\\
  &\lesssim J\|x\|_{L_p(\mathcal{N})}^p.
\end{align*}
From this, we can deduce that
\begin{equation*}
  \frac{J-N^t}{J}\Big\|{\sup_{1\leq k\leq N_0}}^+ {A}_k(x)\Big\|_{L_p(\mathcal{N})}^p\lesssim\|x\|_{L_p(\mathcal{N})}^p.
\end{equation*}
Since $J$ is an arbitrary positive integer, taking the limit as $J\rightarrow\infty$ yields
\begin{equation*}
  \Big\|{\sup_{1\leq k\leq N}}^+A_k(x)\Big\|_{L_p(\mathcal{N})}\lesssim\|x\|_{L_p(\mathcal{N})}.
\end{equation*}
This completes the proof of inequality \eqref{shiftlast}, thereby establishing Theorem \ref{thm1}.

In the following, we concentrate on establishing the proof of inequality \eqref{thm11}.
Assuming, without loss of generality, that $f$ is positive, we note that for $N_2>N_1$, one has
\begin{equation*}
 \mathcal{A}_{N_1}f= f*K_{N_1}\leq\frac{N_2}{N_1}(f*K_{N_2})=\frac{N_2}{N_1}\mathcal{A}_{N_2}f.
\end{equation*}
Consequently, it suffices to demonstrate
\begin{equation}\label{thm12}
   \Big\|{\sup_{k\in\mathbb{N}}}^+\mathcal{A}_{2^k}f\Big\|_{L_p(L_\infty(\mathbb{Z})\overline{\otimes}\mathcal{N})}\leq C_p\|f\|_{L_p(L_\infty(\mathbb{Z})\overline{\otimes}\mathcal{N})}.
\end{equation}
Applying the triangle inequality and noting the facts that for all $m\in\mathbb{N}$,
 \begin{equation*}
   \Big|\Re\Big(f*(K_{2^m}-L_{2^m})\Big)\Big|\leq \Big(\sum_{k=0}^\infty\Big| \Re\Big(f*(K_{2^k}-L_{2^k}\Big)\Big|^p\Big)^{1/p}
 \end{equation*}
 and
 \begin{equation*}
   \Big|\Im\Big(f*(K_{2^m}-L_{2^m})\Big)\Big|\leq \Big(\sum_{k=0}^\infty\Big|\Im\Big(f*(K_{2^k}-L_{2^k}\Big)\Big|^p\Big)^{1/p}.
 \end{equation*}
Then, we have
  \begin{align*}
          &\Big\|{\sup_{k\in\mathbb{N}}}^+f*K_{2^k}\Big\|_{L_p(L_\infty(\mathbb{Z})\overline{\otimes}\mathcal{N})}-\Big\|{\sup_{k\in\mathbb{N}}}^+f*L_{2^k}\Big\|_{L_p(L_\infty(\mathbb{Z})\overline{\otimes}\mathcal{N})}\leq \Big\|{\sup_{k\in\mathbb{N}}}^+f*(K_{2^k}-L_{2^k})\Big\|_{L_p(L_\infty(\mathbb{Z})\overline{\otimes}\mathcal{N})}\\
          &\leq \Big\|{\sup_{k\in\mathbb{N}}}^+\Re\Big(f*(K_{2^k}-L_{2^k})\Big)\Big\|_{L_p(L_\infty(\mathbb{Z})\overline{\otimes}\mathcal{N})}+\Big\|{\sup_{k\in\mathbb{N}}}^+\Im\Big(f*(K_{2^k}-L_{2^k})\Big)\Big\|_{L_p(L_\infty(\mathbb{Z})\overline{\otimes}\mathcal{N})}\\
           &\leq \Big(\sum_{k=0}^\infty\Big\|\Re\Big(f*(K_{2^k}-L_{2^k})\Big)\Big\|_{L_p(L_\infty(\mathbb{Z})\overline{\otimes}\mathcal{N})}^p\Big)^{1/p} +\Big(\sum_{k=0}^\infty\Big\|\Im\Big(f*(K_{2^k}-L_{2^k})\Big)\Big\|_{L_p(L_\infty(\mathbb{Z})\overline{\otimes}\mathcal{N})}^p\Big)^{1/p}\\
            &\lesssim\Big(\sum_{k=0}^\infty\Big\|f*(K_{2^k}-L_{2^k})\Big\|_{L_p(L_\infty(\mathbb{Z})\overline{\otimes}\mathcal{N})}^p\Big)^{1/p}.
          \end{align*}
Therefore, we deduce
 \begin{equation*}
   \Big\|{\sup_{k\in\mathbb{N}}}^+\mathcal{A}_{2^k}f\Big\|_{L_p(L_\infty(\mathbb{Z})\overline{\otimes}\mathcal{N})}\lesssim\Big(\sum_{k=0}^\infty\Big\|f*(K_{2^k}-L_{2^k})\Big\|_{L_p(L_\infty(\mathbb{Z})\overline{\otimes}\mathcal{N})}^p\Big)^{1/p}
   +\Big\|{\sup_{k\in\mathbb{N}}}^+f*L_{2^k}\Big\|_{L_p(L_\infty(\mathbb{Z})\overline{\otimes}\mathcal{N})}.
 \end{equation*}
Observing that
\begin{equation}\label{finaleq}
\min_{\rho\in(0,2)}\max\Big\{\frac{2\rho+1}{\rho+1}, \frac{3\rho+2}{2\rho+1}\Big\}=\frac{2\rho+1}{\rho+1}\Big|_{\rho=\frac{1+\sqrt{5}}{2}}=\frac{1+\sqrt{5}}{2},
\end{equation}
 we note that for $p\in(\frac{1+\sqrt{5}}{2},2]$, there exists a suitable $\rho\in(0,2)$ such that $p>\max\{\frac{2\rho+1}{\rho+1}, \frac{3\rho+2}{2\rho+1}\}$.
Finally, by invoking Lemma \ref{key1} and Lemma \ref{key2}, we arrive at the desired inequality \eqref{thm12}. Additionally, inequality \eqref{thm12} holds for $p=\infty$ trivially. Therefore, through interpolation, we deduce that inequality \eqref{thm12} holds for all $p>\frac{1+\sqrt{5}}{2}$.

\noindent {\bf Acknowledgements} \
G. Hong and L. Wang were supported by National Natural Science Foundation of China (No. 12071355, No. 12325105, No. 12031004).
\bibliographystyle{amsplain}

\end{document}